\documentclass[twoside, 11pt, leqno]{amsart}
% usepackage[latin1]{inputenc}
% \usepackage[T1]{fontenc}
 \usepackage{amsmath, amssymb, amsfonts, amsthm, amscd} \usepackage{mathrsfs}
  \usepackage[pdftex]{graphicx}

\long\def\symbolfootnote[#1]#2{\begingroup
\def\thefootnote{\fnsymbol{footnote}}\footnote[#1]{#2}\endgroup}

\pagestyle{plain}
\theoremstyle{plain}
\setcounter{page}{1}
\numberwithin{equation}{section}

% The hyphenation section
\hyphenation{hej-sa}

%Environment definitions
\newtheorem{theorem}{Theorem}[section]
\newtheorem{lemma}[theorem]{Lemma}
\newtheorem{corollary}[theorem]{Corollary}
\newtheorem{proposition}[theorem]{Proposition}
\newtheorem{definition}[theorem]{Definition}
\newtheorem{remark}[theorem]{Remark}

% The newcommand section
\newcommand{\fracsm}[2]{\begin{matrix}\frac{#1}{#2}\end{matrix}}

\newcommand{\beq}{\begin{equation}}
\newcommand{\eeq}{\end{equation}}
\newcommand{\ep}{\varepsilon}             % General stuff

\newcommand{\Reals}{\mathbb{R}}

\newcommand{\Id}{\mathrm{Id}}

%Math operators
\DeclareMathOperator{\Ric}{Ric}
\DeclareMathOperator{\Hess}{Hess}
\DeclareMathOperator{\dRham}{d}
\DeclareMathOperator{\trace}{tr}
\DeclareMathOperator{\diverg}{div}
\DeclareMathOperator{\grad}{\nabla}
\DeclareMathOperator{\Proj}{Proj_{\{x\cdot y=0\}}}

\begin{document}
\title{Mean curvature self-shrinkers of high genus: Non-compact examples}
\thanks{The results and methods in this paper have been presented at conferences in Princeton (February 2011), and at seminars at the Max Planck Institute in Golm (March 2011) and at MIT (May 2011). The first and second authors were supported partially by NSF grants DMS1105371 and  DMS1004646 , respectively.}

\author{N. Kapouleas}
\address{Nikolaos Kapouleas, Department of Mathematics, Brown University, Providence, RI 02912.}
\email{nicos@math.brown.edu}

\author{S. J. Kleene}
\address{Stephen James Kleene, MIT, Cambridge, MA 02139.}
\email{skleene@math.mit.edu}

\author{N. M. M\o{}ller}
\address{Niels Martin M\o{}ller, MIT, Cambridge, MA 02139.}
\email{moller@math.mit.edu}

\begin{abstract}
We give the first rigorous construction of complete, embedded self-shrinking hypersurfaces under mean curvature flow, since Angenent's torus in 1989. The surfaces  exist for any sufficiently large prescribed genus $g$, and are non-compact with one end. Each has $4g+4$ symmetries and comes from desingularizing the intersection of the plane and sphere through a great circle, a configuration with very high symmetry.

Each is at infinity asymptotic to the cone in $\Reals^3$ over a $2\pi/(g+1)$-periodic graph on an equator of the unit sphere $\mathbb{S}^2\subseteq \Reals^3$, with the shape of a periodically "wobbling sheet". This is a dramatic instability phenomenon, with changes of asymptotics that break much more symmetry than seen in minimal surface constructions.

The core of the proof is a detailed understanding of the linearized problem in a setting with severely unbounded geometry, leading to special PDEs of Ornstein-Uhlenbeck type with fast growth on coefficients of the gradient terms. This involves identifying new, adequate weighted H\"older spaces of asymptotically conical functions in which the operators invert, via a Liouville-type result with precise asymptotics.
\end{abstract}

%\today

\maketitle

\section{Introduction}
In studying the flow of a hypersurface by mean curvature in Euclidean $n$-space as well as in general ambient Riemannian $n$-manifolds $(M^n,g)$, $n\geq 3$, the basic ``atoms" of singularity theory are the self-similar surfaces in $\Reals^n$, viz. solitons moving by an ambient conformal Killing field, and of these the self-shrinkers are the most important. Taking center stage when identified by Huisken in 1988 (and the compact $H\geq0$ case classified: Round spheres; see \cite{Hu90}) as the surfaces for which equality holds in his celebrated monotonicity formula, the self-shrinkers arise as blow-up limits when assuming natural curvature bounds.

It is notable that even when $n=3$ only a few complete, embedded self-shrinking surfaces in $\Reals^3$ are to this date rigorously known: Flat planes, round cylinders, round spheres and a (not round-profile) torus of revolution discovered by Angenent in \cite{An} (this list exhausts the rotationally symmetric examples, although the uniqueness of the torus is still open; see \cite{KM}). Note also that several results involving self-shrinkers in some generality have appeared, most prominently a smooth compactness theorem (for closed, fixed genus surfaces \cite{CM1}) and a theory of generic singularities of Colding-Minicozzi, including classification of all $H\geq 0$ complete hypersurfaces (see \cite{CM2} and \cite{DX}). See also \cite{LS} and \cite{Wa} for other uniqueness results.

The self-shrinker equation is a nonlinear partial differential equation of mean curvature type, indeed the self-shrinkers are minimal with respect to a certain Gaussian metric on Euclidean space, and as such the current status of known examples can be likened to the situation before Scherk's, Riemann's and Enneper's minimal surface examples, when only rotationally symmetric surfaces were known. In recent years, several authors (\cite{Tr96}, \cite{Ka97}, \cite{Ka05}, \cite{Ka11}) have, via singular perturbation techniques, greatly expanded upon the list of rigorously known minimal surfaces in $\Reals^3$. Since the local considerations involved in the constructions would work in some generality (see \cite{Ka05} and \cite{Ka11}), it has long been expected that such constructions could work for self-similar surfaces under mean curvature flow, and indeed there are constructions for the self-translating case in the interesting work by X.H. Nguyen (see \cite{Ng1}-\cite{Ng2}).

The existence of self-shrinkers with the topology we consider in this paper was conjectured by Tom Ilmanen in 1995 (from numerics, using Brakke's surface evolver; see \cite{Il95}), while their asymptotic geometry was not clear at that point.

Our main theorem is the following:
\begin{theorem}\label{Thm_main}
For every large enough integer $g$ there exists a complete, embedded, orientable, smooth surface $\Sigma_g\subseteq \Reals^3$, with the properties:
\begin{itemize}
\item[(i)] $\Sigma_g$ is a mean curvature self-shrinker of genus $g$.
\item[(ii)] $\Sigma_g$ is invariant under the dihedral symmetry group with $4g+4$ elements.
\item[(iii)] $\Sigma_g$ has one non-compact end, and separates $\Reals^3$ into two connected components.
\item[(iv)] The end is outside some Euclidean ball a graph over a plane, asymptotic to the cone on a non-zero vertical smooth (4g+4)-symmetric graph over a great circle in $\mathbb{S}^2$ (hence the visual appearance of a "wobbling sheet").
\item[(v)] Inside any fixed ambient ball $B_{R}(0)\subseteq\Reals^3$, the sequence $\{\Sigma_g\}$ converges in Hausdorff sense to the union $\mathbb{S}^2\cup\mathcal{P}$, where $\mathcal{P}$ is a plane through the origin in $\Reals^3$. In fact, the bounds
\beq\label{Hausdorff}
d_H\big[\Sigma_g\cap B_{R}(0),(\mathbb{S}^2\cup \mathcal{P})\cap B_{R}(0)\big]\leq C \frac{R}{g},
\eeq
on the Hausdorff distance $d_H$ hold for some constant $C>0$. The convergence is furthermore locally smooth away from the intersection circle.
\end{itemize}
\end{theorem}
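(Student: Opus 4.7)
The plan is to produce $\Sigma_g$ by a singular perturbation construction in the spirit of \cite{Ka97,Ka05,Ka11}, starting from the singular configuration $\mathbb{S}^2\cup\mathcal{P}$ and desingularizing its intersection circle $C:=\mathbb{S}^2\cap\mathcal{P}$. The first step is to build an \emph{initial surface} $\Sigma_{\tau,g}$ by placing $g+1$ appropriately scaled copies of the singly periodic Scherk minimal surface, of neck-scale $\tau\asymp 1/g$, at equidistant nodes on $C$, smoothly matching outside each Scherk region to cut-off copies of $\mathbb{S}^2$ and of $\mathcal{P}$. The $2\pi/(g+1)$ spacing combined with the intrinsic mirror symmetries of the Scherk model enforces the required dihedral symmetry of order $4g+4$. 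On scale $\tau$ the drift $\tfrac12\langle x,\nu\rangle$ is a lower-order perturbation of $H$, so the Scherk pieces are genuinely minimal to leading order, and $\Sigma_{\tau,g}$ satisfies the shrinker equation modulo an error $E$ of small size in a suitable weighted norm.

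Writing the desired shrinker as a small normal graph $\Sigma_g=\mathrm{graph}_{\Sigma_{\tau,g}}(u)$ reduces the shrinker equation to the semilinear PDE
\[
L u + Q(u) = -E, \qquad L := \Delta - \tfrac12\langle x,\nabla\cdot\rangle + |A|^2 + \tfrac12,
\]
where $L$ is the Jacobi-type operator of $\Sigma_{\tau,g}$ and $Q$ is quadratic and lower order. The heart of the proof is then to invert $L$, modulo an explicit finite-dimensional obstruction, between well-chosen Banach spaces. Two difficulties need to be addressed. Near each Scherk neck, $L$ has an approximate kernel generated by the rigid motions and scalings of the Scherk model; I would kill these obstructions by letting the neck positions, tilt angles, and scales vary as free parameters and closing the system by a Schauder/implicit-function argument. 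On the non-compact planar end, however, the drift $-\tfrac12\langle x,\nabla\cdot\rangle$ grows unboundedly and dominates at infinity, so neither standard weighted Sobolev nor classical weighted H\"older theory applies.

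To handle the end, I would introduce weighted H\"older spaces of \emph{asymptotically conical} functions tailored to the Ornstein-Uhlenbeck-type operator on the plane. Decomposing into angular Fourier modes compatible with the dihedral symmetry and separating variables in polar coordinates $(r,\theta)$, each mode satisfies a second-order ODE in $r$ whose indicial analysis produces two asymptotic rates: one rapidly decaying and one growing linearly (``conical''). A Liouville-type rigidity theorem---asserting that any solution of $Lu=0$ on the end with at most conical growth is a prescribed explicit linear combination of such profiles---is the key input. Concretely, I would show that $L$ maps a weighted H\"older space of the form ``rapidly decaying'' $+$ ``conical correction'' onto a weighted H\"older space of rapidly decaying data, with bounded right inverse. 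Pasting this end parametrix with local inversion near the Scherk necks (exploiting non-degeneracy of the Scherk Jacobi operator modulo its explicit kernel, as in \cite{Ka97,Ka05,Ka11}) and ordinary Fredholm theory on the compact middle piece yields a bounded right inverse for $L$ modulo the geometric obstruction space.

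With the linear theory in hand, a Banach fixed-point argument solves $u = -L^{-1}(E+Q(u))$ after the finite-dimensional obstruction is killed by simultaneous adjustment of the geometric parameters. The resulting $\Sigma_g$ is automatically smooth and $(4g+4)$-symmetric; embeddedness, genus $g$, one non-compact end and the separation property (iii) all follow from the $C^{2,\alpha}$-smallness $\|u\|=O(\tau^\beta)$ together with the corresponding properties designed into $\Sigma_{\tau,g}$. The Hausdorff bound \eqref{Hausdorff} in (v), with constant of order $1/g$, reflects the $O(1/g)$ width of the gluing annulus around $C$, and smooth convergence away from $C$ is immediate from elliptic estimates. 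The asymptotic description in (iv) is read off the Liouville analysis: the surviving conical correction is precisely the leading harmonic compatible with the symmetry, and its non-vanishing is forced by a nontrivial ``flux'' across each Scherk neck which, unlike in the minimal surface setting, cannot cancel because the shrinker equation breaks translation invariance. I expect the most delicate step to be exactly this Liouville theorem with precise asymptotic coefficients in a regime where the drift dominates the Laplacian---identifying the sharp weighted function spaces so that $L$ is simultaneously semi-Fredholm \emph{and} the obstruction-killing mechanism actually controls the wobble amplitude.
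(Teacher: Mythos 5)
Your outline is correct and follows the same overall architecture as the paper: desingularize $\mathbb{S}^2\cup\mathcal{P}$ along the intersection circle with a curled-up Scherk surface at scale $\tau=1/(g+1)$, invert the linearized operator on the Scherk core modulo a finite-dimensional obstruction killed by geometric parameters, build a bespoke Liouville/Fredholm theory for the Ornstein--Uhlenbeck operator on the planar end in weighted H\"older spaces of the form ``cone plus decaying remainder,'' and close with a fixed-point argument. Three execution details differ from the paper and are worth noting. First, the paper does not vary neck positions and scales: the imposed dihedral symmetry freezes those, the cokernel on the Scherk piece is exactly one-dimensional (via the Gauss map it is the $x$-eigenfunction of $\Delta_{\mathbb{S}^2}+2$), and it is killed by a single tilt parameter $\theta$ realized by moving within an explicit family of rotationally symmetric caps $\mathcal{K}[\theta]$ that are \emph{exact} shrinkers meeting $\mathcal{P}$ at angle $\pi/2-\theta$; this keeps the error supported near the necks and makes the nondegeneracy computation ($\int w_0\,(\vec e_x\cdot\vec\nu)=8\pi$) clean. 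Second, your indicial description of the end is slightly off: the two homogeneous behaviors of $\mathcal{L}_{\mathcal{P}}$ at infinity are the conical one ($\sim c(\theta)r$) and a \emph{Gaussian-growing} one ($\sim e^{|x|^2/4}$), not a rapidly decaying one; the $O(|x|^{-1})$ remainder in the Liouville decomposition comes from the decay of the source term after excluding Gaussian growth (the paper gets it by heat-kernel Schauder estimates for the Hessian followed by explicit integration of the first-order radial equation $-\vec X\cdot\nabla u+u=w$, rather than by mode-by-mode indicial analysis). If you set up your spaces expecting a decaying indicial root you would choose the wrong barrier; the space must be $C^{2,\alpha}(\mathbb{S}^1)\times C^{2,\alpha}_{\mathrm{an}}(\Omega,|x|^{-1})$ with the anisotropic control on $\vec X\cdot\nabla f$, precisely so that $\mathcal{L}_{\mathcal{P}}$ is bounded on it. Third, the paper uses Schauder's fixed point theorem on a compact convex set (exploiting the compact embeddings $\mathcal{XS}^{2,\alpha}\hookrightarrow\mathcal{XS}^{2,\alpha'}$) because the solution pair $(v_E,\theta_E)$ is only shown to depend \emph{continuously} on the parameters; your Banach/contraction route would additionally require Lipschitz dependence of the linear solution operator, which is not established and would be extra work. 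None of these is a fatal gap, but each changes what you would actually have to prove.
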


\begin{corollary}
Euclidean flat cylinders over $\Sigma_g$ are shrinkers. So, in any fixed dimension $n\geq2$ we obtain self-shrinking hypersurfaces $\Sigma_g^{n}=\Sigma_g\times\Reals^{n-2}\subseteq\Reals^{n+1}$, with arbitrary large first Betti number
\[
b_1(\Sigma_g\times\Reals^{n-2})=b_1(\Sigma_g)=2g.
\]
\end{corollary}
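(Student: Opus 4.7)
The corollary follows formally from Theorem \ref{Thm_main}, so there is no substantive new obstacle; the content is a direct check that the self-shrinker equation $H=\tfrac12\langle x,\nu\rangle$ is preserved under taking Riemannian products with flat Euclidean factors, together with an elementary topological observation.

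For the shrinker equation, I would split $\Reals^{n+1}=\Reals^{3}\oplus\Reals^{n-2}$, fix a point $(p,y)\in\Sigma_g^{n}:=\Sigma_g\times\Reals^{n-2}$, and choose as unit normal $\tilde\nu(p,y)=(\nu(p),0)$, where $\nu(p)$ is a unit normal to $\Sigma_g$ at $p$. Since $\tilde\nu$ is $y$-independent with vanishing $\Reals^{n-2}$-component, the shape operator of $\Sigma_g^{n}$ is block-diagonal at $(p,y)$, with blocks $A_{\Sigma_g}(p)$ on $T_p\Sigma_g$ and $0$ on $T_y\Reals^{n-2}$; tracing yields $H_{\Sigma_g^{n}}(p,y)=H_{\Sigma_g}(p)$. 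The support function likewise reduces,
$$\langle(p,y),\tilde\nu(p,y)\rangle=\langle p,\nu(p)\rangle+\langle y,0\rangle=\langle p,\nu(p)\rangle,$$
so the self-shrinker equation for $\Sigma_g$ given by Theorem \ref{Thm_main}(i) transfers verbatim to $\Sigma_g^{n}$.

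For the Betti number statement, the projection $\Sigma_g\times\Reals^{n-2}\to\Sigma_g$ is a homotopy equivalence because $\Reals^{n-2}$ is contractible (equivalently, one invokes the K\"unneth formula), giving $b_1(\Sigma_g^{n})=b_1(\Sigma_g)$; the final equality $b_1(\Sigma_g)=g$ is part of the topological content of the $\Sigma_g$ produced by Theorem \ref{Thm_main}. Since $g$ can be taken arbitrarily large, the family $\{\Sigma_g^{n}\}$ furnishes self-shrinking hypersurfaces in $\Reals^{n+1}$ of unbounded first Betti number for every $n\geq 2$, completing the proof.
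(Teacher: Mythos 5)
Your proof is correct, and in fact the paper states this corollary with no proof at all, so you are simply supplying the routine verification the authors left implicit: the block-diagonal shape operator giving $H_{\Sigma_g\times\Reals^{n-2}}=H_{\Sigma_g}$, the vanishing of the $\Reals^{n-2}$-component in $\langle \vec{X},\vec{\nu}\rangle$, and homotopy invariance of $b_1$ under crossing with a contractible factor are exactly the right three observations, and each is carried out correctly. The only caveat is one you inherit from the statement itself rather than introduce: a genus-$g$ surface with one end is homotopy equivalent to a wedge of $2g$ circles, so one would normally write $b_1(\Sigma_g)=2g$ rather than $g$; this does not affect the substantive conclusion that the first Betti number is unbounded as $g\to\infty$.
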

\begin{figure}
\includegraphics[height = 300pt]{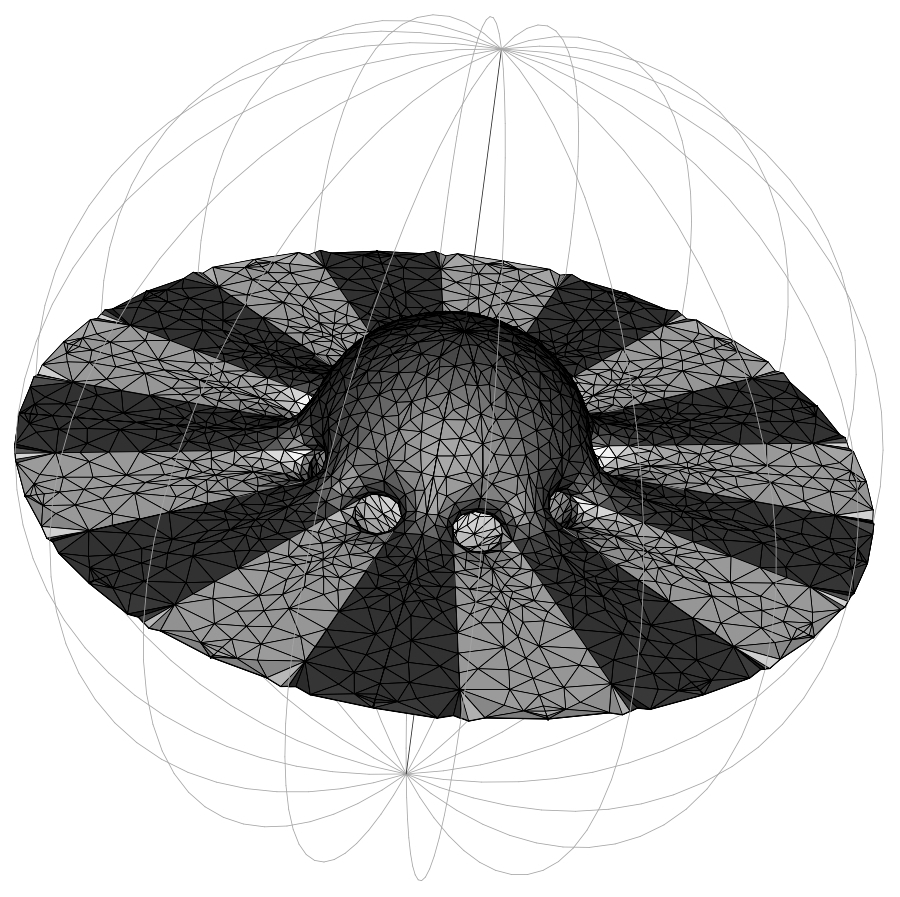}
\caption{Tom Ilmanen's conjectural shrinker of genus 8 with 9 Scherk handles.}
\end{figure}
The general approach of this article is the same as that of \cite{Ka97}, which follows the general methodology developed in  \cite{Ka95}. Our construction is analogous to a specific instance of the main theorem in \cite{Ka97},  the case of a catenoid  intersecting a plane through its waist, which is simpler than the general case because of the extra symmetry. On the other hand, we must contend with major analytic difficulties arising from the unbounded nature of the self-shrinker equation, which do not arise in minimal and constant mean curvature constructions.

To look further into the analytical difficulties faced here, it is instructive to use the mentioned characterization of self-shrinkers (which shrink towards the origin, with scaling factor $\sqrt{2(1-t)}$): Minimal surfaces $S\subseteq\Reals^3$ w.r.t. the conformal metric $g_{ij}=e^{-|x|^2/4}\delta_{ij}$, where $|x|$ is the distance to the origin and $\delta_{ij}$ is the Euclidean standard metric. All previous desingularization constructions -- and indeed much of geometric analysis -- rely on some kind of reasonably bounded geometry such as for example geodesic completeness, curvature bounds, or even stronger assumptions such as asymptotic flatness. We must however here face that the metric is geodesically incomplete (non-extendible: the distance to infinity is finite) and the Ricci curvature of a plane through the origin in the unit normal direction, respectively the Gauss curvature of the induced metric on such a plane, are (see Appendix C):
\begin{align*}
&\Ric(\vec{\nu},\vec{\nu})=e^{|x|^2/4}(1-|x|^2/16)\to-\infty,\quad\textrm{for}\quad |x|\to\infty,\\
&K_{(\Reals^2,g)}=\frac{1}{2}e^{|x|^2/4}\to+\infty,\quad\textrm{for}\quad |x|\to\infty.
\end{align*}

It should hence come as no surprise that the analysis we need to perform could not follow from any very general principle, and in fact this paper also gives the first successful example of a construction for such an unbounded geometry. Our new (anisotropically) weighted H\"o{}lder spaces and accompanying Liouville-type result and global Schauder-type estimates for the exterior linear problem of Ornstein-Uhlenbeck type, that are pivotal to the completion of the construction, arise from homogeneity properties of the linearized operator, which in turn lend their origin to the parabolic self-similar nature: It is the sum of homogeneous operators, with a homogeneity zero term which annihilates cones. We consider the problem of solving the equation for homogeneous functions and find good (sharp) choices for weighted H\"{o}lder norms, and then proceed for general functions with those very same spaces.

Note that the global Schauder estimates have no obvious extensions to general Laplace-type operators under the same growth rates on the coefficients, and there are counterexamples by Priola for a very similar equation (see \cite{Pr}).

It is fruitful to compare our construction with that of desingularizing, in the $H\equiv 0$ case, the intersection of a catenoid with a plane through the waist, leading to the Costa-Hoffmann-Meeks surfaces (of high genus). In that construction the plane remains flat, and one automatically gets improved power of decay of the constructed minimal surfaces back to the original plane, namely the decay rate is $1/|x|^{g+1}$ as $|x|\to\infty$. In our construction no such improvement shows up, the self-shrinkers constructed have regardless of $g$ the (likely sharp) asymptotics:
\beq\label{Asympt}
\sigma(\theta)|x|+O(|x|^{-1}),\quad |x|\to\infty.
\eeq
Another difference from the previously known constructions for minimal surfaces is that the surfaces we construct must be entropy unstable (since by \cite{CM2} the only stables ones are of the form $S^{n-k}\times\Reals^{k}, k=0,\ldots,n$), and this is another way of viewing some of the complications that arise here. However, it is from the desingularization viewpoint not presence of the instability per se that is the problem, it is the severe way in which it happens, witnessed by Equation (\ref{Asympt}): Imposing ever so much dihedral symmetry never renders it negligible.

Finally, we will mention that X.H. Nguyen via nonlinear parabolic methods has studied a related, truncated nonlinear exterior problem for the self-shrinker equation and obtained existence results (see \cite{Ng3}-\cite{Ng4}). Also, L. Wang has announced interesting existence and uniqueness result for exterior graphs with prescribed cones at infinity (see \cite{Wa}), which provides separate evidence of the dramatic change of asymptotics of the non-compact ends, i.e. that our examples are not asymptotic to planes.

After this work was completed, we learned of a preprint by X.H. Nguyen \cite{Ng5} which announces results very similar to ours.

\section{Overview of the paper}

The basic philosophy of the desingularization procedure is as follows: Consider the initial configuration of a plane intersecting a sphere through a great circle. For each $\tau$ with $\tau^{-1}=k\in\mathbb{N}$ a positive integer, define a one parameter family of surfaces $\mathcal{M}[\tau, \theta]$ that serve as approximate solutions to the self-shrinker equation. The surfaces $\mathcal{M}[\tau, \theta]$ are invariant under the action of the dihedral group with $4k$ elements, and under various normalizations converge either to the initial configuration or to Scherk's singly-periodic surface $\Sigma_0$ as the parameters $\tau$ and $\theta$ tend to zero.

On each of these surfaces, we consider graphs of small functions $u$, and produce via an incarnation of Newton's method, here Schauder's fixed point theorem, a pair $(\theta^*, u^*)$ such that the graph over $\mathcal{M}[\tau, \theta^*]$ by $u^*$ solves the self-shrinker equation exactly. Naturally, to apply the Schauder fixed point theorem, one needs to first understand the linearized equation on these surfaces, and to do this one needs to understand the linearized equation on the limits under both normalizations; that is to say on the initial configuration and on Scherk's surface. That is, we need to solve the equation $\mathcal{L} u = E$ on the initial surface $\mathcal{M}[\tau, \theta]$ with reasonable estimates, where $\mathcal{L}$ is the linearized operator for the self-shrinker equation (note that the study of this operator played an important role in \cite{CM1}-\cite{CM3}) and the function $E$ is the initial error in the self-shrinker equation on $\mathcal{M}[\tau, \theta]$.

On the pieces of the initial configuration (that is, the surfaces with boundary determined by the intersection circle), we prove that the linearized equation is always solvable with Dirichlet boundary conditions (and here we are, on the outer plane, forced to allow a dramatic change of asymptotics to include conical functions that are oscillatory in the angular variable). Near the intersection circle, the linearized equation turns out to be a perturbation of the stability operator on Scherk's surface.

The linearized equation on Scherk's surface is not solvable, with appropriate bounds on the norm of the inverse, in any bounded function space, in general, due to the persistence of a one-dimensional kernel spanned by a translational Killing field. But as long as the inhomogeneous term $E$ is ``orthogonal'' to this kernel,  we can solve the equation in a weighted H\"{o}lder space with exponential decay. The decay then allows a solution to be patched up globally to a solution on the entire initial surface. The role of the parameter $\theta$ in the surfaces $\mathcal{M}[\tau, \theta]$ is then to arrange for the initial error term $E$ to be orthogonal to 
the kernel. As $\theta$ changes, two of the pieces of $\mathcal{M}[\tau,\theta]$ move within a family of perturbed cap-shaped self-shrinkers near the round spherical caps. Note therefore that the role of the chosen $\theta^*$ in this problem is of a more technical nature (unlike for example the case of catenoidal ends for the $H\equiv0$ constructions in \cite{Ka97}, where it entails an important global change of asymptotics in itself).

The paper is structured as follows:

Section 3 sets notation and conventions for frequently used basic objects, while Section 4 discusses basic properties of the self-shrinker equation and its linearization.

In Section 5, the initial surfaces $\mathcal{M}[\tau, \theta]$ are introduced, and their basic properties -- smoothness in parameters, symmetries -- are established. 

Section 6 gives necessary estimates for the mean curvature of the desingularizing surfaces $\Sigma[\tau, \theta]$ and its variation under the $\theta$ parameter.

In Section 7, the linearized operator $\mathcal{L}$ on the curled up Scherk belt $\Sigma[\tau, \theta]$ is studied. We prove that the operator is invertible as a map between H\"{o}lder spaces with decay, modulo a one-dimensional cokernel, and we show that this cokernel can indeed be geometrically generated by varying the $\theta$ parameter.

In Section 8, we study the exterior Ornstein-Uhlenbeck problem and identify the correct weighted H\"{o}lder cone spaces which have all desired properties (such as a compact inclusion hierarchy), and in which we invert the linearized operator.

In Section 9, the patching up of solutions of the linear problem on the various pieces of the initial surfaces $\mathcal{M}[\tau, \theta]$ to a global solution is undertaken.

In Section 10, we verify the important fact that the nonlinear part of the problem closes up in the norms from Section 8, that is we prove the quadratic improvement required for Newton's method to be applicable.

Finally, in Section 11 we then complete the argument by setting up and carrying out the Schauder fixed point procedure. The Appendix at the end records various computations which were needed throughout.

\section{Notation and conventions}
Throughout $\mathbb{R}^3$ will denote Euclidean 3-space, $\vec{X}$ will denote a point in $\mathbb{R}^3$, $(x, y, z)$ the Cartesian coordinates of the point, and $\{ \vec{e}_x, \vec{e}_y, \vec{e}_z\}$ the associated standard basis, so that $\vec{X} = (x, y, z) = x \vec{e}_x + y \vec{e}_y + z \vec{e}_z$. We denote by $\mathcal{P}_{xy}$, $\mathcal{P}_{yz}$, and $\mathcal{P}_{xz}$ the $xy$-, $yz$-, and $xz$-coordinate planes respectively.

We adopt the convention in this article that for a surface $S$, all associated geometric objects and quantities will bear ``$S$'' as a subscript, with the exception of Scherk's singly-periodic surface $\Sigma_0$ and the surfaces $\Sigma[\tau, \theta]$ defined in Section \ref{initial_surfaces} . Objects associated with $\Sigma_0$ will at times simply bear the subscript ``$0$''. In most cases, the surfaces $\Sigma[\tau, \theta]$ will appear with the $\tau$ and $\theta$ arguments suppressed - so, for example, as simply $\Sigma$ - and their associated quantities will be identified without subscript. The reader should take care to distinguish subscripts from superscripts, as ``$0$'' will appear throughout the article as superscript as well.

We denote by $\vec{\nu}_S$ the Gauss map  of an oriented surface $S$. Given a function $f: S \rightarrow \mathbb{R}$ on a surface $S$, we use the shorthand $\{S : f \leq 0 \}$  to denote the set $\{p \in S : f(p) \leq 0 \} \subset S$, and likewise for ``$\geq$''. Note that under appropriate assumptions on $f$, $\{S : f \leq 0 \}$ is a smooth surface with smooth (possibly empty) boundary, and we view $\{S : f \leq 0 \}$ as inheriting all geometric quantities from $S$ -- i.e. first and second fundamental forms -- via the inclusion mapping. Also, for a function $f$, we denote by $S_f$ the normal graph of $f$ over $S$. Note that when $f$ and $S$ are class $C^{k, \alpha}$ and $f$ is sufficiently small, then $S_f$ is a $C^{k-1, \alpha}$ surface naturally parametrized by $S$.

Geometric objects defined on any  of the surfaces $\Sigma$ given in Section \ref{initial_surfaces} may be viewed as objects on $\Sigma_0$ via the map $\mathcal{Z}: \Sigma_0 \rightarrow \Sigma$.

We denote by $H^+$ the upper half plane $\{(s, z): s > 0\}$ and by $\mathcal{C}$ its quotient (a cylinder) under the action $z \mapsto z + 2 \pi$. Throughout this article, we fix a smooth, non-decreasing function $\psi_0: \mathbb{R} \rightarrow \mathbb{R}$ which vanishes on $(-\infty,1/3)$ and has $\psi_0\equiv 1$ on $(2/3,\infty)$. Also, we let $\psi[a,b]:\Reals\to[0,1]$ be
\[
\psi[a,b](s):=\psi_0\left(\fracsm{s-a}{b-a}\right),
\]
so that $\psi[a,b]$ transitions from $0$ at $a$ to $1$ at $b$.

We will for the compact pieces in our construction work in the usual weighted H\"o{}lder spaces $C^{k,\alpha}(S,g_S,f)$ on Riemannian surfaces $(S,g_S)$, defined by finiteness of the corresponding norms
\beq\label{NicosNorms}
\Big\|u: C^{k,\alpha}(S,g_S,f)\Big\|:=\sup_{x\in S}\frac{1}{f(x)}\Big\|u:C^{k,\alpha}(S\cap B(x),g_S)\Big\|,
\eeq
with weight function $f:S\to\Reals$, where $g_S$ is the metric for which the usual $C^{k,\alpha}$-norm is taken and $B(x)$ the geodesic ball of radius $1$ centered at $x$. When the metric is understood, we sometimes drop it from the notation writing $C^{k,\alpha}(S,f)=C^{k,\alpha}(S,g_S,f)$.

\section{The self-shrinker equation}
Recall that the PDE to be satisfied for a smooth oriented surface $S\subseteq\Reals^3$ to be a self-shrinker (shrinking towards the origin with singular time $T=1$) is
\beq\label{SSEq}
H_S(\vec{X})- \fracsm{1}{2} \vec{X} \cdot \vec{\nu}_S(\vec{X}) = 0,
\eeq
for each $\vec{X} \in S$, where by convention $H_S=\sum_1^n\kappa_i$ is the sum of the signed principal curvatures w.r.t. the chosen normal $\vec{\nu}_S$ (i.e. $H=2$ for the sphere with outward pointing $\vec{\nu}$). Such surfaces shrink by homothety towards the origin under flow by the (orientation-independent) mean curvature vector $\vec{H}=-H\vec{\nu}$, by the factor $\sqrt{2(1 - t)}$. In particular, we have normalized Equation (\ref{SSEq}) so that $T =  1$ is the singular time. 

The surface $\tilde{S}$ obtained by dilating  a self-shrinker $S$ about the origin by a factor of $\tau^{-1}$ satisfies the corresponding rescaled equation
\begin{equation} \label{SSEq_rescaled}
H_{\tilde{S}} (\vec{X}) - \fracsm{1}{2}\tau^2 \vec{X} \cdot \vec{\nu}_{\tilde{S}}(\vec{X}) = 0.
\end{equation} 
For a smooth  normal  variation $\vec{X}_t$ determined by a function $u$ via $X_t=X_0+tu\vec{\nu}_{\tilde{S}}$, where $\vec{X}_0$ parametrizes $\tilde{S}$, the pointwise linear change in (minus) the quantity on the left hands side in (\ref{SSEq_rescaled}) is given by the stability operator (see the Appendix, and also \cite{CM1}-\cite{CM2} for more properties of this operator)
\begin{equation} \label{SSEq_linearization}
\mathcal{L}_{\tilde{S}} u = \Delta_{\tilde{S}}  u+ |A_{\tilde{S}}|^2 u - \fracsm{1}{2} \tau^2\left(\vec{X}\cdot\nabla_{\tilde{S}} u - u  \right).
\end{equation}
Because at times we want to treat Equation (\ref{SSEq_rescaled}) as a perturbation of the mean curvature equation, we isolate the part of the linear change due to varying the mean curvature of $S$ and set
\begin{equation} \label{mc_linearization}
\mathcal{L}^0_{S} = \Delta_S + |A_S|^2.
\end{equation} 
Note that Equation (\ref{SSEq}) and its dilated version (\ref{SSEq_rescaled})  are invariant under the orthogonal group $\text{O}(3)$.

\section{The Initial Surfaces}\label{initial_surfaces}
In this section we describe in detail the construction of the initial surfaces $\mathcal{M}[\tau, \theta]$, depending on parameters $\tau$ and $\theta$ which we assume satisfy
\[
0<\tau\leq\delta_\tau,\quad|\theta|\leq\delta_\theta,
\]
throughout for appropriate constants that will later be chosen. The surfaces are approximate solutions to Equation (\ref{SSEq}), and by means of a fixed point argument we will for each small enough $\tau$ produce a function on them (for appropriately chosen $\theta$) whose graph satisfies Equation (\ref{SSEq}) exactly. The basic ingredients  are the singly periodic Scherk's singly-periodic surface $\Sigma_0$ and a family of half surfaces $\mathcal{K}[\theta]$ that are rotationally symmetric (about the $y$-axis) perturbations of the round hemisphere of radius $2$. The crucial properties of the half-surfaces $\mathcal{K}[\theta]$ are that they satisfy Equation (\ref{SSEq}) exactly, intersect the plane $\mathcal{P} = \mathcal{P}_{xz}$ at the angle $\pi/2 - \theta$ and when $\theta$ vanishes agree with the hemisphere $\mathbb{S}^2(2)\cap\{ y \geq 0 \}$.

Let $\mathcal{C}[\theta]$ denote the configuration consisting of the plane $\mathcal{P}$ together with $\mathcal{K}[\theta]$ and a copy of $\mathcal{K}[\theta]$ reflected through  $\mathcal{P}$ and let $c[\theta]$ denote their circle of intersection. For each $\tau$ with $\tau^{-1}$ an integer, the surfaces $\mathcal{M}[\tau, \theta]$ outside of a neighborhood  of $c[\theta]$ of uniformly fixed radius will agree with $\mathcal{C}[\theta]$. Inside this neighborhood they will consist, loosely speaking, of $\tau^{-1}$ fundamental domains of $\Sigma_0$, rescaled by a factor of $\tau$ that have been  ``curled'' and appropriately smoothed out to replace the singular intersection circle in the configuration. The analysis is simplified by identifying the symmetries preserved by this procedure and then imposing these from the beginning.

\begin{definition}
Let $G_\tau$ be the subgroup generated by $\omega_\tau, \xi_\tau \in \text{O}(3)$, where:
\begin{itemize}
\item[(1)] $\omega_\tau$ is the rotation about the $y$-axis by a positive angle $\pi \tau$ followed by the reflection $y \mapsto - y$.
\item[(2)] $\xi_\tau$, is the reflection through a plane $\mathcal{P}_\tau$, which is $\{z = 0\}$ rotated an angle of $(\pi/2)\tau$ around the $y$-axis.
\end{itemize}
Denote also by $\sigma_\tau=\omega_\tau^2$ the rotation about the $y$-axis by a positive angle $2 \pi \tau$.
\end{definition}

We will construct the surfaces $\mathcal{M}[\tau, \theta]$ so that they are invariant under $G_\tau$, with $\sigma_\tau$ orientation preserving and $\omega_\tau$ orientation reversing. We assume implicitly that $\tau^{-1}$ is a positive integer. These symmetries will be reflected in the analysis by working with functions on $\mathcal{M}[\tau, \theta]$ that are invariant under $\sigma_\tau$ and $\xi$ and anti-invariant under $\omega_\tau$. As the parameter $\tau\to 0$, the surfaces $\mathcal{M}[\tau, \theta]$ converge, under an appropriate renormalization,  to a surface $\Sigma [\theta]$, singly periodic  in the direction of the $z$-axis and invariant under the action of a group $G_0$, as follows:

\begin{definition}
Let $G_0$ be the group generated by the Euclidean isometries $\omega_0$ and $\xi_0$, where:
\begin{itemize}
\item[(1)] $\omega_0$ is the translation $z \mapsto z + \pi$ followed by the reflection $y \mapsto -y$.
\item[(2)] $\xi_0$ is the reflection through the plane $\{z = \pi/ 2 \}$.
\end{itemize}
Denote also by $\sigma_0=\omega_0^2$ the translation $z \mapsto z + 2 \pi$.
\end{definition}
The geometrically correct notion of symmetric functions is as in the next definition, the point being to ensure that normal graphs (using the fixed unit normal giving the orientation) over the symmetric surface inherit the symmetries.
\begin{definition}
Let $S$ be an oriented surface invariant under $G_\tau$ (resp. $G_0$). By the $G_\tau$-equivariant (resp. $G_0$-invariant) functions we will mean all $f:S\to\Reals$ such that
\[
\beta^*f=\langle\vec{\nu}_S,\beta\vec{\nu}_S\rangle f,\quad\forall\beta\in G_\tau\quad\textrm{(resp. $G_0$)}.
\]
\end{definition}

Now, recall Scherk's minimal surface $\Sigma_0$ (cf. \cite{Ka97} p. 101--106)  with angle $\fracsm{\pi}{2}$ between the asymptotic planes:
\begin{equation}\label{scherk_equation}
\Sigma_0 = \{(x,y,z)\subseteq\Reals^3| \sinh x\sinh y -\sin z= 0\}.
\end{equation}
In addition to $G_0$, the isometries of $\Sigma_0$ include reflection in the planes $\{x = y\}$ and $\{x = -y\}$. The regions $\Sigma_0 \cap \{ \pm x > 0 \}$ and $\Sigma_0 \cap \{\pm y  > 0 \}$ are graphs over  $\mathcal{P}_{xz}$ and $\mathcal{P}_{yz}$ respectively, and the symmetries of $\Sigma_0$ give that it is globally determined by the graph of a single function
\begin{equation}
f: H^+ \rightarrow \mathbb{R}.
\end{equation}
where $H^+ =  \{(s, z)| s > 0 \}$. That is, in the half space  $I = \{ (x, y, z) | x > 0 \}$) we have
\begin{equation}\notag
\Sigma_0 \cap I =  \{ (x, f(x,z), z) \}
\end{equation}
 with function $f (s,z)$ satisfying the estimate
\beq\label{f_estimate}
\|f:C^5(\{H^+: s \geq 1 \},e^{- s})\|\leq C.
\eeq
A simple rephrasing of this estimate is as follows: Let  $\Proj: \mathbb{R}^3 \rightarrow \{x\cdot y =0\}=\mathcal{P}_{xz} \cup \mathcal{P}_{yz}$ denote the nearest point projection to this closed set. Then $\Proj$  is well defined away from the planes $\{x = \pm y\}$ and  its restriction to $\Sigma_0$ satisfies the estimate $\|\Proj^{-1} - \Id\|: C^5(\{H^+ : s \geq 1\}, e^{-s}) \|\leq C$.  On $\Sigma_0$ we define the function $s$ by
\begin{equation} \label{the_function_s}
s((x, y, z)) = \max \{ |x|, |y| \}.
\end{equation}

Note that since $\Sigma_0$ is minimal, $\Sigma_0 / \langle\sigma_0\rangle$ is conformal under the Gauss map $\vec{\nu}_{\Sigma_0}$ with conformal factor $\frac{1}{2} |A_{\Sigma_0}|^2$ to the punctured sphere $\{ S^2 : x \geq 0\} \setminus \{(\pm 1, 0, 0), (0, \pm 1, 0) \}$.

Let $ \omega_0^*$ and $\xi_0^*$ denote the Euclidean isometries given by $(x, y, z) \mapsto (-x, y, -z)$ and $(x, y, z) \mapsto (x, y, - z)$, respectively. By computing the gradient of the function defining $\Sigma_0$ we obtain  the intertwining relations
 \begin{eqnarray} \label{sphere_identities}
 \vec{\nu}_{\Sigma_0} \circ \omega_0 (\vec{X}) & = & \omega_0^* \circ \vec{\nu}_{\Sigma_0}(\vec{X}), \\ \notag
 \vec{\nu}_{\Sigma_0} \circ \xi_0 (\vec{X}) & = & \xi_0 ^*\circ \vec{\nu}_{\Sigma_0}(\vec{X}).
 \end{eqnarray}
 
Thus, functions on $\Sigma_0$ that are invariant under $\xi_0$ and anti-invariant under $\omega_0$ (i.e. $G_0$-equivariant) push forward under the Gauss map to functions that are invariant under $\xi_0^*$ and anti-invariant with respect to the inversion $\omega_0^*$. Since the Gauss map  will be the fundamental tool in understanding the linear operator $\mathcal{L}_{\Sigma_0}$ on $\Sigma_0$ we record the following lemma. 

 \begin{lemma} \label{sphere_kernel}
 The kernel of the operator $\Delta_{S^2} + 2$ on the unit sphere in the space of $L^2$-functions that are invariant under $\xi_0^*$ and anti-invariant under $\omega_0^*$ is one-dimensional, spanned by the ambient coordinate function $x$.
 \end{lemma}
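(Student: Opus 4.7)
The plan is to first identify the full kernel of $\Delta_{S^2}+2$ on $L^2(S^2)$, and then cut it down using the two symmetry conditions. The operator $\Delta_{S^2}+2$ has spectrum determined by the spherical harmonic decomposition: its eigenvalues are $2-\ell(\ell+1)$ for $\ell=0,1,2,\dots$, and the only vanishing one occurs at $\ell=1$. Hence the full kernel is the three-dimensional space of degree-one spherical harmonics, namely the restrictions to $S^2$ of the linear coordinate functions $x,y,z$. So the statement reduces to a linear-algebra check on this three-dimensional subspace.

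Next I would compute the action of $\omega_0^*:(x,y,z)\mapsto(-x,y,-z)$ and $\xi_0^*:(x,y,z)\mapsto(x,y,-z)$ on the basis $\{x,y,z\}$ of this kernel by pullback. Under $\xi_0^*$, the coordinate functions pull back to $x$, $y$, and $-z$ respectively; so the $\xi_0^*$-invariant subspace is exactly the two-dimensional span of $\{x,y\}$, eliminating $z$. Under $\omega_0^*$, the coordinate functions pull back to $-x$, $y$, and $-z$; requiring anti-invariance under $\omega_0^*$ in the remaining two-dimensional space forces the $y$-component to vanish, leaving only the one-dimensional span of $x$.

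Combining the two symmetry conditions cuts the three-dimensional kernel down first to two dimensions and then to one, spanned by the ambient coordinate function $x$. Since every $L^2$ function in the kernel of $\Delta_{S^2}+2$ already lies in this finite-dimensional space, no further analytic input is needed beyond standard elliptic regularity on the closed manifold $S^2$, which ensures that any $L^2$ solution is smooth and hence a genuine spherical harmonic.

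The argument is essentially combinatorial once the spectral identification is in place, so there is no substantial obstacle; the main point is merely to verify that the sign patterns of $\omega_0^*$ and $\xi_0^*$ indeed select out precisely the single generator $x$, which matches the geometric expectation that this unique degree-one direction corresponds to the $x$-translation Killing field responsible for the persistent cokernel on Scherk's surface $\Sigma_0$.
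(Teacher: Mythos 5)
Your proof is correct and is exactly the standard argument the paper has in mind (it states Lemma \ref{sphere_kernel} without proof): the kernel of $\Delta_{S^2}+2$ is the $\ell=1$ spherical-harmonic space $\mathrm{span}\{x,y,z\}$, and since $\xi_0^*$ and $\omega_0^*$ are isometries commuting with the Laplacian, the symmetric kernel is just the intersection of this three-dimensional space with the invariance/anti-invariance conditions. Your sign bookkeeping ($z$ killed by $\xi_0^*$-invariance, $y$ killed by $\omega_0^*$-anti-invariance, $x$ surviving) is accurate, and the elliptic-regularity remark correctly disposes of the $L^2$ hypothesis.
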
  
\begin{proposition} \label{shrinker_caps}
 For $|\theta|\leq\delta_\theta$ with $\delta_\theta$ sufficiently small, there is a smooth one parameter family of surfaces $\mathcal{K}[\theta]$,  with the following properties:
\begin{itemize}
\item[(0)] Each $\mathcal{K}[\theta]$ satisfies Equation (\ref{SSEq}).
\item[(1)] $\mathcal{K}[0]$ is the upper hemisphere of radius $2$ and the surfaces $\mathcal{K}[\theta]$ are given as normal graphs over $\mathcal{K}[0]$.
\item[(2)] The surfaces $\mathcal{K}[\theta]$ are invariant with respect to rotations about the $y$-axis. 
\item[(3)] The boundary $\partial \mathcal{K}[\theta]$ is a circle in the plane $\mathcal{P}_{xz}$ of radius $r[\theta]$, and the inward pointing co-normal $\eta_\theta$ to $ \partial \mathcal{K}[\theta]$ at the $x$-axis satisfies 
\begin{equation} \notag
\eta_\theta \cdot \vec{e}_x = \sin (\theta).
\end{equation}
\item[(4)] There are conformal parametrizations
\begin{equation} \notag
\kappa[\theta]: \mathcal{C} \mapsto \mathcal{K}[\theta] \setminus \{ y \text{-axis} \}
\end{equation}
of the surfaces $\mathcal{K}[\theta]$, where $\mathcal{C} = H^+ / \{z \mapsto z + 2\pi \}$ is the flat cylinder of radius $1$ such that:
\begin{itemize}
\item[(i)] $ \kappa[\theta] (\{ (s, z): s = \text{const.} \}))$ is a circle with center on the $y$-axis parallel to the $xz$ plane. 
\item[(ii)] $\kappa[\theta](\{s = 0 \}) = \partial \mathcal{K}[\theta] $.
\item[(iii)]  The conformal factor is $ \varrho^2_{\kappa[\theta]} (s, z) = x^2 (s, z) + z^2(s, z)$.
\item[(iv)] There are bounds 
\begin{equation} \label{kappa_bounds}
|\nabla^{k} \kappa[\theta]|, |\nabla^{k} \dot{\kappa}[\theta]| \leq C(k)
\end{equation}
where ``$\cdot$'' denotes derivation in the $\theta$ parameter.
\end{itemize}
\end{itemize}
\end{proposition}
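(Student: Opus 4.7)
The plan is to exploit rotational symmetry to reduce the construction to an ODE, then apply the implicit function theorem around the known hemisphere solution. By item (2), each $\mathcal{K}[\theta]$ is a surface of revolution about the $y$-axis, determined by its profile curve; I parametrize this profile by arc length as $(r(\sigma), y(\sigma))$ with apex conditions $r(0)=0$, $y(0)=y_0$, $r'(0)=1$, $y'(0)=0$ for smoothness at the pole on the $y$-axis. The principal curvatures and $\vec{X}\cdot\vec{\nu}$ are explicit functions of $(r,y,r',y')$, so Equation (\ref{SSEq}) becomes a second-order autonomous ODE for the profile, regular away from $\sigma=0$ with a removable singularity at the apex resolved by the initial data. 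Let $\sigma_1(y_0)>0$ denote the first zero of $y$; the intersection-angle condition in item (3) then amounts to the algebraic equation $\Phi(y_0) := r'(\sigma_1(y_0)) = \sin\theta$, where $\Phi$ is the shooting map. For the base case $y_0=2$, the quarter circle $(2\sin(\sigma/2), 2\cos(\sigma/2))$ solves the ODE (reflecting that $\mathbb{S}^2(2)$ is a self-shrinker), giving $\sigma_1(2)=\pi$, $\Phi(2)=0$, and recovering $\mathcal{K}[0]$ upon rotation.

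To close the IFT on $\Phi(y_0) = \sin\theta$ near $\theta = 0$, I need $\Phi'(2) \neq 0$. On $\mathbb{S}^2(2)$ one has $|A|^2 = 1/2$ and $\vec{X}\cdot\nabla u = 0$ (since $\vec{X}$ is normal), so (\ref{SSEq_linearization}) collapses to $\mathcal{L} = \Delta_{\mathbb{S}^2(2)} + 1$. Smooth ODE-dependence on initial data shows that $\Phi'(2)$ is proportional to $u_\phi(\pi/2)$, where $u$ is the unique axisymmetric solution of $\mathcal{L}u=0$ with apex Cauchy data $u(0)=1$ (the derivative $u_\phi(0)=0$ being forced by axisymmetric regularity: smoothness at the pole gives $u(\phi) = f(\cos\phi)$ with $f$ smooth). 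Were $u_\phi(\pi/2)=0$, then $u$ would be an axisymmetric Neumann eigenfunction of $-\Delta_{\mathbb{S}^2(2)}$ on the closed hemisphere with eigenvalue $1$. But the axisymmetric Neumann spectrum is exactly $\{l(l+1)/4 : l = 0, 2, 4, \ldots\} = \{0, 3/2, 5, 21/2, \ldots\}$---only the even Legendre polynomials $P_l(\cos\phi)$ satisfy the Neumann condition $P_l'(0) = 0$ at the equator---which does not contain $1$. Hence $\Phi'(2) \neq 0$, the IFT yields a smooth $y_0(\theta)$ for $|\theta| \leq \delta_\theta$, and $\mathcal{K}[\theta]$ is defined as the surface of revolution generated by the corresponding profile.

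Properties (0)--(3) are immediate from the construction, with $r[\theta] = r(\sigma_1(y_0(\theta)))$ and the co-normal condition equivalent to $\Phi(y_0(\theta)) = \sin\theta$. For item (4), I use the standard isothermal parametrization of surfaces of revolution: introducing $s$ via $ds = d\sigma/r(\sigma)$ converts the induced metric $d\sigma^2 + r^2 dz^2$ into $r^2(ds^2 + dz^2)$ on the cylinder $\mathcal{C}$, and setting $\kappa[\theta](s,z) := (r(s)\cos z, y(s), r(s)\sin z)$ (with $r,y$ now parametrized by $s$) yields the desired conformal diffeomorphism onto $\mathcal{K}[\theta] \setminus \{y\text{-axis}\}$ with conformal factor $r^2(s) = x^2(s,z) + z^2(s,z)$, establishing (i)--(iii). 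The uniform bounds (iv) follow from smooth and $\theta$-uniform dependence of ODE solutions (and their $\theta$-derivatives) on initial data, applied on the relevant $\sigma$-range. The only non-routine point in the whole argument is the spectral non-degeneracy $\Phi'(2) \neq 0$, which is clean because $1$ happens to miss the hemisphere's axisymmetric Neumann spectrum $\{0, 3/2, 5, \ldots\}$; had it not, one would have needed either an extra symmetry reduction or a different placement of the boundary circle to kill the kernel.
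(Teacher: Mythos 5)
Your proof is correct and follows essentially the same route as the paper's: reduce by rotational symmetry to a profile ODE, shoot from the apex on the axis, and verify non-degeneracy of the shooting map at the hemisphere via the linearized (Legendre) equation, then pass to isothermal coordinates for the conformal parametrization. Your observation that $1$ misses the axisymmetric Neumann spectrum $\{l(l+1)/4 : l \text{ even}\}$ is precisely the paper's statement that $w_1'(\pi/2) = -P_l'(0) \neq 0$ for $l = (\sqrt{17}-1)/2$, since $P_\nu'(0)$ vanishes exactly when $\nu$ is an even integer.
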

\begin{proof}
See Appendix.
\end{proof}

\begin{definition}
We denote by $\mathcal{K}[\tau, \theta]$ the surface $\mathcal{K}[\theta]$ dilated by the factor $\tau^{-1}$ and $\kappa[\tau, \theta]: H^+ \rightarrow \mathcal{K}[\tau, \theta]$ the map given by 
\begin{equation} \notag
\kappa[\tau, \theta] (s, z) = \tau^{-1} \kappa[\theta](\tau s, \tau z).
\end{equation}  
\end{definition}

\begin{definition} \label{bending_maps}
Let $\psi = \psi[1/2, 1]$. Then define the maps $\mathcal{B}[\tau, \theta]: \mathbb{R}^3 \rightarrow \mathbb{R}^3$ and $\mathcal{Z}[\tau, \theta]:  \mathbb{R}^3 \rightarrow \mathbb{R}^3$ by
\begin{equation} \notag
\mathcal{B}[\tau, \theta] (x, y, z) =r[\theta] \tau^{-1} e^{\tau x}( \cos \tau z,  0,  \sin \tau z) + r[\theta] y \vec{e}_y,
\end{equation}
and
\begin{equation} \notag
\mathcal{Z}[\tau, \theta] (x, y, z) =\psi(y) ( \kappa[\tau, \theta] (y, z) +r[\theta] x \vec{\nu}_{\kappa[\tau, \theta]} (y,z) ) + (1 - \psi(y))\mathcal{ B}[\tau, \theta] (x, y, z).
\end{equation}
\end{definition}

\begin{proposition} \label{bending_map_properties}
The maps $\mathcal{Z}[\tau, \theta]$ have the following properties:
\begin{itemize} 
\item[(1)] They depend smoothly on the parameters $\tau$ and $\theta$  with bounds
\begin{equation} \notag
\left| \nabla^k \mathcal{Z} [\tau, \theta] \right|, \left|\nabla^k \dot{\mathcal{Z}} [\tau, \theta] \right| \leq C\tau^{k - 1}, \quad k>1.
\end{equation}
\item[(2)]  We have that
\begin{equation} \notag
\mathcal{Z}[ \theta] : = \lim_{\tau \rightarrow 0} \mathcal{Z}[\tau, \theta]  - \tau^{-1 } r[\theta ]\vec{e}_x = r[\theta](\psi R_\theta + (1 -\psi)\Id)
\end{equation}
where $R_\theta \in \text{SO}(3)$ is the rotation determined by 
\begin{eqnarray} \notag
\vec{e}_x & \mapsto &  \cos \theta \vec{e}_x - \sin \theta \vec{e}_y \\ \notag
\vec{e}_z & \mapsto & \vec{e}_z \\ \notag
\vec{e}_y & \mapsto & \cos \theta \vec{e}_y + \sin \theta \vec{e}_x, \notag
\end{eqnarray}
 In particular $\mathcal{Z}[0]$ is globally the identity transformation. 
\end{itemize}
\end{proposition}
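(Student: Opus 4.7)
The plan is a direct computation, splitting $\mathcal{Z}[\tau,\theta]$ into its two building blocks $\mathcal{B}[\tau,\theta]$ and $\kappa[\tau,\theta](y,z)+r[\theta]x\vec{\nu}_{\kappa[\tau,\theta]}(y,z)$, and exploiting the scaling identity $\kappa[\tau,\theta](s,z)=\tau^{-1}\kappa[\theta](\tau s,\tau z)$ together with the bounds from Proposition \ref{shrinker_caps}. The key observation that makes both parts straightforward is that the subtraction $\tau^{-1}r[\theta]\vec{e}_x$ distributes across the partition of unity $\psi(y)+(1-\psi(y))=1$, so each building block can be analyzed in isolation.

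For part (1), the map $\mathcal{B}[\tau,\theta]$ is explicit and every derivative in $x$ or $z$ of $r[\theta]\tau^{-1}e^{\tau x}(\cos\tau z,0,\sin\tau z)$ extracts one factor of $\tau$, giving $|\nabla^k\mathcal{B}[\tau,\theta]|\leq C\tau^{k-1}$ for $k\geq 1$. For the cap piece, the scaling identity together with the bound $|\nabla^k\kappa[\theta]|\leq C(k)$ from Proposition \ref{shrinker_caps}(iv) yields $|\nabla^k\kappa[\tau,\theta]|\leq C\tau^{k-1}$; since the Gauss map is invariant under dilation, $|\nabla^k\vec{\nu}_{\kappa[\tau,\theta]}|\leq C\tau^{k}$. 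The Leibniz rule with the fixed cutoff $\psi(y)$ then produces the claimed bound on $\mathcal{Z}[\tau,\theta]$, and the identical bound on $\dot{\mathcal{Z}}[\tau,\theta]$ follows from the smooth $\theta$-dependence of $r[\theta]$, $\kappa[\theta]$, and $\vec{\nu}_{\kappa[\theta]}$ guaranteed by the same proposition. For part (2), on $\{\psi(y)=0\}$ Taylor expanding around $\tau=0$ gives $e^{\tau x}\cos\tau z-1=\tau x+O(\tau^2)$ and $e^{\tau x}\sin\tau z=\tau z+O(\tau^2)$, so $\mathcal{B}[\tau,\theta](x,y,z)-\tau^{-1}r[\theta]\vec{e}_x\to r[\theta](x,y,z)=r[\theta]\Id$. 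On $\{\psi(y)=1\}$, items (4)(i)--(iii) combined with the radius condition (3) force the boundary parametrization $\kappa[\theta](0,z)=r[\theta](\cos z,0,\sin z)$, while the co-normal condition (3) together with conformality (iii) gives $\partial_s\kappa[\theta](0,0)=r[\theta](\sin\theta,\cos\theta,0)$ and $\partial_z\kappa[\theta](0,0)=r[\theta]\vec{e}_z$. A first-order Taylor expansion therefore produces
\[
\kappa[\tau,\theta](y,z)-\tau^{-1}r[\theta]\vec{e}_x=r[\theta](y\sin\theta,y\cos\theta,z)+O(\tau),
\]
and normalizing $\partial_s\kappa[\theta](0,0)\times\partial_z\kappa[\theta](0,0)$ yields the outward unit normal $\vec{\nu}_{\kappa[\theta]}(0,0)=(\cos\theta,-\sin\theta,0)$, whose contribution has limit $r[\theta]x(\cos\theta,-\sin\theta,0)$. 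Summing the two contributions produces precisely $r[\theta]R_\theta(x,y,z)$, and patching with the cutoff yields the claimed formula; at $\theta=0$ one has $R_\theta=\Id$, so the cutoff becomes inoperative and $\mathcal{Z}[0]$ reduces to a single global transformation.

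I expect no real analytic obstacle: the calculation is essentially mechanical, and all smoothness and bounds come directly from Proposition \ref{shrinker_caps}. The only non-routine element is correctly assembling the boundary frame $\{\partial_z\kappa[\theta](0,0),\partial_s\kappa[\theta](0,0),\vec{\nu}_{\kappa[\theta]}(0,0)\}$ so that the three first-order Taylor contributions become the three columns of the matrix of $R_\theta$; this geometric bookkeeping, together with tracking the cancellation of the divergent term $\tau^{-1}r[\theta]\vec{e}_x$ from both blocks, is the main step requiring care.
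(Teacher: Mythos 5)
Your proposal is correct and follows essentially the same route as the paper, whose entire proof reads that (1) follows directly from the bounds (\ref{kappa_bounds}) and that (2) follows by l'H\^{o}pital's rule --- your first-order Taylor expansions are exactly that computation, and your identification of the boundary frame $\{\partial_s\kappa[\theta](0,0),\partial_z\kappa[\theta](0,0),\vec{\nu}_{\kappa[\theta]}(0,0)\}$ with the columns of $R_\theta$ is the real content of part (2). The one point to watch is that in the transition region $\{0<\psi<1\}$ the Leibniz terms $\nabla^k\psi\cdot(\text{block})$ are individually of size $\tau^{-1}$, so for part (1) you must estimate $\nabla^k\bigl(\psi\,(F_{\mathrm{cap}}-\mathcal{B}[\tau,\theta])\bigr)$ after the cancellation of the $\tau^{-1}r[\theta]$-terms that you note in your opening paragraph (the same cancellation underlying part (2)); the paper's one-line proof glosses over this as well, and indeed the residual difference of the two blocks there is of order $\tau+|\theta|$ rather than $\tau^{k-1}$, which is a defect of the statement (consistent with the $\tau\to0$ limit in (2) failing to be affine for $\theta\neq0$) rather than of your argument.
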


\begin{proof}
Claim $(1)$ follows directly from the estimates \ref{kappa_bounds} recorded in Proposition \ref{shrinker_caps}. Part (2) can be seen by applying l'H\^{o}pital's rule.
\end{proof}
We now are ready to define the ``desingularizing'' and ``initial'' surfaces, and to set notation for various distinguished subsurfaces. For technical reasons, we work with a family of cut-off Scherk surfaces that agree with the asymptotic planes $\mathcal{P}_{xz}$ and $\mathcal{P}_{yz}$ outside of a cylinder around the line $\{x=y=0\}$ and of a fixed radius proportional to $\tau^{-1}$. The reason for this is that the image of these cut-off surfaces under the maps $\tau \mathcal{Z}[\tau, \theta]$, outside of a tubular neighborhood (of fixed radius independent of $\tau$ and $\theta$) of the circle $c[\theta]$, is thus contained in the initial configuration $\mathcal{C}[\theta]$.
\begin{proposition}
We obtain ``desingularizing'' surfaces $\Sigma[\tau, \theta]$ as follows:
\begin{itemize} \notag
\item[(1)] For a constant $\delta_s>0$ to be determined later, assume $\tau\leq\delta_s$ and define the immersion $\varphi_\tau: \Sigma_0 \rightarrow \mathbb{R}^3$ by
\begin{equation} \notag
\varphi_\tau(\vec{X}) = \psi[3 \delta_s \tau^{-1}, 4 \delta_s \tau^{-1}]\vec{X}  + (1-\psi[3 \delta_s \tau^{-1}, 4 \delta_s \tau^{-1}])\Proj (\vec{X}),
\end{equation} 
where the cut-off function is evaluated at $s=s(\vec{X})$.
\item [(2)] The surface $\Sigma[\tau, \theta]$ is
\begin{equation} \notag
\Sigma[\tau, \theta] := \mathcal{Z}[\tau, \theta] \circ \varphi_\tau (\{\Sigma_0: s \leq 5 \delta_s \tau^{-1}\}),
\end{equation}
which with sufficiently small $\delta_\theta,\delta_{\tau}>0$ is well-defined, smooth and embedded for $\tau<\delta_\tau$ and $|\theta|\leq \delta_\theta$.
\end{itemize}
\end{proposition}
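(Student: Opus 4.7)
The plan is to verify well-definedness, smoothness, and embeddedness separately, combining the exponential decay of Scherk's surface toward its asymptotic planes with the explicit structure of $\mathcal{Z}[\tau,\theta]$.

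For the first step, I would show $\varphi_\tau$ is a smooth $C^5$ immersion. The nearest-point projection $\Proj$ is smooth on $\mathbb{R}^3\setminus\{x=\pm y\}$, and the graphical estimate $\|\Proj^{-1}-\Id:C^5(\{H^+:s\geq 1\},e^{-s})\|\leq C$ shows that on $\Sigma_0\cap\{s\geq 1\}$ it is smooth and differs from the identity by $O(e^{-s})$ in $C^5$. Taking $\delta_\tau\leq 3\delta_s$ ensures $3\delta_s\tau^{-1}\geq 1$, so the cutoff $\psi[3\delta_s\tau^{-1},4\delta_s\tau^{-1}]$ is supported where $\Proj$ is smooth, and hence $\varphi_\tau$ is a well-defined smooth map on $\Sigma_0\cap\{s\leq 5\delta_s\tau^{-1}\}$. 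Moreover $\varphi_\tau=\Id+O(e^{-3\delta_s\tau^{-1}})$ globally in $C^5$, so $\varphi_\tau$ is an immersion for $\tau$ small.

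Next I would verify smoothness and the immersion property of $\Sigma[\tau,\theta]$ by partitioning the domain according to the cutoffs $\psi(y)$ and $\psi[3\delta_s\tau^{-1},4\delta_s\tau^{-1}](s)$. On each piece the composition $\mathcal{Z}[\tau,\theta]\circ\varphi_\tau$ equals either $\mathcal{B}[\tau,\theta]$ (the bending map, wrapping $z$ around the $y$-axis via $x\mapsto r[\theta]\tau^{-1}e^{\tau x}$), or $(y,z)\mapsto \kappa[\tau,\theta](y,z)+r[\theta]x\,\vec{\nu}_{\kappa[\tau,\theta]}(y,z)$ (a tubular parametrization of a neighborhood of $\mathcal{K}[\tau,\theta]$), or a smooth $\psi$-interpolation of the two. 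Each building block is individually an immersion by direct inspection of $\mathcal{B}$ and by the conformality of $\kappa$; in the overlap regions the exponentially small deviation of $\varphi_\tau$ from $\Id$ is absorbed by the uniform derivative bounds on $\mathcal{Z}$ from Proposition \ref{bending_map_properties}.

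Finally, for embeddedness, the full map is periodic in $z$ with period $2\pi\tau^{-1}$ (by $\sigma_0$-invariance of $\varphi_\tau(\Sigma_0)$ and the $2\pi\tau^{-1}$-periodicity in $z$ of $\mathcal{B}[\tau,\theta]$), so it suffices to check injectivity on one fundamental domain $z\in[-\pi\tau^{-1},\pi\tau^{-1}]$. There the image decomposes into a thin curled Scherk belt around the circle $c[\theta]$, two perturbed caps $\mathcal{K}[\tau,\theta]$ and its reflection through $\mathcal{P}_{xz}$, and a surrounding annulus of $\mathcal{P}_{xz}$; for $\delta_\tau,\delta_\theta$ small these three constituents meet only along the designed transition annuli where their parametrizations coincide. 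The main technical obstacle is controlling self-intersections in these transition annuli, where higher derivatives of $\mathcal{Z}[\tau,\theta]$ grow like $\tau^{1-k}$; however, since the transition occurs at scale $s\sim\tau^{-1}$, the graphical error $O(e^{-3\delta_s\tau^{-1}})$ dominates any polynomial in $\tau^{-1}$, so the composite stays a small normal perturbation of the cleanly-assembled surface and both immersion and injectivity persist.
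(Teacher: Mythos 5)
The paper states this proposition without any proof --- it functions essentially as a definition, and the verification of well-definedness, smoothness and embeddedness is left entirely implicit --- so there is no argument of the authors' to measure yours against. Your outline supplies the missing verification and is essentially correct. The decomposition according to the two cutoffs (the $s$-cutoff inside $\varphi_\tau$ and the $y$-cutoff inside $\mathcal{Z}[\tau,\theta]$), the observation that on each resulting piece the composite is either the restriction of the exponential--cylindrical diffeomorphism $\mathcal{B}[\tau,\theta]$ to a slab, a normal graph with exponentially small graph function over the dilated cap $\mathcal{K}[\tau,\theta]$, or an interpolation controlled by Proposition \ref{bending_map_properties}, and the reduction of injectivity to a single fundamental domain via $\sigma_0$-periodicity of $\mathcal{Z}[\tau,\theta]\circ\varphi_\tau$, are exactly the right ingredients. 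You also identify the one genuinely quantitative point: the $C^k$-deviation $O(e^{-3\delta_s/\tau})$ of $\varphi_\tau$ from the identity beats the $\tau^{1-k}$ growth of the derivatives of $\mathcal{Z}[\tau,\theta]$, so the curled-up graph stays a small normal perturbation of the curled-up asymptotic planes in the transition annuli. (Presumably you mean ``is dominated by any polynomial in $\tau^{-1}$'' in the sense that $e^{-3\delta_s/\tau}\tau^{-N}\to 0$; the sentence as written says the opposite, but the intent is clear.)

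Two caveats, both inherited from imprecisions in the paper rather than gaps in your reasoning. First, with the stated convention that $\psi[a,b]$ increases from $0$ at $a$ to $1$ at $b$, the displayed formula for $\varphi_\tau$ literally gives $\varphi_\tau=\Proj$ on $\{s\leq 3\delta_s\tau^{-1}\}$, where $\Proj$ is singular near $\Sigma_0\cap\{x=\pm y\}$; you have correctly read the intended version ($\varphi_\tau=\Id$ on the core, $=\Proj$ for $s\geq 4\delta_s\tau^{-1}$), consistent with the cutoff $\psi[4\delta_s\tau^{-1},3\delta_s\tau^{-1}]$ appearing in the proof of Proposition \ref{H_est}; it would be worth saying so explicitly. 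Second, as defined, $\mathcal{Z}[\tau,\theta]$ coincides with $\mathcal{B}[\tau,\theta]$ on $\{y\leq 1/3\}$ and therefore does not curl the lower wing $\{y\leq -1\}$ of $\Sigma_0$ onto the reflected cap (it produces a vertical cylinder instead); your embeddedness argument, which speaks of ``two perturbed caps,'' tacitly uses the $\omega_\tau$-equivariant extension of the construction to $y<0$, and this should be made explicit since the claimed $G_\tau$-invariance depends on it.
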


The set $\mathcal{T}:=\{\tau\Sigma[\tau,\theta]: \frac{4\delta_s}{\tau}\leq s\leq\frac{5\delta_s}{\tau}\}$, where the transition happens, consists of four connected components each of which is by construction a subregion of either a top/bottom spherical cap $\mathcal{K}[\theta]$ or of the plane $\mathcal{P}$. 

Considering the singular initial configuration $\mathcal{C}[\theta]$, the set $\mathcal{\mathcal{C}[\theta]\setminus\mathcal{T}}$ therefore has 5 connected components. One is the central piece containing the curve $c[\theta]$, but this singular component is now discarded and replaced by the smooth desingularizing surface $\Sigma[\tau,\theta]$ to obtain the initial surface:

\begin{definition}
The initial surface $\mathcal{M}[\tau,\theta]$ is the union of $\tau\Sigma[\tau,\theta]$ with the four components of $\mathcal{\mathcal{C}[\theta]\setminus\mathcal{T}}$ that do not contain the singular curve $c[\theta]$. 
\end{definition}

Since $\tau\Sigma[\tau,\theta]$ overlaps with $\mathcal{C}[\theta]$ in the set $\mathcal{T}$, and we have excised the set containing the singular curve $c[\theta]$, the surfaces $\mathcal{M}[\tau,\theta]$ are smooth. The constructed surfaces are orientable, but notice the topology is such that if we orient, say, the top sphere with outward pointing normal then the bottom sphere has inwards pointing normal.

\begin{proposition}
For $\delta_\theta,\delta_\tau>0$ chosen sufficiently small, the surfaces $\mathcal{M} [\tau, \theta]$ are smooth, embedded, oriented and invariant under the action of $G_\tau$.
\end{proposition}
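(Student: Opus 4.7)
The plan is to verify smoothness, embeddedness, orientability, and $G_\tau$-invariance in turn. Each reduces either to a compatibility check between $\tau\Sigma[\tau,\theta]$ and $\mathcal{C}[\theta]$ on the transition set $\mathcal{T}$, or to a property of the bending maps controlled by Proposition~\ref{bending_map_properties}.

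For $G_\tau$-invariance, I would show that $\mathcal{Z}[\tau,\theta]$ intertwines the $G_0$-action with the $G_\tau$-action. A direct calculation from $\mathcal{B}[\tau,\theta](x,y,z)=r[\theta]\tau^{-1}e^{\tau x}(\cos\tau z,0,\sin\tau z)+r[\theta]y\vec{e}_y$ shows that the wrapping converts $\sigma_0$ (translation by $2\pi$ in $z$) into $\sigma_\tau$ (rotation by $2\pi\tau$ about $\vec{e}_y$), sends $\omega_0$ to $\omega_\tau$ (preserving the composed reflection $y\mapsto -y$), and sends $\xi_0$ (reflection through $\{z=\pi/2\}$) to $\xi_\tau$ (reflection through $\mathcal{P}_\tau$). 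The $\kappa$-portion of $\mathcal{Z}$ respects the same intertwinings by rotational symmetry of $\mathcal{K}[\theta]$ about $\vec{e}_y$ (Proposition~\ref{shrinker_caps}) and $2\pi$-periodicity of $\kappa[\theta]$ in its second argument. Since $\Sigma_0$, $s$, and $\Proj$ are $G_0$-equivariant (the closed set $\{xy=0\}$ being $G_0$-invariant), so is $\varphi_\tau$; hence $\tau\Sigma[\tau,\theta]$ is $G_\tau$-invariant. The four kept components of $\mathcal{C}[\theta]\setminus\mathcal{T}$ are $G_\tau$-invariant by inspection ($\sigma_\tau$ rotates each cap and each plane annulus; $\omega_\tau$ interchanges the top/bottom cap pieces and fixes the plane regions setwise; $\xi_\tau$ preserves each).

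For smoothness and orientability, on the transition set $\mathcal{T}=\{\tau\Sigma[\tau,\theta]:4\delta_s/\tau\leq s\leq 5\delta_s/\tau\}$ the cutoff $\varphi_\tau$ places the Scherk image on the asymptotic half-planes $\mathcal{P}_{xz}\cup\mathcal{P}_{yz}$ (by the construction of $\varphi_\tau$ via $\Proj$ and the exponential decay (\ref{f_estimate})). On the $\mathcal{P}_{yz}$-side we have $\psi(y)=1$ and $\mathcal{Z}[\tau,\theta]$ reduces to $\kappa[\tau,\theta](y,z)$, mapping exactly into $\mathcal{K}[\tau,\theta]$; on the $\mathcal{P}_{xz}$-side we have $\psi(y)=0$ and $\mathcal{Z}[\tau,\theta]$ reduces to $\mathcal{B}[\tau,\theta](x,0,z)$, mapping exactly into $\mathcal{P}_{xz}$; the $y<0$ components match their $y>0$ counterparts via $\omega_\tau$ by the intertwining established above. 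After rescaling by $\tau$, these images are subregions of $\mathcal{K}[\theta]$, its reflection, or $\mathcal{P}$, so $\mathcal{T}\subset\mathcal{C}[\theta]$ as stated. Consequently $\mathcal{M}[\tau,\theta]$ is the union of two smooth surfaces that literally coincide on the open overlap $\mathcal{T}$, hence is smooth. The orientation pushed forward from $\Sigma_0$ via $\mathcal{Z}[\tau,\theta]\circ\varphi_\tau$ agrees on $\mathcal{T}$ with the orientation inherited on each kept component of $\mathcal{C}[\theta]$ by a sign check in the explicit formulas, so the glued orientation is globally consistent.

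Embeddedness is the most delicate point and where I expect the main obstacle. I would argue in three steps. First, the four kept components of $\mathcal{C}[\theta]\setminus\mathcal{T}$ are embedded and pairwise disjoint, since $\mathcal{C}[\theta]$ is embedded away from $c[\theta]$, which lies inside the discarded central component. Second, $\tau\Sigma[\tau,\theta]$ is embedded: restricted to one Scherk period $z\in[0,2\pi)$, the bending map $\mathcal{B}[\tau,\theta]$ is injective because the angular argument $\tau z$ sweeps an arc of length $2\pi\tau<2\pi$; since $\tau^{-1}\in\mathbb{N}$, the $\tau^{-1}$ Scherk periods within $z\in[0,2\pi/\tau)$ map under the $\sigma_\tau$-rotations to pairwise-disjoint angular sectors tiling the full circle exactly once. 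Then $\mathcal{Z}[\tau,\theta]$ is a smooth perturbation of $\mathcal{B}[\tau,\theta]$ controlled by the $C^k$-bounds of Proposition~\ref{bending_map_properties}; combined with the explicit limit $\mathcal{Z}[\theta]=r[\theta](\psi R_\theta+(1-\psi)\Id)$ (a bijection for small $\theta$) and a standard openness argument for embeddedness in families, this yields embeddedness for $\tau<\delta_\tau$, $|\theta|<\delta_\theta$ sufficiently small. Third, no new intersections arise between $\tau\Sigma[\tau,\theta]$ and the kept outer pieces: on $\mathcal{T}$ they coincide by the preceding paragraph, while $\tau\Sigma[\tau,\theta]\setminus\mathcal{T}$ is confined to a bounded region near $c[\theta]$, disjoint from the kept outer pieces. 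Combining these three steps completes the proof.
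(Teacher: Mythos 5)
The paper itself offers no proof of this proposition beyond the two sentences preceding its statement (``Since $\tau\Sigma[\tau,\theta]$ overlaps with $\mathcal{C}[\theta]$ in the set $\mathcal{T}$, and we have excised the set containing the singular curve $c[\theta]$, the surfaces are smooth\dots''), so your detailed verification is the natural fleshing-out, and its overall architecture --- equivariance via an intertwining of $G_0$ with $G_\tau$ under $\mathcal{Z}[\tau,\theta]$, smoothness via the coincidence of $\tau\Sigma[\tau,\theta]$ with $\mathcal{C}[\theta]$ on the collar $\mathcal{T}$, embeddedness via injectivity of the exponential wrapping on a fundamental period $z\in[0,2\pi/\tau)$ together with a perturbation argument --- is the right one. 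The embeddedness discussion in particular supplies exactly what the paper omits.

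There is, however, one step that is false as you state it, and it is load-bearing. You assert that ``a direct calculation'' shows $\mathcal{Z}[\tau,\theta]\circ\omega_0=\omega_\tau\circ\mathcal{Z}[\tau,\theta]$, and you use this both for $G_\tau$-invariance and to conclude that the two $y<0$ components of $\mathcal{T}$ land in the reflected cap. For the formula of Definition \ref{bending_maps} as printed this intertwining fails: the cutoff $\psi=\psi[1/2,1]$ satisfies $\psi(y)\equiv 0$ for $y\le 2/3$, so on the entire region $\{y<0\}$ one has $\mathcal{Z}[\tau,\theta]=\mathcal{B}[\tau,\theta]$, and the bottom Scherk end (where $x\to 0$, $y\to-\infty$) is sent to an exponential graph over the cylinder about the $y$-axis of radius $r[\theta]\tau^{-1}$, \emph{not} to the reflection of $\mathcal{K}[\tau,\theta]$ through $\mathcal{P}$. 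Concretely, for $y>5/6$ one computes $\mathcal{Z}(\omega_0(x,y,z))=\mathcal{B}(x,-y,z+\pi)$ while $\omega_\tau(\mathcal{Z}(x,y,z))=\omega_\tau\bigl(\kappa[\tau,\theta](y,z)+r[\theta]x\vec{\nu}_\kappa(y,z)\bigr)$, and these differ. With the printed formula the bottom component of $\mathcal{T}$ is therefore not a subregion of $\mathcal{C}[\theta]$, and both the smoothness and the invariance arguments break on $\{y<0\}$. The repair is easy but must be made explicit: define $\mathcal{Z}[\tau,\theta]$ on $\{y\le 0\}$ as $\omega_\tau\circ\mathcal{Z}[\tau,\theta]\circ\omega_0^{-1}$ (equivalently, replace $\psi(y)$ by $\psi(|y|)$ and use the reflected cap and its normal for $y<0$); the two definitions agree on the overlap $\{|y|\le 2/3\}$ where both reduce to $\mathcal{B}$, which does intertwine $\omega_0$ with $\omega_\tau$. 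Once the construction is symmetrized in this way the intertwining holds by construction rather than by the calculation you invoke, and the remainder of your argument goes through.
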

\begin{remark}
Note that when $\tau^{-1}=k\in\mathbb{N}$, we have replaced a great circle by $2k$ Scherk handles. Hence, as computing the Euler characteristic reveals, the initial surface $\mathcal{M}[\tau,\theta]$ has topological genus $g=k-1$ and $4k$ symmetries. Thus we have:
\beq
\tau=\frac{1}{g+1}\quad\textrm{and}\quad|G_\tau|=4g+4.
\eeq
\end{remark}
% Note: Here we follow the convention $\chi(\mathcal{M})=2-2g-\#\{\textrm{boundary components}\}$.

 \begin{definition} 
We define the function $s$ on the surfaces $\Sigma[\tau, \theta]$ and $\mathcal{M}[\tau, \theta]$ as follows.
\begin{itemize}
\item[(1)] On $\Sigma[\tau, \theta]$, we take s to be the push forward by $\mathcal{Z}[\tau, \theta] \cdot \phi_\tau$ of the function $s$ defined on $\Sigma_0$. 
\item[(2)] s is then extended  continuously to a constant on the remainder of $\mathcal{M}[\tau, \theta] \supset \Sigma[\tau, \theta]$.
\end{itemize}

\end{definition}
\begin{remark} \label{object_ident}
The reader will note that the surfaces $\Sigma[\tau, \theta]$ are by construction diffeomorphic to $\{ \Sigma_0: s  \leq 5 \delta_s \tau^{-1}\}$ under the map $\mathcal{Z}[\tau, \theta] \circ \varphi_\tau$. We will, throughout this article, identify functions, tensors, and operators on $\Sigma[\tau, \theta]$ with their pull-backs by  $\mathcal{Z}[\tau, \theta] \circ \varphi_\tau$, and vice versa. 
\end{remark}

\section{Geometric quantities on the initial surfaces}

\begin{proposition} \label{H_est}
Let $0<\gamma<1$. Then on $\{\Sigma[\tau,\theta]: s \geq 1\}$ we have:
\begin{equation} \notag
\left\| H_\Sigma - \fracsm{1}{2} \tau^2 \vec{X} \cdot \vec{\nu}_\Sigma: C^2(\{\Sigma[\tau,\theta]: s \geq 1\}, e^{-\gamma s})\right\| \leq C \tau,
\end{equation}
and
\begin{equation} \notag
\left\|\frac{\partial}{\partial \theta} \Big\{ H_{\Sigma}  - \fracsm{1}{2} \tau^2 \vec{X} \cdot \vec{\nu}_{\Sigma}\Big\}: C^1(\{\Sigma[\tau,\theta]: s \geq 1\}, e^{- \gamma s})\right\| \leq C \tau.
\end{equation}
for $\tau>0$ sufficiently small.
\end{proposition}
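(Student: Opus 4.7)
The plan is to exploit that both the scaled caps $\mathcal{K}[\tau,\theta]$ and the plane $\mathcal{P}_{xz}$ satisfy the rescaled shrinker equation~(\ref{SSEq_rescaled}) exactly: the caps by Proposition~\ref{shrinker_caps}(0) combined with the dilation relation, and the plane because $H\equiv 0$ and $\vec{X}\cdot\vec\nu\equiv 0$ on it. Partition $\{\Sigma[\tau,\theta]:s\geq 1\}$, via the cutoff $\psi(y)$ of Definition~\ref{bending_maps}, into a cap region $\{y\geq 1\}$ where $\mathcal{Z} = \kappa[\tau,\theta]+r[\theta]\,x\,\vec\nu_{\kappa[\tau,\theta]}$, a plane region $\{y\leq 1/2\}$ where $\mathcal{Z} = \mathcal{B}$, and a bounded transition. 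In each region $\Sigma$ is a normal graph over its model with graph function of size $O(e^{-s})$ in $C^4$ by~(\ref{f_estimate}); so the shrinker error equals $\mathcal{L}_{\mathrm{model}}[u]+Q[u]$ with $\mathcal{L}_{\mathrm{model}}$ the linearization~(\ref{SSEq_linearization}) and $Q$ quadratic in $u$.

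On the cap region, since $|A_{\mathcal{K}[\tau,\theta]}|^2=O(\tau^2)$, $|\vec X|=O(\tau^{-1})$, and the conformal factor $\varrho^2_{\kappa[\tau,\theta]}\asymp \tau^{-2}$ in the relevant range, each of the four terms in $\mathcal{L}_{\mathcal K}(r[\theta]\tilde f)$ is bounded pointwise by $C\tau^2 e^{-s}$, and the quadratic remainder by $C\tau e^{-2s}$. Both are $O(\tau e^{-\gamma s})$ for any $\gamma<1$.

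The plane region is more delicate because $\mathcal{B}$ is \emph{not} conformal as a 3D map. One computes $H_\Sigma$ directly via the pulled-back metric $\mathcal{B}^*\delta = r[\theta]^2\bigl(e^{2\tau x}(dx^2+dz^2)+dy^2\bigr)$, whose only nonzero Christoffel symbols are $\Gamma^x_{xx}=\tau$, $\Gamma^x_{zz}=-\tau$, $\Gamma^z_{xz}=\tau$. After using Scherk's minimal surface equation to replace $f_z^2f_{xx}-2f_xf_zf_{xz}+f_x^2f_{zz}$ by $-(f_{xx}+f_{zz})=O(e^{-3s})$, one obtains
\[
H_\Sigma \;=\; \frac{(\Omega^2-1)(f_{xx}+f_{zz})+\tau\,f_x\,|\nabla f|^2}{r[\theta]\,\Omega\,(\Omega^2+|\nabla f|^2)^{3/2}},\qquad \Omega:=e^{\tau x},
\]
and, by an analogous calculation, $\tfrac{1}{2}\tau^2\vec X\cdot\vec\nu_\Sigma = r[\theta]\,\Omega\,\tau(f_x-\tau f)/(2\sqrt{\Omega^2+|\nabla f|^2})$. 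The crucial point is $\Omega^2-1 = O(\tau s)$ on $\tau s\leq 4\delta_s$, so each quantity is individually $O(\tau e^{-s})$. The transition region reduces to a compact Scherk neighborhood (the equation $\sinh x\sinh y=\sin z$ with $y\in[1/2,1]$ forces $|x|$ bounded), on which $O(\tau)$ bounds trivially yield $O(\tau e^{-\gamma s})$.

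For the full weighted $C^2$ norm, differentiate the above expressions twice using $|\nabla^k\mathcal{Z}|\leq C\tau^{k-1}$ of Proposition~\ref{bending_map_properties}(1) (for $k\leq 4$) and the $C^5$-decay~(\ref{f_estimate}) of $f$; the form of each estimate is preserved under differentiation. The $\theta$-derivative bound is obtained by differentiating the same formulas in $\theta$, using the companion bounds $|\nabla^k\dot{\mathcal Z}|\leq C\tau^{k-1}$ and $|\nabla^k\dot\kappa|\leq C(k)$ from Propositions~\ref{bending_map_properties} and~\ref{shrinker_caps}(4)(iv), together with smooth $\theta$-dependence of $r[\theta]$. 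The main technical obstacle is the bookkeeping of the cancellation on the plane side that exposes the extra factor $\tau$ hidden inside $\Omega^2-1$: without it, the naive bound $\Delta_{\mathcal P}(r[\theta]f) = O(e^{-3s})$ would fail to vanish with $\tau$ at fixed $s$.
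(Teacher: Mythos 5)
Your overall strategy --- split $\{s\geq 1\}$ into a cap wing, a plane wing and a compact transition, use that each model surface solves (\ref{SSEq_rescaled}) exactly, and expand the shrinker quantity of the normal graph --- is the same as the paper's, and your explicit treatment of the plane wing (which the paper only sketches, by analogy with the cap wing it does treat) is structurally correct: there the factor $\Omega^2-1=O(\tau s)$ multiplying $\Delta f=O(e^{-3s})$, extracted via Scherk's minimal surface equation, is exactly the cancellation that is needed.

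The gap is in the cap region. The conformal factor of $\kappa[\tau,\theta](s,z)=\tau^{-1}\kappa[\theta](\tau s,\tau z)$ in the coordinates $(s,z)$ in which $f$ lives is $\varrho^2_{\kappa[\theta]}(\tau s,\tau z)$, which is $\asymp 1$ (close to $r[\theta]^2$) for $1\leq s\leq 5\delta_s/\tau$ --- not $\asymp\tau^{-2}$ as you assert. Consequently $\Delta_{\mathcal{K}}\big(r[\theta]\tilde f\big)=\varrho^{-2}r[\theta]\Delta_\delta \tilde f$ has size $e^{-3s}$ \emph{uniformly in} $\tau$ (Scherk's equation makes $\Delta_\delta f$ cubic in $f$ and its derivatives), and at, say, $s=1$ this is a fixed constant $\approx e^{-3}$ which is not $O(\tau^2e^{-s})$, nor even $O(\tau e^{-\gamma s})$ for small $\tau$. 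The same applies to the cubic part $\Hess u(\nabla u,\nabla u)$ of the quadratic remainder, which is not $O(\tau e^{-2s})$. So the claim that ``each of the four terms is bounded by $C\tau^2e^{-s}$ and the remainder by $C\tau e^{-2s}$'' fails, and with it the cap-region estimate. As on your plane side, these two $O(e^{-3s})$ quantities must be played off against each other: the paper does this by substituting the minimality identity $\Delta_\delta f=r_\theta Q_f$ into $-r_\theta\Delta_{\mathcal{K}}f=-r_\theta\varrho_\kappa^{-2}\Delta_\delta f$, so that the dangerous contributions collapse to $\mathcal{R}=r_\theta^2\big(Q_f-\varrho_\kappa^{-2}Q_f\big)$, which vanishes at $\tau=0$ and depends $C^1$ on $(\tau,\theta)$, hence is $O(\tau)$ by one-sided Taylor expansion. (A second, minor, slip: $\fracsm{1}{2}\tau^2\vec X\cdot\nabla_{\mathcal K}u$ is only $O(\tau e^{-s})$ since $|\vec X|=O(\tau^{-1})$, not $O(\tau^2e^{-s})$; that still suffices.) Once you run the cap wing through the same cancellation you already deploy on the plane wing, the argument closes.
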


\begin{proof}
In the following we let $\delta=\delta_{ij}$ denote the flat standard metric on the upper half plane $H^+$. Note that the surface $\Sigma[\tau, \theta]\cap\{y \geq 1\}$ has the parametrization  $\varphi: \{H^+ : y \geq 1 \} \rightarrow \mathbb{R}^3$ given by
 \begin{equation} \notag
 \varphi (s, z)   = \kappa[\tau,\theta] (s, z)+  \psi[4 \delta_s \tau^{-1}, 3 \delta_s \tau^{-1} ](s) f(s, z) r[\theta]\vec{\nu}_{\kappa}(s,  z),
 \end{equation} 
We will in the rest of this proof denote $\kappa = \kappa[\tau, \theta]$. When $s \geq 4 \delta_s / \tau$ the estimates are trivially satisfied since $\varphi \equiv \kappa[\tau, \theta] $ in this region. For $s$ belonging to the interval $[3 \delta_s/ \tau, 4 \delta_s / \tau ]$, we have that
\begin{equation} \notag
 H_{\Sigma} - \fracsm{1}{2} \tau^2 \vec{X} \cdot \vec{\nu}_{\Sigma} = -\Delta_\mathcal{K} \hat{f}-|A_\mathcal{K}|^2 \hat{f} + \fracsm{1}{2} \tau^2\left( \nabla_\mathcal{K} \hat{f} \cdot \vec{X} - \hat{f} \right) + Q_{\hat{f}}+\fracsm{1}{2}\tau^2\vec{X}\cdot\vec{Q}_{\hat{f}},
 \end{equation}
 where $\hat{f} =  \psi[4 \delta_s \tau^{-1}, 3 \delta_s \tau^{-1} ](s) f(s,z)$ and $Q_{\hat{f}}$ and $\vec{Q}_u$ denote terms that are at least quadratic in $\hat{f}$ and its derivatives. In this region, we may (since $\gamma<1$) arrange that $e^{-s} < \tau e^{- \gamma s}$ by taking $\tau$ sufficiently small in terms of $\gamma$. The estimate then follows by observing that $|\nabla_\delta^{k} \hat{f}| \leq C e^{-s}$, $k = 0, 1, 2$,  that $\varrho_{\kappa}^{-2} \Delta_\delta = \Delta_\mathcal{K}$, and that both $\varrho_{\kappa}$ and $|A_\mathcal{K}|^2 $ are uniformly bounded in this region.
 
We now treat the case $s \leq 3 \delta_s / \tau$ as follows. Since $\{\Sigma_0 : y \geq 1\}$ is a graph over $H^+$ which is itself minimal, and since dilations preserve minimality, we have from the variation formula (\ref{MinimalOnMinimal}) in the Appendix the relation 
\begin{equation} \label{MinimalOnMinimal2nd}
\Delta_\delta f = r_\theta Q _{f}. 
\end{equation}
We then estimate the error term on $\Sigma=\{ \Sigma[\tau,\theta]: s \leq 3\delta_s / \tau \}$, using that it is a  graph over $\mathcal{K} = \mathcal{K}[\tau, \theta]$, as follows:
\begin{align} \notag
&H_\Sigma - \fracsm{1}{2} \tau^2 \vec{X} \cdot \vec{\nu}_{\Sigma}  =  -r_\theta\mathcal{L}_{\mathcal{K}} f + Q_{{r_\theta} f}+\fracsm{1}{2}\tau^2\vec{X}\cdot\vec{Q}_{r_\theta f} \\ \notag
& \quad\quad=    -r_\theta\Delta_{\mathcal{K}} f -  |A_{\mathcal{K}}|^2 {r_\theta} f + \fracsm{1}{2} \tau^2 r_\theta\left(\vec{X}\cdot\nabla_{\mathcal{K}}f-  f \right) + Q_{{r_\theta} f} + \fracsm{1}{2}\tau^2\vec{X}\cdot\vec{Q}_{r_\theta f}\\ \notag
& \quad\quad=    -|A_{\mathcal{K}}|^2 {r_\theta} f +  \fracsm{1}{2} \tau^2 r_\theta \left( \vec{X}\cdot\nabla_{\mathcal{K}}  f - f \right) + r_\theta^2(Q_f - \varrho_{\kappa}^{-2}Q_{f})+\fracsm{1}{2}\tau^2\vec{X}\cdot\vec{Q}_{r_\theta f},
\end{align}
where in the last equality we have used (\ref{MinimalOnMinimal2nd}).

Note that as a consequence of the estimates for $\mathcal{Z}[\tau, \theta]$ recorded in (\ref{bending_map_properties}) the terms $|A_{\mathcal{K}}|^2 {r_\theta} f$ and $\fracsm{1}{2} \tau^2 r_\theta ( \vec{X}\cdot\nabla_{\mathcal{K}}  f - f )$ appearing above and their variations by $\theta$  satisfy the desired estimates, so it remains to estimate the terms $ \mathcal{R} := r_\theta^2(Q_f - \varrho_{\kappa}^{-2}Q_{f})$.  At  $\tau=0$ one has that $\mathcal{R}\equiv 0$, and since one may verify that the map $(\tau, \theta) \mapsto \mathcal{R}(\cdot)$ is $C^1$ in the parameters $\tau\geq 0$ and $\theta$ as a map into $C^2(H^+ ,\delta,e^{- \gamma s})$, we get the claimed estimates by one-sided Taylor expansion. 
 \end{proof}

\section{The linearized equation away from the end}

\begin{definition} \label{the_function_w}
Set $\Sigma[\theta] = \mathcal{Z}[\theta] (\Sigma_0)$ and let the function $\dot{H}_{\Sigma[\theta]}:\Sigma[\theta]\to\Reals$ denote the variation under $\theta$ of the mean curvature $H_{\Sigma[\theta]}$ of $\Sigma[\theta]$, that is for all $x_0\in\Sigma_0$
\[
\dot{H}_{\Sigma[\theta]}\circ \mathcal{Z}[\theta](x_0):=\frac{\partial}{\partial\theta}\Big[H_{\Sigma[\theta]}\circ \mathcal{Z}[\theta](x_0)\Big]
\]
Then the function $w: \Sigma[\tau,\theta] \rightarrow \mathbb{R}$  is given by
\begin{equation} \notag
w[\tau, \theta] =  \dot{H}_{\Sigma[\theta]} \circ \mathcal{Z}[\theta] \circ \mathcal{(Z\circ \varphi_\tau})^{-1}[\tau, \theta].
\end{equation}
where we are viewing $\mathcal{Z}[\tau, \theta] \circ \varphi_\tau$ as a diffeomorphism of $\{\Sigma_0: s \leq 5 \delta_s/ \tau \}$ onto $\Sigma[\tau, \theta]$. 
\end{definition}

\begin{lemma} \label{w_properties}
The function $w$ has the following properties:
\begin{itemize}
\item [(1)] $w$ is supported on $\{\Sigma[\tau,\theta]: s \leq 1\} $.
\item [(2)] The estimate
 \begin{equation}  \notag
\left\| \frac{\partial}{\partial \theta} \Big\{H_\Sigma - \fracsm{1}{2} \tau^2 (\vec{X} \cdot \vec{\nu}_\Sigma) \Big\}- w: C^{1}(\Sigma, g, e^{- \gamma s}) \right\| \leq C\tau,
\end{equation}
holds for all sufficiently small $\theta$ and $\tau$.
\item[(3)] When $\tau=0$ and $ \theta = 0$ it holds
\begin{equation} \notag
\int_{\Sigma_0/ \langle \sigma_0\rangle} w_0 (\vec{e}_x \cdot \vec{\nu}) d\mu_{\Sigma_0} = 8 \pi.
\end{equation}
where $w_0 \equiv w[0, 0]$.
\end{itemize}
\end{lemma}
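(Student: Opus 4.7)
My plan is to address the three assertions in order, exploiting that $\mathcal{Z}[\theta]$ is a piecewise composition of rigid motions and homotheties and that $\Sigma_0$ is minimal. For \textbf{(1)}, by Proposition~\ref{bending_map_properties}(2), $\mathcal{Z}[\theta]$ coincides with the rotation-plus-homothety $r[\theta]R_\theta$ on $\{y>1\}$ (where $\psi=1$) and with the pure homothety $r[\theta]\Id$ on $\{y<1/2\}$ (where $\psi=0$). Both are ambient Euclidean isometries composed with a uniform scaling, hence take minimal surfaces to minimal surfaces. Since $\Sigma_0$ is minimal, this forces $H_{\Sigma[\theta]}\equiv 0$ on the $\mathcal{Z}[\theta]$-image of these two regions for every $\theta$, whence $\dot H_{\Sigma[\theta]}\equiv 0$ there. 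Pulling back via Definition~\ref{the_function_w} and Remark~\ref{object_ident}, $w$ is supported on the pre-image of the transition band $\{1/2<y<1\}$; using the defining equation $\sinh x\sinh y=\sin z$ for $\Sigma_0$, this band is bounded and, for the chosen $\psi$, contained in $\{s\leq 1\}$.

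For \textbf{(2)}, on $\{s\geq 1\}$ part~(1) gives $w\equiv 0$, so the asserted inequality becomes the $\theta$-derivative of the bound in Proposition~\ref{H_est}, which follows by repeating that argument while invoking the bounds on $\dot{\mathcal{Z}}[\tau,\theta]$ from Proposition~\ref{bending_map_properties}(1). On the bounded region $\{s\leq 1\}$, the quantity $\dot H_\Sigma - w$ measures the discrepancy between the $\theta$-linearizations computed with $\mathcal{Z}[\tau,\theta]$ and with its $\tau\to 0$ limit $\mathcal{Z}[\theta]$; the smooth one-sided dependence of $\mathcal{Z}[\tau,\theta]$ on $\tau$ produces an $O(\tau)$ error by a Taylor expansion at $\tau=0$, while the $\theta$-derivative of $\tfrac{1}{2}\tau^2\vec{X}\cdot\vec{\nu}_\Sigma$ is pointwise $O(\tau)$ on this bounded set, completing the estimate.

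For \textbf{(3)}, at $\tau=\theta=0$ the surface $\Sigma[0]$ is a homothety of the minimal $\Sigma_0$, and the first variation of mean curvature along the vector field $\dot{\mathcal{Z}}[0]$ yields $w_0 = c\,\mathcal{L}_0 u$ for a constant $c$ determined by $r[0]$ and the scaling of $\mathcal{L}_0$, where
\[
u \;=\; \dot{\mathcal{Z}}[0]\cdot\vec{\nu}_0 \;=\; \dot r[0]\,(\vec{X}\cdot\vec{\nu}_0)\;+\;r[0]\,\psi(y)\,(y\nu_x - x\nu_y).
\]
The first summand is the normal component of a Euclidean dilation and therefore lies in $\ker\mathcal{L}_0$ (dilations preserve minimality). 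The inner function $\tilde v := y\nu_x - x\nu_y$ of the second summand is the normal component of the Killing rotation about the $z$-axis and so also lies in $\ker\mathcal{L}_0$. Consequently $w_0$ is generated purely by the cut-off commutator $[\mathcal{L}_0,\psi]\tilde v = 2\langle\nabla\psi,\nabla\tilde v\rangle + \tilde v\,\Delta\psi$, supported in the transition band. Since $\vec e_x\cdot\vec{\nu}_0\in\ker\mathcal{L}_0$ (translation Killing field), Green's identity on the exhaustion $\{|x|,|y|\leq R\}$ of $\Sigma_0/\langle\sigma_0\rangle$ reduces the integral to a sum of flux terms at the four ends as $R\to\infty$. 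Equivalently, the Gauss map (conformal with factor $|A|^2/2$) transfers the pairing onto $S^2\setminus\{\pm\vec e_x,\pm\vec e_y\}$, where it becomes the Green-pairing of $(\Delta_{S^2}+2)\tilde u$ against the eigenfunction $x|_{S^2}\in\ker(\Delta_{S^2}+2)$ of Lemma~\ref{sphere_kernel}; a residue-type calculation at each puncture using the precise asymptotic form of $u$ on that end (linear growth where $\psi=1$; negligible contribution where $\psi=0$) produces the claimed total $8\pi$.

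The principal technical hurdle is this residue calculation in part~(3): one must identify the leading asymptotics of $u$ on each of the four ends of $\Sigma_0/\langle\sigma_0\rangle$, track all signs and scaling constants (in particular the factor $r[0]$, the scaling of $\mathcal{L}_0$ between $\Sigma_0$ and $r[0]\Sigma_0$, and the conformal factor of the Gauss map), and verify that the contributions from the four punctures combine to exactly $8\pi$. This non-zero value is what encodes the essential non-degeneracy allowing the $\theta$-parameter to geometrically generate the one-dimensional cokernel of $\mathcal{L}_{\Sigma_0}$ identified in Lemma~\ref{sphere_kernel}, and is the crucial input in the subsequent patching argument.
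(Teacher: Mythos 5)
Your proposal follows the paper's proof in all essentials: parts (1) and (2) are obtained, as in the paper, from the definition of $w$ together with Proposition~\ref{H_est} (you supply more of the intermediate reasoning, correctly), and for part (3) both you and the paper apply Green's identity to $u=\dot{\mathcal{Z}}[0]\cdot\vec{\nu}_{\Sigma_0}$, with $\mathcal{L}_{\Sigma_0}u=w_0$, paired against the Jacobi field $\vec{e}_x\cdot\vec{\nu}_{\Sigma_0}$ on an exhaustion and then evaluate the boundary flux at the four ends. The only step you defer is the flux evaluation itself, which the paper settles by recording the asymptotic $|\nabla u-2\vec{e}_y|\le Ce^{-s}$ (so each of the two $y$-ends, where $\vec{e}_x\cdot\vec{\nu}\to\pm1$, contributes $2\cdot 2\pi=4\pi$, while the $x$-ends, where $\vec{e}_x\cdot\vec{\nu}\to0$ and $\psi=0$, contribute nothing); your decomposition of $u$ into the dilation and rotation Jacobi fields makes exactly this asymptotic immediate, so the $8\pi$ follows from your setup.
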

\begin{proof}
(1) and (2) follow directly from Definition \ref{the_function_w}  and Proposition \ref{H_est}.  To see (3), set $S_c = \{\Sigma_0: s \leq c \} / \langle\sigma_0\rangle$. We then we have  
\[
\int_{S_c } w (\vec{e}_x \cdot \vec{\nu}_{\Sigma_0}) = 
\int_{S_c}  u \mathcal{L}_{\Sigma_0} (\vec{e}_x \cdot \vec{\nu}_{\Sigma_0})  + \int_{\partial S_c} \Big[(\vec{e}_x \cdot \vec{\nu}) (\nabla u \cdot \vec{\eta}) - u \vec{\eta}\cdot\nabla (\vec{e}_x \cdot \vec{\nu}) \Big]
\]
where $\nabla = \nabla_{\Sigma_0}$, $\vec{\eta}$ is the co-normal at the boundary of $S_c$, and $u = \frac{\partial}{\partial \theta} \mathcal{Z}[\theta]\big|_{\theta = 0} \cdot \vec{\nu}_{\Sigma_0}$, so that $\mathcal{L}_{\Sigma_0} u = w_0$. The claim then follows by taking $c$ to $\infty$ and noting that $| \nabla( \vec{e}_x \cdot \vec{\nu}_{\Sigma_0})(s, z)| \leq C e^{-s} $ , $|\vec{\eta} - \vec{e}_y| \leq C e^{-c}$, and $|(\nabla u(s, z) - 2\vec{e}_y)| \leq C e^{-s}$.
\end{proof}

By Proposition \ref{H_est}, the quantity $ E = H_\Sigma - \fracsm{1}{2} \tau^2 ( \vec{X} \cdot \vec{\nu}_\Sigma )$ and its variations under $\theta$ lie in the weighted H\"o{}lder spaces $C^{0, \alpha} (\Sigma, g, e^{- \gamma s})$. The symmetries of $\Sigma$ give that $E$ is  $G_\tau$-equivariant, and that its pull-back to $\Sigma_0$ by $\mathcal{Z}$ is $G_0$-equivariant.  For the remainder of this article, all functions defined on $\Sigma[\tau, \theta]$ are assumed to be invariant under the symmetry group $G_\tau$. 

\begin{proposition} \label{invertibility}
Given any $E \in \mathcal{C}^{0, \alpha} (\Sigma,  g, e^{-\gamma s}) $, there is a constant $b = b_E$ and a function $v = v_E$ such that
\begin{eqnarray} \notag
\mathcal{L}_\Sigma v &  = & E - b w, \\ \notag
v  & = & 0, \quad\text{ on }\quad \partial \Sigma,
\end{eqnarray}
\begin{equation} \notag
|b|,\:\|v: C^{2, \alpha}(\Sigma, g, e^{-\gamma s})\| \leq C \|E: C^{0, \alpha}\left(\Sigma , g,  e^{-\gamma s}\right)\|.
\end{equation}
Moreover, the pair $(v_E, b_E)$ depends continuously on the parameters $\tau$ and $\theta$ (see Remark (\ref{object_ident}))
\end{proposition}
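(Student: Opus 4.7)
The plan is to solve the equation perturbatively from the model Jacobi operator $\mathcal{L}^0_{\Sigma_0}=\Delta_{\Sigma_0}+|A_{\Sigma_0}|^2$ on Scherk's surface, using the Gauss map to analyze the model kernel and the function $w$ to kill the corresponding cokernel obstruction. Using the identification of Remark \ref{object_ident}, I transplant the problem to the model domain $\Sigma'_\tau := \{\Sigma_0 : s \leq 5\delta_s/\tau\}$, where the pulled-back operator takes the form $\widetilde{\mathcal{L}}_{\tau,\theta} = \mathcal{L}^0_{\Sigma_0} + P_{\tau,\theta}$. Here $P_{\tau,\theta}$ collects (i) the metric-and-curvature perturbation introduced by the bending map $\mathcal{Z}$, whose coefficients are of size $O(\tau)$ by the bounds of Proposition \ref{bending_map_properties}, and (ii) the Ornstein-Uhlenbeck drift $-\frac{1}{2}\tau^2(\vec{X}\cdot\nabla-\mathrm{Id})$, whose coefficient satisfies $\tau^2|\vec{X}|\leq C\tau$ on $\Sigma[\tau,\theta]$ since $|\vec{X}|\leq C\tau^{-1}$ there. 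Thus $\|P_{\tau,\theta}\|_{\mathrm{op}}\leq C\tau$ between the $e^{-\gamma s}$-weighted H\"older spaces of the proposition.

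The key step is the model analysis on Scherk. The Gauss map of $\Sigma_0/\langle\sigma_0\rangle$ is conformal onto the punctured hemisphere with factor $\frac{1}{2}|A_{\Sigma_0}|^2$, so $\mathcal{L}^0_{\Sigma_0}$ acquires the form $\frac{1}{2}|A|^2(\Delta_{S^2}+2)$ under the change of variables. By (\ref{sphere_identities}), $G_0$-equivariance corresponds to $\xi_0^*$-invariance and $\omega_0^*$-antiinvariance on $S^2$, and Lemma \ref{sphere_kernel} identifies the kernel as one-dimensional, spanned by the $x$-coordinate, whose pull-back to $\Sigma_0$ is $\vec{e}_x\cdot\vec{\nu}_{\Sigma_0}$. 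A Fredholm-alternative argument on the compact hemisphere, combined with the exponential decay $|A|^2\sim e^{-2s}$ at the four Scherk ends to match the prescribed weight, should yield: for any $G_0$-equivariant $F\in C^{0,\alpha}(\Sigma_0,e^{-\gamma s})$ with $\int F\,(\vec{e}_x\cdot\vec{\nu}_{\Sigma_0})\,d\mu=0$, a solution $v_0$ of $\mathcal{L}^0_{\Sigma_0}v_0=F$ with $\|v_0:C^{2,\alpha}(\Sigma_0,e^{-\gamma s})\|\leq C\|F\|$. To enforce the orthogonality on $E-bw$, define
\[
b_E := \Big(\int w\,(\vec{e}_x\cdot\vec{\nu}_{\Sigma_0})\,d\mu\Big)^{-1}\int E\,(\vec{e}_x\cdot\vec{\nu}_{\Sigma_0})\,d\mu,
\]
with integrals over $\Sigma/\sigma_\tau$. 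By Lemma \ref{w_properties}(3) the denominator equals $8\pi+O(\tau)$, hence is uniformly bounded away from zero for small $\tau$, and the exponential weight on $E$ together with boundedness of $\vec{e}_x\cdot\vec{\nu}$ gives $|b_E|\leq C\|E\|$.

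Finally, to pass to zero Dirichlet data on $\Sigma'_\tau$ for the perturbed operator: take the model solution $v_0$ on all of $\Sigma_0/\langle\sigma_0\rangle$ with right-hand side $E-b_Ew$; its restriction to $\partial\Sigma'_\tau$ has size $O(e^{-5\gamma\delta_s/\tau})$, exponentially small in $\tau$, and a cut-off correction enforces zero Dirichlet values at negligible cost to the equation. Then invert $\widetilde{\mathcal{L}}_{\tau,\theta}$ modulo $w$ by an iteration argument: formally, $\widetilde{\mathcal{L}}=\mathcal{L}^0(I+(\mathcal{L}^0)^{-1}P)$ with the $b$-correction iterated at each stage, and the $O(\tau)$ operator norm of $(\mathcal{L}^0)^{-1}P$ provides convergence for $\tau$ sufficiently small, delivering $v$ with the claimed weighted $C^{2,\alpha}$ bound. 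Continuous dependence of $(v_E,b_E)$ on $(\tau,\theta)$ is inherited from the continuous dependence of $\widetilde{\mathcal{L}}_{\tau,\theta}$ and $w$ on these parameters. The \emph{main obstacle} I anticipate is the middle step: establishing the $e^{-\gamma s}$-weighted invertibility of $\mathcal{L}^0_{\Sigma_0}$ modulo its one-dimensional kernel with $\tau$-independent constants, which hinges on the precise matching between the conformal-factor decay $|A|^2\sim e^{-2s}$ at the four Scherk ends and the prescribed exponential weight.
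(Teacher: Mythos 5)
Your overall architecture (model problem on Scherk's surface via the Gauss map, cokernel killed by $w$ using Lemma \ref{w_properties}(3), then zero Dirichlet data on the truncated piece and a perturbation/iteration for general $(\tau,\theta)$) is exactly the paper's, and the formula you give for $b_E$ is equivalent to the paper's orthogonality normalization. However, the step you yourself flag as the "main obstacle" is genuinely unresolved, and the difficulty is not where you locate it. Two specific ideas are missing. First, you cannot push a general $E\in C^{0,\alpha}(\Sigma_0,e^{-\gamma s})$ forward to the sphere directly: the transplanted function is $\bar E=\vec\nu_{*}(E/|A_{\Sigma_0}|^2)$, and since $|A_{\Sigma_0}|^2\sim e^{-2s}$ while $E$ only decays like $e^{-\gamma s}$ with $\gamma<1$, the quotient blows up like $e^{(2-\gamma)s}$ at the ends and $\bar E\notin L^2(\mathbb S^2)$. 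The paper first reduces to compactly supported $E$ by solving the equation on the four ends separately (where $\mathcal{L}_{\Sigma_0}$ is a perturbation of the flat half-cylinder Laplacian, Proposition \ref{laplacian_on_cylinders}) and cutting off; your "decay matching" remark does not substitute for this reduction.

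Second, even for compactly supported $E$, the Fredholm alternative on the sphere produces a solution $v$ that is merely \emph{bounded} near the punctures $(\pm1,0,0)$, $(0,\pm1,0)$; pulled back to $\Sigma_0$ this gives a solution converging to the \emph{constants} $v(\pm1,0,0)$, $v(0,\pm1,0)$ along the four ends --- no decay at all, let alone $e^{-\gamma s}$. The decisive point is that the $G_0$-equivariance forces $v(0,\pm1,0)=0$ and $v(1,0,0)=-v(-1,0,0)$, so that subtracting the single kernel element $v(1,0,0)\,x$ kills the limiting values at all four punctures simultaneously; only after this normalization does the perturbation-of-the-cylinder argument upgrade boundedness to exponential decay. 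Without this your solution does not lie in $C^{2,\alpha}(\Sigma_0,e^{-\gamma s})$. A minor further point: the operator-norm smallness of your $P_{\tau,\theta}$ is not $O(\tau)$; the metric distortion of the bending map over the region $s\leq 5\delta_s\tau^{-1}$ is of size controlled by $\delta_s$ (not $\tau$), so the paper arranges smallness by first choosing $\delta_s$ small and then $a$ large and $\tau$ small --- the order of quantifiers matters for the uniformity of the constant $C$.
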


We prove first Proposition \ref{invertibility} in the limiting case $\tau = 0$, $\theta = 0$, and handle the general case as a perturbation. 

\begin{proposition} \label{unperturbed_invertibility}
Given any $E \in \mathcal{C}^{0, \alpha} (\Sigma_0, g_0, e^{-\gamma s}) $, there is a constant $b = b_E$ and a function $v = v_E$ such that
\begin{equation} \label{scherk_eq}
\mathcal{L}_{\Sigma_0} v = E - b w_0,
\end{equation}
and such that 
\begin{equation} \notag
|b|,\: \|v: \mathcal{C}^{2, \alpha}(\Sigma_0,g_0,  e^{-\gamma s})\| \leq C \|E: \mathcal{C}^{0, \alpha}(\Sigma_0, g_0,  e^{-\gamma s})\|
\end{equation}
\end{proposition}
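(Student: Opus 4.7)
The plan is to reduce the equation to a spectral problem on $S^2$ via the Gauss map and then invert modulo a one-dimensional obstruction that the function $w_0$ is precisely tuned to absorb. Since $\Sigma_0$ is minimal, its Gauss map restricts to a conformal diffeomorphism
\[
\vec\nu : \Sigma_0 / \langle \sigma_0 \rangle \longrightarrow \{S^2 : x \geq 0\} \setminus \{(\pm 1, 0, 0), (0, \pm 1, 0)\},
\]
with conformal factor $\tfrac{1}{2} |A|^2$, so that (after identifying functions with their push-forwards) $\mathcal{L}_{\Sigma_0} = \tfrac{1}{2} |A|^2 (\Delta_{S^2} + 2)$, and \eqref{scherk_eq} is equivalent to $(\Delta_{S^2}+2)v = \tfrac{2}{|A|^2}(E - b w_0)$. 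By the intertwining relations \eqref{sphere_identities}, the $G_0$-equivariance condition on $v$ pushes forward, after extension by $\omega_0^*$-reflection across $\{x=0\}$, to $\xi_0^*$-invariance together with $\omega_0^*$-anti-invariance on all of $S^2$. Within this symmetry class Lemma~\ref{sphere_kernel} identifies the kernel of $\Delta_{S^2}+2$ as the line through the restriction of $x = \vec e_x \cdot \vec\nu$.

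The central analytic step is to establish weighted Fredholm theory for $\mathcal{L}_{\Sigma_0}$ on the symmetric subspace of $C^{2,\alpha}(\Sigma_0, g_0, e^{-\gamma s})$. On each of the four flat ends, $\Sigma_0$ is an exponentially small graph over an asymptotic plane, so $|A|^2 = O(e^{-2s})$ and $\mathcal{L}_{\Sigma_0}$ differs from the Euclidean Laplacian on the half-cylinder $\{s\geq s_0\}/(z \sim z+2\pi)$ only by an $O(e^{-2s})$ perturbation. Separating variables, the only non-decaying bounded mode on such an end is the $z$-independent one; the $G_0$-equivariance (in particular the $\omega_0$-anti-invariance, which under the Gauss map excludes the relevant constant mode) rules this out, so all admissible Fourier modes decay at rate at least $e^{-s}$, covering any $\gamma \in (0,1)$. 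A barrier argument, combined with the local Schauder estimates of the norms (\ref{NicosNorms}), then renders $\mathcal{L}_{\Sigma_0}$ Fredholm with trivial kernel (no Jacobi field decays on all four ends) and, by self-adjointness, one-dimensional cokernel represented via the $L^2$-pairing with the bounded Jacobi field $\vec e_x \cdot \vec\nu$.

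Given this, solvability of $\mathcal{L}_{\Sigma_0} v = E - b w_0$ reduces to the single orthogonality condition
\[
\int_{\Sigma_0/\langle\sigma_0\rangle} (\vec e_x \cdot \vec\nu)(E - b w_0)\, d\mu_{\Sigma_0} = 0,
\]
which in view of item (3) of Lemma~\ref{w_properties} uniquely determines
\[
b \;=\; \frac{1}{8\pi} \int_{\Sigma_0/\langle\sigma_0\rangle} (\vec e_x \cdot \vec\nu)\, E\, d\mu_{\Sigma_0}.
\]
This $b$ depends continuously on $E$ with $|b| \leq C \|E : C^{0,\alpha}(\Sigma_0, g_0, e^{-\gamma s})\|$, since $\vec e_x \cdot \vec\nu$ is uniformly bounded and $e^{-\gamma s}$ is integrable on the quotient for any $\gamma > 0$. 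The Fredholm inverse then produces a unique $v$ in the orthogonal complement of the cokernel satisfying the claimed weighted $C^{2,\alpha}$-estimate with norm linearly controlled by $\|E\|$.

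The main obstacle is the weighted Fredholm step: one needs a precise asymptotic (indicial-root) analysis at each of the four flat ends to confirm that the $G_0$-symmetry kills the obstruction mode and that the remaining modes supply enough decay to absorb the weight $e^{-\gamma s}$ for every $\gamma \in (0,1)$. Once this is justified, the rest is a clean Fredholm dichotomy in the symmetric class, made quantitative through the explicit normalization $\int w_0 (\vec e_x \cdot \vec\nu)\, d\mu_{\Sigma_0} = 8\pi$ from Lemma~\ref{w_properties}(3).
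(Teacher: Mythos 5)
Your overall strategy (conjugate by the Gauss map, identify the one-dimensional obstruction via Lemma~\ref{sphere_kernel}, fix $b$ by the orthogonality condition normalized through Lemma~\ref{w_properties}(3)) is the right skeleton and is close to the paper's. But the step on which everything rests --- the assertion that $G_0$-equivariance excludes the non-decaying constant mode on \emph{every} end, so that solutions automatically lie in the $e^{-\gamma s}$-weighted space --- is false, and the error is visible inside your own write-up. The involution $\omega_0$ preserves the two ends of $\Sigma_0$ corresponding to the punctures $(0,\pm 1,0)$, and there the anti-invariance does force the asymptotic constant to vanish. But $\omega_0$ \emph{swaps} the two ends corresponding to the punctures $(\pm 1,0,0)$; on that pair the equivariance only forces the two asymptotic constants to be opposite, $v(1,0,0)=-v(-1,0,0)$, not zero. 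The function $\vec{e}_x\cdot\vec{\nu}_{\Sigma_0}=\vec{\nu}_{\Sigma_0}^{*}x$ is itself a $G_0$-equivariant \emph{bounded, non-decaying} Jacobi field, asymptotic to $\pm 1$ on exactly those two ends; it is the cokernel representative you invoke and simultaneously a counterexample to your decay claim. If your mode analysis were correct, the cokernel would be trivial and $b$ would be unnecessary --- contradicting the statement being proved. As written, the argument is internally inconsistent.

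The missing idea is precisely how the paper converts the a priori non-decaying solution into a decaying one: after solving $(\Delta_{\mathbb{S}^2}+2)v=\bar{E}-b\bar{w}$ on the sphere, the obstruction to decay of $\vec{\nu}^{*}v$ is the pair of values $v(\pm 1,0,0)$, which by symmetry are opposite and can therefore be cancelled \emph{simultaneously} by subtracting the single kernel element $v(1,0,0)\,x$; the corrected function $u=\vec{\nu}^{*}(v-v(1,0,0)x)$ then decays on all four ends and solves the same equation since $x\in\ker(\Delta_{\mathbb{S}^2}+2)$. (The paper also first reduces to compactly supported $E$ by solving on the ends via Proposition~\ref{laplacian_on_cylinders} and cutting off, since the pushforward $E/|A_{\Sigma_0}|^2$ need not lie in $L^2(\mathbb{S}^2)$ for general decaying $E$ --- another point your argument glosses over.) Your Fredholm framework could in principle be repaired: the cokernel in the $e^{-\gamma s}$-weighted symmetric class is indeed one-dimensional, but the correct count comes from duality with the \emph{bounded} symmetric Jacobi fields, of which $\vec{e}_x\cdot\vec{\nu}$ is one, not from the claim that no such fields exist. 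Until that is fixed, the weighted $C^{2,\alpha}$ estimate on $v$ is not established.
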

\begin{proof}
Let $E \in C^{0, \alpha} (\Sigma_0, e^{- \gamma s})$ be a given $G_0$-equivariant function, and assume for the moment that $E$ is supported on $\{\Sigma_0 : s \leq a \}$ where $a>1$ is a large constant.  Recall that the Gauss map 
\begin{equation} \notag
\nu_{\Sigma_0}: \Sigma_0 \rightarrow \mathbb{S}^2 
\end{equation}
is a conformal covering which descends to a diffeomorphism from $\Sigma_0 / \sigma_0$ onto the punctured sphere $\mathbb{S}^2 \setminus \{(\pm 1, 0, 0), (0, \pm 1, 0) \}$, with the four punctures corresponding to the four asymptotic ends of $\Sigma_0$.  The function $\bar{E} = \vec{\nu}_{\Sigma_0 *} (E / |A_{\Sigma_0}|^2) \in L^2 (\mathbb{S}^2, g_{\mathbb{S}^2})$ is then well-defined and satisfies 
\begin{equation}  \notag
\| \bar{E} \|_{L^2 (\mathbb{S}^2)} \leq C \| E \|_{C^{0, \alpha} (\Sigma_0, e^{- \gamma s})}
\end{equation}
where the constant $C$ depends on $a$. It is easily verified that since $E$ is $G_0$-equivariant, the function $\bar{E}$ satisfies the identities (\ref{sphere_identities}), which then give that $\bar{E}$ is $L^2$ orthogonal to the functions $y$ and $z$ on $\mathbb{S}^2$ from Lemma \ref{sphere_kernel}. Now, (3) in Lemma \ref{w_properties} gives that 
\begin{equation} \notag
\int_{\mathbb{S}^2} \bar{w} x = 8 \pi,
\end{equation}
where $\bar{w} = \vec{\nu}_{\Sigma_0 *} (w_0 / |A_{\Sigma}|^2)$. Thus, we may find a constant $b$ such that $\bar{E} - b \bar{w} $ is $L^2$-orthogonal to $x$. We then get a function $v: \mathbb{S}^2 \rightarrow \mathbb{R}$ satisfying 
\begin{equation} \notag
(\Delta_{\mathbb{S}^2} + 2 ) v = \bar{E} - \bar{w}
\end{equation}
and the identities (\ref{sphere_identities}),  from which we conclude that $v(1, 0, 0) = - v (-1, 0, 0)$, while $v (0, \pm 1, 0 ) = 0$.  Define then the $G_0$-equivariant function $u: \Sigma_0 \rightarrow \mathbb{R}$ by
\begin{equation} \notag
u = \vec{\nu}_{\Sigma_0}^* (v - v(1, 0, 0) x).
\end{equation}
We then get immediately that $u$  satisfies 
\begin{equation} \notag
\mathcal{L}_{\Sigma_0} u = E - b w_0.
\end{equation}
That $u$ has the appropriate decay, i.e. lies in the space $C^{2, \alpha} (\Sigma_0, g_0, e^{- \gamma s})$, follows by observing that the operator $\mathcal{L}_{\Sigma_0}$ is asymptotically a perturbation of the Laplace operator on the flat cylinder $\mathcal{C}$, for which the decay estimates hold. To conclude the proof, note that we may reduce to the case that $E$ is supported in $\{\Sigma_0 : s \leq a \}$ as follows:  Recall  that each component of $\{\Sigma_0 : s \geq a\}$ is given by the graph of a small function $f: H^+ \rightarrow \mathbb{R}$ with $f$ satisfying (\ref{f_estimate}). For $a$ sufficiently large, the operator $\mathcal{L}_{\Sigma_0}$ on $\{\Sigma_0 : s \geq a - 1\}$  is then a perturbation of the Laplace operator $\Delta_{H^+}$ on the flat half cylinder $H^+$. Proposition \ref{laplacian_on_cylinders} then gives a function $u'$ on  $\{ \Sigma_0 : s \geq a \}$ satisfying 
\begin{eqnarray} \notag
\mathcal{L}_{\Sigma_0} u' & = & E, \\ \notag
u' & = & c,\quad\text{ on }\quad \partial \{\Sigma_0 : s \geq a - 1\}.
\end{eqnarray}
for a constant $c$ with $|c| \leq C \| E\|$. Define the smooth cutoff function $\psi = \psi[a - 1, a]$. We then get that $\psi u'$ is defined on all of $\Sigma_0$ and satisfies
\begin{equation} \notag
\mathcal{L}_{\Sigma_0} (\psi u' ) = \psi{E} + \mathcal{E}
\end{equation} 
where $\mathcal{E}$ is an error term introduced by smoothing out $u'$ on the boundary of $\{ \Sigma_0: s \geq a -1\}$. The function $F = (1- \psi)E  - \mathcal{E}$ is then supported on $\{ \Sigma_0: s \leq a\}$. This concludes the proof.
\end{proof}
\begin{remark}
The reader will note that the highly symmetric nature of our construction, in contrast with the general situation and in particular the construction in   \cite{Ka97}, allow us to obtain a solution with the appropriate decay with a single parameter.

In particular, the function $v$ satisfying the equivalent problem on the sphere has opposite values at $(\pm 1, 0, 0)$, which allows simultaneous cancellation of both values by a single multiple of the kernel element $x$. 
\end{remark}
\begin{corollary}
Given 
\begin{equation} \notag
E \in C^{0, \alpha}(\{\Sigma_0: s  \leq 5 \delta_s \tau^{-1} \},  g_0, e^{- \gamma s}),
\end{equation}
there is a  constant $b \in \mathbb{R}$ and a function
\begin{equation} \notag
v \in C^{0, \alpha}(\{\Sigma_0: s  \leq 5 \delta_s \tau^{-1} \}, g_0, e^{- \gamma s})
\end{equation}
such that 
\begin{eqnarray} \notag
\mathcal{L}_{\Sigma_0} v & = & E - b w_0 \text{ on } \{\Sigma_0: s \leq 5 \delta_s \tau^{-1} \} \\ \notag
v & = & 0 \text{ on } \partial  \{\Sigma_0: s \leq 5 \delta_s \tau^{-1} \}
\end{eqnarray}
with the bounds 
\begin{equation} \notag
\|v : C^{2, \alpha}(\{\Sigma_0: s  \leq 5 \delta_s \tau^{-1} \} , g_0, e^{- \gamma s}) \| \leq C \|E : C^{0, \alpha}(\{\Sigma_0: s  \leq 5 \delta_s \tau^{-1} \} , g_0, e^{- \gamma s} )\|
\end{equation}
and
\begin{equation} \notag
|b| \leq C \|E : C^{0, \alpha}(\{\Sigma_0: s  \leq 5 \delta_s \tau^{-1} \} , g_0, e^{- \gamma s} )\|.
\end{equation}
\end{corollary}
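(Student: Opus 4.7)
The strategy is to reduce the truncated problem to the already-solved full-surface Proposition \ref{unperturbed_invertibility} via a trivial zero-extension, then to correct the resulting nonzero (but exponentially small) boundary values by a local perturbative argument on the Scherk ends.

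First, I would extend the given $E$ by zero to a function $\tilde{E}$ on all of $\Sigma_0$. Since $E \in C^{0,\alpha}(\{\Sigma_0 : s \leq 5\delta_s\tau^{-1}\},g_0,e^{-\gamma s})$, multiplying $E$ by a cutoff $\psi[5\delta_s\tau^{-1}-1,5\delta_s\tau^{-1}-1/2]$ before extending by zero preserves Hölder regularity at the cost of a constant factor in the weighted norm, while the weight $e^{-\gamma s}$ already makes the truncation uniform. Applying Proposition \ref{unperturbed_invertibility} to $\tilde{E}$ yields $(v',b)$ with $\mathcal{L}_{\Sigma_0} v' = \tilde E - b w_0$ on $\Sigma_0$ and $|b|, \|v':C^{2,\alpha}(\Sigma_0,g_0,e^{-\gamma s})\| \leq C \|E\|$. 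In particular the $C^{2,\alpha}$ norm of $v'$ restricted to any ball near $\partial\{s \leq 5\delta_s\tau^{-1}\}$ is bounded by $C\|E\|\, e^{-5\gamma\delta_s/\tau}$, which is exponentially small as $\tau \to 0$.

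Next, I would construct a correction $h$, defined on $\{\Sigma_0 : s \leq 5\delta_s\tau^{-1}\}$, satisfying $\mathcal{L}_{\Sigma_0} h = 0$ (up to exponentially small error) and $h = v'$ on $\partial\{s = 5\delta_s\tau^{-1}\}$. Because the four ends of $\Sigma_0$ are at infinity asymptotic to flat half-cylinders $H^+/\sigma_0$, the operator $\mathcal{L}_{\Sigma_0}$ on $\{s \geq 4\delta_s\tau^{-1}\}$ is, for $\tau$ small, a perturbation of the standard cylinder Laplacian. The boundary-value problem for $\Delta_{H^+}$ with data $v'(5\delta_s\tau^{-1},\cdot)$ at one end and Dirichlet zero at the inner cap $\{s = 4\delta_s\tau^{-1}\}$ is solved by separation of variables (this is the content of the Proposition \ref{laplacian_on_cylinders} invoked in the proof of Proposition \ref{unperturbed_invertibility}), producing a solution whose $C^{2,\alpha}$ norm is controlled by the boundary data, hence of size $C\|E\|\,e^{-5\gamma\delta_s/\tau}$. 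Multiplying this cylinder-solution by a cutoff supported in the collar $\{4\delta_s\tau^{-1} \leq s \leq 5\delta_s\tau^{-1}\}$ and absorbing the commutator error (exponentially small, hence negligible in weighted norm) through a Neumann series iteration of the same construction gives $h$ with $\mathcal{L}_{\Sigma_0} h = 0$, $h|_{\partial} = v'|_{\partial}$, and $\|h : C^{2,\alpha}(\{s\leq 5\delta_s\tau^{-1}\},g_0,e^{-\gamma s})\| \leq C \|E\|$. Then $v := v' - h$ solves the desired Dirichlet problem with the stated bounds.

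\textbf{Main obstacle.} The non-routine step is the construction of the boundary correction $h$ with a weighted-norm bound controlled by the (exponentially small) boundary data rather than by $\|E\|$ alone. This rests on the fact that on the Scherk ends the operator is very close to a flat cylinder Laplacian, so the harmonic extension of exponentially small boundary data is itself exponentially small in the anisotropic weight. The Neumann-series absorption of the cutoff commutator is the mechanism that turns the approximate harmonic extension into an exact one, and it converges precisely because the smallness is measured against the weight $e^{-\gamma s}$ that decays faster than the error scale $e^{-5\gamma\delta_s/\tau}$.
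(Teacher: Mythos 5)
Your overall strategy coincides with the paper's: solve on all of $\Sigma_0$ via Proposition \ref{unperturbed_invertibility}, then kill the exponentially small boundary values by solving a homogeneous Dirichlet problem on the near-cylindrical outer region (Proposition \ref{laplacian_on_cylinders}), cut off, and absorb the commutator error by iteration. The correction step is sound: the boundary data are of size $e^{-5\gamma\delta_s/\tau}\|E\|$, the near-harmonic extension inherits this size, and the cut-off commutator is small in the $e^{-\gamma s}$-weighted norm, so the Neumann series converges.

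The gap is in the first step. You replace $E$ by $\tilde E=\chi E$ with $\chi$ a cutoff vanishing near $\partial\{\Sigma_0: s\leq 5\delta_s\tau^{-1}\}$ (as an aside, $\psi[5\delta_s\tau^{-1}-1,\,5\delta_s\tau^{-1}-1/2]$ is oriented the wrong way for this; take $\chi=1-\psi$). Then $v'$ solves $\mathcal{L}_{\Sigma_0}v'=\chi E-bw_0$, and since $h$ is $\mathcal{L}$-harmonic, your $v=v'-h$ solves the equation with source $\chi E$, not $E$. The discrepancy $(1-\chi)E$ is supported in the outer collar of unit width, and its $C^{0,\alpha}(\cdot,g_0,e^{-\gamma s})$-norm is comparable to $\|E\|$: the weighted norm measures $e^{\gamma s}|E|$, which on that collar is $O(\|E\|)$, not $O(e^{-5\gamma\delta_s/\tau}\|E\|)$, so the weight does not make the truncation "uniform" in the sense you need. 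Your iteration only absorbs the cutoff commutator coming from $h$; feeding $(1-\chi)E$ back into the same scheme would reproduce a remainder of undiminished size at every step, so it cannot converge to an exact solution. Two fixes: either use a genuine norm-bounded extension operator (reflection/Whitney across the boundary circle) so that $\tilde E=E$ on the truncated piece, or first solve $\mathcal{L}u'=(1-\chi)E$ on the near-cylindrical end with zero Dirichlet data at $s=5\delta_s\tau^{-1}$ via Proposition \ref{laplacian_on_cylinders} --- this is exactly the reduction used inside the paper's proof of Proposition \ref{unperturbed_invertibility} --- subtract it off, and then run your argument on the remaining source, which is supported away from the outer boundary.
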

\begin{proof}
First, we apply Proposition \ref{unperturbed_invertibility} to obtain a function $v_1$ satisfying (\ref{scherk_eq}). Now, note that for a large constant $a > 0$, the operator $\mathcal{L}_{\Sigma_0} = \Delta_{\Sigma_0} + |A_{\Sigma_0}|^2$ on $\{\Sigma_0: a \leq s \leq 5 \delta_s \tau^{-1} \}$ is a perturbation of the Laplacian on a long cylinder. This allows us (see Proposition \ref{laplacian_on_cylinders}) to solve the following  Dirichlet problem, with $ \partial_a$ and $\partial_\tau$ denoting the boundary components of $\{\Sigma_0: a \leq s \leq 5 \delta_s \tau^{-1} \}$ in the obvious way: 
\begin{eqnarray}
\mathcal{L}_{\Sigma_0} v_2 & = & 0 \\ \notag
v_2  & = & v_1 + c_1 \text{ on } \partial_\tau \\ \notag
v_2 & = & 0 \text{ on } \partial _a
\end{eqnarray}
 with the bounds
\begin{eqnarray} \notag
|c_1| , \|v_2: C^{2, \alpha}(\{\Sigma_0 : a \leq s \leq 5 \delta_s \tau^{- 1} \}, g_0)\| & \leq & C \|v_1: C^{2, \alpha} (\partial_\tau, g_0) \| \\ \notag
& \leq & C e^{- 5 \gamma \delta_s \tau^{-1}} \|E : C^{0, \alpha}(\{\Sigma_0: s  \leq 5 \delta_s \tau^{-1} \} , g_0, e^{- \gamma s} )\|.
\end{eqnarray}
The function $ v = v_1 -  \psi[a, a + 1] (v_2 - c_1)$ then solves
\begin{eqnarray} \notag
\mathcal{L}_{\Sigma_0} v  & = & E - bw_0 + \mathcal{E} \text { on } \{\Sigma_0: s \leq 5 \delta_s \tau^{-1}\} \\ \notag
v  & = & 0 \text{ on } \partial \{\Sigma_0: s \leq 5 \delta_s \tau^{-1} \}.
\end{eqnarray}
for an error term $\mathcal{E}$ and has the required bounds on the norm,  and by taking $\tau$ sufficiently small and using that $|A_{\Sigma_0} |^2 < C e^{-s}$ (a consequence of (\ref{f_estimate})) we get that
\begin{equation} \notag
\|\mathcal{E} : C^{0, \alpha}(\{\Sigma_0: s  \leq 5 \delta_s \tau^{-1} \} , g_0, e^{- \gamma s} )\| \leq 1/2 \|E : C^{0, \alpha}(\{\Sigma_0: s  \leq 5 \delta_s \tau^{-1} \} , g_0, e^{- \gamma s} )\|.
\end{equation}
We then iterate this process to obtain an exact solution.
\end{proof}

We now prove Proposition \ref{invertibility} in full generality.  
\begin{proof}
Recall that $\mathcal{Z} \circ \varphi_\tau: \{\Sigma_0 : s \leq 5 \delta_s / \tau \} \rightarrow \Sigma$ is a diffeomorphism. By referring to the derivative bounds on the maps $\mathcal{Z}[\tau, \theta]$ recorded in Proposition \ref{bending_map_properties} it is clear that we can arrange so that  
\begin{equation} \notag
\| g_{\Sigma_0}- \mathcal{(Z \circ \varphi_\tau)}^* g_\Sigma: C^{2, \alpha}(\Sigma_0, g_0) \| < \epsilon \end{equation}
by choosing the constant $\delta_s$ sufficiently small for arbitrary positive $\epsilon$.
Now, by choosing $a$ sufficiently large and $\tau$ sufficiently small, we can arrange that 
\begin{equation}
\mathcal{(Z\circ \varphi_\tau)}^* |A_\Sigma|^2, |A_{\Sigma_0}|^2 < \epsilon
\end{equation}
on $\{\Sigma_0 : a \leq s \leq 5 \delta_s / \tau \}$. It follows that the operator norm of $\mathcal{(Z \circ \varphi_\tau)}^*\mathcal{L}_\Sigma -  \mathcal{L}_{\Sigma_0}:C^{2, \alpha} (\Sigma_0) \rightarrow C^{0, \alpha} (\Sigma_0) $ can be made arbitrarily small. The proposition then follows by formally treating $\mathcal{(Z\circ \varphi_\tau)}^*\mathcal{L}_\Sigma$ as a perturbation of $\mathcal{L}_{\Sigma_0}$. 

 \end{proof}

\begin{lemma}\label{H_decomp}
For any $\gamma\in(0,1)$ there exists $C=C(\gamma)$ such that
\begin{align*}
&\left\|H_\Sigma-\fracsm{1}{2}\tau^2\vec{X}\cdot\vec{v}_\Sigma-\theta w :C^{0,\alpha}(\Sigma, g_{\Sigma},e^{-\gamma s})\right\|\\
&\quad\leq C(\tau+|\theta|^2),
\end{align*}
where $H_\Sigma$ is the mean curvature of $\Sigma$.
\end{lemma}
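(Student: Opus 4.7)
The plan is to Taylor expand the quantity $H_\Sigma - \fracsm{1}{2}\tau^2 \vec{X}\cdot\vec{\nu}_\Sigma$ around $\theta=0$ via the fundamental theorem of calculus, and then match the linear-in-$\theta$ term against $\theta w$ modulo an $O(\tau) + O(\theta^2)$ error using Proposition \ref{H_est} and Lemma \ref{w_properties}.

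Concretely, set $F(\tau,\theta) := H_\Sigma - \fracsm{1}{2}\tau^2 \vec{X}\cdot\vec{\nu}_\Sigma$, identified via the diffeomorphism $\mathcal{Z}[\tau,\theta]\circ\varphi_\tau$ (see Remark \ref{object_ident}) with a function on $\{\Sigma_0: s\leq 5\delta_s/\tau\}$. Since the integrand $w(\tau,\theta)$ is constant in the dummy variable, one has the algebraic identity
\begin{align*}
F(\tau,\theta) - \theta w(\tau,\theta) &= F(\tau,0) + \int_0^\theta \bigl[F_\theta(\tau,t) - w(\tau,t)\bigr]\,dt \\
&\qquad + \int_0^\theta \bigl[w(\tau,t) - w(\tau,\theta)\bigr]\,dt,
\end{align*}
which reduces the proof to three pointwise estimates in the norm $C^{0,\alpha}(\Sigma,g_\Sigma,e^{-\gamma s})$.

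First, $\|F(\tau,0)\| \leq C\tau$: on the end $\{s\geq 1\}$ this is Proposition \ref{H_est}, while on the complementary region $\{s\leq 1\}$ (a set of uniformly bounded geometry on which the weight $e^{-\gamma s}$ is bounded above and below by positive constants) the same $O(\tau)$ bound is read off directly from the $s \leq 3\delta_s/\tau$ case analysis in the proof of Proposition \ref{H_est}, since at $\theta = 0$ the remainder $\mathcal{R}$ there vanishes identically at $\tau = 0$. Second, $\|F_\theta(\tau,t) - w(\tau,t)\| \leq C\tau$ uniformly for $|t|\leq \delta_\theta$, which is precisely Lemma \ref{w_properties}(2). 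Third, $\|w(\tau,t) - w(\tau,\theta)\|_{C^{0,\alpha}} \leq C|t-\theta|$: since $w$ is compactly supported in $\{s\leq 1\}$ by Lemma \ref{w_properties}(1), this is a $\theta$-Lipschitz estimate on a region of bounded geometry, and follows from the joint smoothness in $(\tau,\theta)$ of $\mathcal{K}[\theta]$ and $\mathcal{Z}[\tau,\theta]$ recorded in Propositions \ref{shrinker_caps} and \ref{bending_map_properties}.

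Integrating, the first integral contributes $O(|\theta|\tau)\leq O(\tau)$ (using $|\theta|\leq\delta_\theta$ bounded) and the second contributes $O(\theta^2)$, which combined with the zeroth-order bound on $F(\tau,0)$ yields $C(\tau + |\theta|^2)$. The main point of care is the third estimate, as it amounts to a uniform $C^{0,\alpha}$-bound on the second $\theta$-derivative of $H_{\Sigma[\theta]}$; fortunately the requisite $C^2$ dependence in $\theta$ is already built into the smooth parameter families constructed in Section \ref{initial_surfaces}, so the only real work is the bookkeeping of where that smoothness is actually invoked.
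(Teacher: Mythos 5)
Your proof is correct and matches the paper's intended argument: the paper's own proof is the single sentence ``this is a consequence of the smooth dependence of the surfaces $\Sigma$ on the parameters $\theta,\tau$ and the definition of $w$,'' and your Taylor expansion in $\theta$ --- bounding $F(\tau,0)$ by $O(\tau)$, the linear discrepancy $F_\theta - w$ by Lemma \ref{w_properties}(2), and the Lipschitz variation of $w$ on its compact support to produce the $O(|\theta|^2)$ term --- is exactly that argument written out. The only loose point is attributing the $O(\tau)$ bound on $\{s\le 1\}$ to the $s\le 3\delta_s/\tau$ case of Proposition \ref{H_est} (which parametrizes $\Sigma$ as a graph and so does not literally cover the non-graphical core), but the same smooth-dependence reasoning, with $F(0,0)=0$ since $\Sigma[0,0]=\Sigma_0$ is minimal, gives it directly there.
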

\begin{proof}
This is a consequence of the smooth dependence of the surfaces $\Sigma$ on the parameters $\theta, \tau$ and the definition of $w$.
\end{proof}

\section{An exterior linear problem of Ornstein-Uhlenbeck type}

On a flat plane $\mathcal{P}$ through the origin, with the induced standard Euclidean metric, the Dirichlet problem for the linearized operator $\mathcal{L}_\mathcal{P}$ in (\ref{SSEq_linearization}) at unit scale becomes:
\beq\label{EqPlane}
\begin{cases}
&\mathcal{L}_\mathcal{P}u=\Delta u-\fracsm{1}{2}\big(\vec{X}\cdot\nabla u-u\big)=E,\\
&u_{\mid\partial\Omega}=0.
\end{cases}
\eeq
for $u:\Omega\to\Reals$, where the domain $\Omega = \Reals^2\setminus B_R(0)$ is the exterior of a disk with radius $R\simeq 2$. The Laplacian and gradient are taken with respect to the standard Euclidean metric on the plane. The function $E$ is implicitly assumed to be $G_\tau$-equivariant.

The operator $\mathcal{L}_\mathcal{P}$ is of Ornstein-Uhlenbeck type (such operators are related to Brownian motion and number operators in quantum mechanics). It is of course clear that the local theory for this equation is classic, using for example standard Schauder estimates. On the non-compact exterior domain however, with such fast growth on the gradient term, there is generally no reasonable global elliptic theory available (see for example the counterexamples \cite{Pr}) and it is not a priori clear even what spaces to study the problem in. There exists in fact a vast literature on Ornstein-Uhlenbeck operators for various restrictive assumptions on the coefficients and corresponding choices of function spaces (see for example \cite{CV} and \cite{DL}), but since remarkably there is nothing in the literature that is adequate for our construction, we must develop our theory from scratch.

Firstly, note that the connection with the stability operator as a minimal surface in the Gaussian metric (see (\ref{StabPlane}) in the Appendix), is via the following conjugation identity,
\beq\label{Harmonic}
\mathcal{L}_\mathcal{P}u=\Delta u-\fracsm{1}{2}(\vec{X}\cdot\nabla-1)u=e^{|x|^2/8}\Big(\Delta -\frac{|x|^2}{16}+1\Big)e^{-|x|^2/8}u,
\eeq
where the exponential functions act by multiplication.

The operator in the parentheses in (\ref{Harmonic}) is of course nothing but the Hamilton operator for the two-dimensional quantum harmonic oscillator, plus a constant. Rescaling coordinates, it has the expression
\beq
\hat{H}=\fracsm{1}{2}\Delta-\fracsm{1}{2}|x|^2+2.
\eeq

This connection to the harmonic oscillator turns out to be about as misleading as it is helpful, for as we will see below, it is certainly not a natural point of departure for our applications, because of the involved conjugation with the Gaussian densities.

We get however from (\ref{Harmonic}) the following elementary lemma. The notation $H^s(\Reals^2)$ refers to the Sobolev space of functions with $s$ derivatives in $L^2(\Reals^2)$.

\begin{lemma}
Given $G_\tau$-equivariant $E\in e^{|x|^2/8}L^2(\Reals^2)$ and assuming $\tau\leq\fracsm{1}{3}$, there is a unique $G_\tau$-equivariant $u\in e^{|x|^2/8} H^2(\Reals^2)$ such that $\mathcal{L}_\mathcal{P}u=E$.

Furthermore, there is a uniform constant $C>0$ such that
\beq\label{LinearGrowth}
|u(x)|\leq C\Big(\sup_{x\in\Reals^2}(1+|x|)|E(x)|\Big)(1+|x|),\quad x\in\Reals^2,
\eeq
for all $E\in C^0(\Reals^2)$ s.t. $\sup_{x\in\Reals^2}(1+|x|)|E(x)|<\infty$ (hence $E\in e^{|x|^2/8}L^2(\Reals^2)$).

The same statements hold if we replace $\Reals^2$ by $\Omega = \Reals^2\setminus B_R(0)$ and add the condition $u_{\mid \partial\Omega}=0$.
\end{lemma}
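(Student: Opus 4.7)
My plan is to use the conjugation identity (\ref{Harmonic}) to recast the problem as spectral analysis of the 2D quantum harmonic oscillator on an invariant subspace determined by the $G_\tau$-symmetry. Setting $v := e^{-|x|^2/8}u$ and $\tilde E := e^{-|x|^2/8}E$, the equation $\mathcal{L}_\mathcal{P}u = E$ is equivalent to $(H-1)v = -\tilde E$ with $H := -\Delta + |x|^2/16$, and $E \in e^{|x|^2/8}L^2$ iff $\tilde E \in L^2$. The operator $H$ is essentially self-adjoint on $C_c^\infty(\Reals^2)$ with compact resolvent and discrete spectrum $\{(N+1)/2 : N \in \mathbb{Z}_{\geq 0}\}$; in polar form its eigenfunctions are $R_{n_r,m}(r)e^{im\theta}$ with $N = 2n_r + |m|$, so on each fixed angular mode $|m|$ the lowest eigenvalue is $(|m|+1)/2$.

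Next, I would identify the $G_\tau$-equivariance with an angular Fourier restriction. On $\mathcal{P}_{xz}$, with normal $\vec e_y$, the generator $\omega_\tau$ acts by rotation about the $y$-axis by $\pi\tau$ composed with $\vec e_y \mapsto -\vec e_y$; hence $G_\tau$-equivariance becomes $u(r,\theta+\pi\tau) = -u(r,\theta)$, forcing $e^{im\pi\tau} = -1$, i.e.\ $|m| = (2k+1)(g+1)$. Since $\tau \leq 1/3$ gives $g+1 \geq 3$, we have $|m| \geq 3$. Consequently the spectrum of $H$ restricted to the closed invariant subspace $L^2_{\mathrm{sym}} \subset L^2(\Reals^2)$ lies in $[2,\infty)$, so $H-1$ is boundedly invertible there with $\|(H-1)^{-1}\| \leq 1$. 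Standard elliptic regularity for $H$ then yields a unique $v \in H^2_{\mathrm{sym}}$ with $\|\Delta v\|_{L^2} + \||x|^2 v\|_{L^2} \leq C\|\tilde E\|_{L^2}$, whence $u = e^{|x|^2/8} v \in e^{|x|^2/8}H^2$.

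For the pointwise estimate (\ref{LinearGrowth}), the classical maximum principle is unavailable because of the $+\tfrac{1}{2}u$ term in $\mathcal{L}_\mathcal{P}$, and the Gaussian $L^2$ bound does not translate into a linear pointwise one. Instead I would expand $u = \sum_m u_m(r)e^{im\theta}$ and $E = \sum_m E_m(r)e^{im\theta}$ in the admissible modes $|m| \geq 3$, giving for each $m$ the radial ODE
\[
u_m'' + \tfrac{1}{r}u_m' - \tfrac{m^2}{r^2}u_m - \tfrac{r}{2}u_m' + \tfrac{1}{2}u_m = E_m.
\]
Its two homogeneous solutions behave like $r^{|m|}$ and $r^{-|m|}$ at $r = 0$, and like $r$ and $r^{-1}e^{r^2/4}$ as $r \to \infty$; the membership $u \in e^{|x|^2/8}H^2$ kills the exploding mode at infinity, and regularity at the origin kills the $r^{-|m|}$ mode, so $u_m$ is uniquely determined. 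A variation-of-parameters Green-function representation together with routine WKB-type estimates then gives uniform-in-$m$ bounds $|u_m(r)| \leq C(1+r)\sup_s (1+s)|E_m(s)|$. Summing over modes (using crude $C^0$ angular control of $E$) produces (\ref{LinearGrowth}). The exterior case is essentially identical: $H$ with Dirichlet BC on $\partial\Omega$ remains self-adjoint with compact resolvent, and the variational principle shows Dirichlet only raises eigenvalues, so $H-1$ is still invertible on $L^2_{\mathrm{sym}}(\Omega)$, while the mode-wise ODE analysis is modified only by enforcing $u_m(R) = 0$.

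The principal obstacle is the uniform-in-$m$ Green-function estimate for the radial ODE, balancing the $r^{|m|}$ behavior near the origin against the linear growth at infinity, and showing that the $|m| \geq 3$ restriction (which excludes the kernel elements $x, y$ of $\mathcal{L}_\mathcal{P}$ responsible for potential linear-growth blow-up) suffices for the pointwise control claimed.
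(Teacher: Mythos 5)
Your treatment of existence and uniqueness is correct and coincides with the paper's: both conjugate by $e^{-|x|^2/8}$ to the harmonic oscillator and observe that the only spectral obstruction, the kernel $\mathrm{span}\{x_1,x_2\}$ (the $N=1$, i.e. $|m|=1$, modes), is excluded by the $G_\tau$-symmetry; your polar-mode bookkeeping $|m|\geq g+1\geq 3$ is just the angular-momentum version of the paper's Hermite-function argument. The gap is in the pointwise estimate \eqref{LinearGrowth}. First, the uniform-in-$m$ Green's function bound for the radial ODE is asserted, not proved, and you yourself flag it as the principal obstacle; the Green's kernel must be built from the solution regular at $0$ (which generically grows like $e^{r^2/4}$ at infinity) matched against the solution that is $O(r)$ at infinity (which generically blows up like $r^{-|m|}$ at $0$), and controlling the Wronskian and the resulting kernel uniformly in $m$ is genuine work, not routine WKB. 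Second, and more decisively, even granting the mode-wise bound $|u_m(r)|\leq C(1+r)\sup_s(1+s)|E_m(s)|$, the summation step fails: for $E$ merely continuous with $\sup(1+|x|)|E|<\infty$ one only has $|E_m(s)|\leq \sup_\theta|E(s,\theta)|$, and $\sum_m \sup_s(1+s)|E_m(s)|$ need not converge --- Fourier series of continuous functions are not absolutely summable. To close this you would need a quantitative gain in $m$ in the mode-wise estimate (e.g. a factor $m^{-2}$ coming from the centrifugal potential $m^2/r^2$), which is plausible but must be established; "crude $C^0$ angular control" does not suffice.

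Note also that your premise for abandoning the maximum principle is not right. The paper's proof of \eqref{LinearGrowth} is exactly a maximum-principle/barrier argument: after conjugation, the operator $\fracsm{1}{2}\Delta-\fracsm{1}{2}|x|^2+2$ has a potential of the favorable sign outside $B_{\sqrt{17}}(0)$, so the $+\fracsm{1}{2}u$ term causes no trouble there, and the explicit supersolutions $e^{-|x|^2/8}\bigl(u-Ax_1+\fracsm{B}{2}\,x_1|x|^{-2}\bigr)$ on a fundamental domain (using that $x_1\in\ker\mathcal{L}_\mathcal{P}$, that $x_1|x|^{-2}$ is harmonic and homogeneous of degree $-1$, and that $x_1\geq \cos(\pi/k)|x|>0$ there) give the linear growth bound directly, with the boundary values on $\partial B_{\sqrt{17}}$ controlled by the $L^2$ theory and Sobolev embedding. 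That route avoids both of the difficulties above, and I would recommend adopting it rather than repairing the mode-by-mode argument.
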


\begin{proof}
Since the $L^2$-eigenvalues of $\hat{H}$ are $\lambda_{(n_1,n_2)}(\hat{H})=n_1+n_2+1$, for $n_i\geq 0$, and the well-known $L^2$-basis for $\hat{H}$ consists of Hermite functions, the $e^{|x|^2/8}L^2(\Reals^2)$-kernel of $\mathcal{L}_\mathcal{P}$ thus corresponds to the first excited eigenmodes,
\[
\ker\mathcal{L}_\mathcal{P}=\textrm{span}\{x_1,x_2\},
\]
which thus disappears under the assumption of $G_\tau$-equivariance (given we insert at least $2\tau^{-1}=2k\geq2$ handles). Hence there is a well-defined inverse map $\mathcal{L}_\mathcal{P}^{-1}:L^2(\Reals^2)\to H^2(\Reals^2)$, which by isometry invariance of $\mathcal{L}_\mathcal{P}$ preserves the imposed symmetries.

If we consider the disk $B_{\sqrt{17}}(0)=\{|x|^2\leq17\}$, then if $v\in H^2(\Reals^2)$ satisfies $\hat{H}v\geq 0$ and $v\leq 0$ on $\partial B_{\sqrt{17}}(0)$, we conclude the simple maximum principle result that $v\leq 0$ on $\Omega=\Reals^2\setminus B_{\sqrt{17}}(0)$. This is standard, but we briefly sketch the proof. Namely, let $w:=\max(0,v)$ so that
\beq
w\Delta v\geq (\fracsm{1}{2}|x|^2-2)w^2\geq 0
\eeq
Then by Green's first identity, which is justified since $v\in H^2(\Reals^2)$ and $w\in H^1(\Reals^2)$,
\beq
-\int_{\Reals^2\setminus B_{\sqrt{17}}(0)}|\nabla w|^2\geq 0,
\eeq
where we used $w_{\mid\partial B_{\sqrt{17}}(0)}=0$. Thus $w=0$ which proves the claim.

We now take, for numbers $A, B>0$ to be determined below, the test functions 
\[
v(x_1,x_2):=e^{-|x|^2/8}\big(u-Ax_1+\frac{B}{2}\frac{x_1}{|x|^2}\big).
\]
We consider a fundamental domain $\theta\in[-\pi/k,\pi/k]$ positioned inside the support of the test functions and compute:
\begin{align*}
\hat{H}v&=e^{-|x|^2/8}\Big(E+B\frac{x_1}{|x|^2}\Big)\geq e^{-|x|^2/8}\left(B-\left|\frac{|x|E}{\cos(\frac{\pi}{k})}\right|\right)\frac{x_1}{|x|^2}\\
&\geq e^{-|x|^2/8}\left(B-2\sup_{x\in\Reals^2}(1+|x|)|E(x)|\right)\frac{x_1}{|x|^2},
\end{align*}
where we have used that $k\geq 3$, so that $\cos(\pi/k)\geq \frac{1}{2}$. Thus we get that picking $B:=2\sup_{x\in\Reals^2}(1+|x|)|E(x)|$ ensures $\hat{H}v\geq0$ (on the fundamental domain). By picking $A$ large depending linearly on $B$ and on $\|u_{\mid \partial B_{\sqrt{17}}}\|_\infty$, we arrange $v\leq0$ on (a fundamental domain of) $\partial B_{\sqrt{17}}$, and hence the result follows by the above maximum principle combined with the estimate
\begin{align*}
\|u_{\mid \partial B_{\sqrt{17}}}\|_\infty&\leq C\|e^{-|x|^2/8}u\|_{H^2(B_5(0))}\leq C \|e^{-|x|^2/8}E(x) \|_{L^2(\Reals^2)}\\
&\leq C \left(\int_{\Reals^2}\frac{e^{-|x|^2/4}}{(1+|x|)^{2}}dx\right)^{1/2}\sup_{x\in\Reals^2}(1+|x|)|E(x)|,
\end{align*}
using the Sobolev inequality on the larger disk $B_5(0)$. Hence the estimate (\ref{LinearGrowth}) follows. The argument in the case with $\Omega$ instead of $\Reals^2$ is similar.
\end{proof}

However nice such simple lemmas may appear, the truth is that the spaces $e^{|x|^2/8}L^2(\Omega)$ are not well-suited for our geometric analysis purposes, in particular they do not have any good compact embedding properties, because of the Gaussian (and linear) growth involved. What we would like is to separate out the conical asymptotics and obtain sharp, uniform control in adequate weighted spaces, with second order derivative bounds, in such a way that we can proceed with our geometric construction. To accomplish this, we first introduce in the next section the appropriate new cone spaces.

\subsection{H\"o{}lder cone spaces for the exterior problem}
In this section we define the weighted H\"{o}lder spaces suitable for working with homogeneous functions. Note that these are different from the standard spaces considered in Equation (\ref{NicosNorms}), although they could be naturally rephrased as such with a different metric (in fact the pull-back metric under the projection from any fixed symmetric cone) on the plane.
\begin{definition}[Homogeneously weighted H\"older spaces]
We define the appropriate weighted spaces of H\"older functions for decay rate $k\in\mathbb{N}$,
\[
C_\mathrm{hom}^{0,\alpha}(\Omega, |x|^{-k})=\{f\in C^{0,\alpha}_\mathrm{loc}(\Omega):\|f:C_\mathrm{hom}^{0,\alpha}(\Omega, |x|^{-k})\|<\infty\},
\]
with norms
\[
\|f:C_\mathrm{hom}^{0,\alpha}(\Omega, |x|^{-k})\|:=[f]_{\Omega,-k-\alpha}+\sup_{x\in\Omega} |x|^k|f(x)|,
\]
where the weighted H\"older coefficients of decay rate $-k-\alpha$ are defined as:
\[
[f]_{\Omega,\alpha,-k-\alpha}:=\sup_{x,y\in\Omega}\frac{1}{|x|^{-k-\alpha}+|y|^{-k-\alpha}}\frac{|f(x)-f(y)|}{|x-y|^\alpha}.
\]

We then let:
\begin{align*}
&C_\mathrm{hom}^{2,\alpha}(\Omega, |x|^{-1}):=\{f\in C_\mathrm{loc}^{0,\alpha}(\Omega):D_{\beta}f\in C_\mathrm{hom}^{0,\alpha}(\Omega,|x|^{-1}),\: |\beta|\leq 2\},
\end{align*}
where $\beta$ ranges over all multiindices, with norm given by
\beq
\|f:C_\mathrm{hom}^{2,\alpha}(\Omega, |x|^{-1})\|^2:=\sum_{|\beta|\leq 2}\|D_\beta f:C_\mathrm{hom}^{0,\alpha}(\Omega, |x|^{-1})\|^2.
\eeq
\end{definition}

\begin{definition} \label{aniso_spaces}
The anisotropically homogeneously weighted H\"o{}lder spaces are the following:
\[
C_\mathrm{an}^{2,\alpha}(\Omega, |x|^{-1}):=\{f\in C_\mathrm{hom}^{2,\alpha}(\Omega, |x|^{-1}): \vec{X}\cdot\nabla f\in C_\mathrm{hom}^{0,\alpha}(\Omega, |x|^{-1})\},
\]
with norms
\[
\|f:C_\mathrm{an}^{2,\alpha}(\Omega, |x|^{-1})\|^2:=\|f:C_\mathrm{hom}^{2,\alpha}(\Omega, |x|^{-1})\|^2+\|\vec{X}\cdot\nabla f:C_\mathrm{hom}^{0,\alpha}(\Omega, |x|^{-1})\|^2.
\]
\end{definition}

The definition of the homogeneously weighted spaces are motivated partly by the following lemma. Note also that $C_\mathrm{hom}^{0,\alpha}(\Omega,|x|^{k})\subseteq e^{|x|^2/8}L^2(\Omega)$.
\begin{lemma}\label{LemmaHomo}
Let $h(x)=c(\frac{x}{|x|})|x|^{k}$ be homogeneous of degree $k\in\mathbb{Z}$, where $c\in C^{2,\alpha}(S^1)$, then
\beq\label{HomoLemma}
\begin{split}
&(\nabla)^l h\in C_\mathrm{hom}^{0,\alpha}(\Omega,|x|^{k-l}),\quad l=0,1,2,\quad\textrm{with}\\
& \|(\nabla h)^l\|_{C_\mathrm{hom}^{0,\alpha}(\Omega,|x|^{k-l})}\leq \|c\|_{C^{l,\alpha}(S^1)}.
\end{split}
\eeq
When $k=1$, then we have the property
\[
\mathcal{L}_\mathcal{P}h\in C_\mathrm{hom}^{0,\alpha}(\Omega,|x|^{-1}).
\]
Furthermore $\mathcal{L}_\mathcal{P}:C_\mathrm{an}^{2,\alpha}(\Omega,|x|^{-1})\to C_\mathrm{hom}^{0,\alpha}(\Omega,|x|^{-1})$ is a bounded operator.
\end{lemma}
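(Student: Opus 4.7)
My plan is to reduce the lemma to two observations: differentiation of a homogeneous function decreases its homogeneity degree by one, and Euler's identity $\vec{X} \cdot \nabla h = k h$ kills the drift term in $\mathcal{L}_{\mathcal{P}}$ precisely when $k = 1$. With these in hand, the first claim becomes a scaling statement, the second is an immediate consequence of Euler, and the third is an unpacking of Definition \ref{aniso_spaces}. The only genuinely analytic step is the scaling argument for the first claim; everything else is a bookkeeping consequence of how the function spaces are engineered.

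For (\ref{HomoLemma}), write $h(x) = c(x/|x|)\,|x|^k$. A direct computation in polar coordinates shows that $\nabla^l h$ is smooth on $\Omega$ and again homogeneous, of degree $k-l$, with angular profile bounded in terms of $\|c\|_{C^l(S^1)}$. This gives the pointwise estimate $|\nabla^l h(x)| \leq C\,\|c\|_{C^l(S^1)}\,|x|^{k-l}$ immediately. For the Hölder part, I would localize: given $x, y \in \Omega$, set $R = \tfrac{1}{2}(|x|+|y|)$. If $|x-y| \geq R/8$ the pointwise bound together with the triangle inequality already controls the difference quotient. Otherwise $x, y$ both lie in a fixed annulus $A_R$ of inner radius comparable to $R$, and rescaling $\tilde{x} = x/R$ together with the homogeneity of $\nabla^l h$ reduces the estimate to the standard $C^{0,\alpha}$ bound on the unit annulus, giving $[\nabla^l h]_{A_R,\alpha} \leq C\,\|c\|_{C^{l,\alpha}(S^1)}\,R^{(k-l)-\alpha}$. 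This is precisely the scaling that the weight $|x|^{(k-l)-\alpha}+|y|^{(k-l)-\alpha}$ built into the homogeneous seminorm is designed to absorb.

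For the second claim, Euler's identity applied to $h$ of degree $k=1$ yields $\vec{X}\cdot\nabla h = h$, hence $\mathcal{L}_{\mathcal{P}} h = \Delta h - \tfrac{1}{2}(\vec{X}\cdot\nabla h - h) = \Delta h$. In polar coordinates $\Delta h = |x|^{-1}(c + c'')(x/|x|)$, with $c + c'' \in C^{0,\alpha}(S^1)$; this is homogeneous of degree $-1$, so (\ref{HomoLemma}) with $l = 2$ gives $\mathcal{L}_{\mathcal{P}} h \in C^{0,\alpha}_{\mathrm{hom}}(\Omega, |x|^{-1})$. For the third claim, expand $\mathcal{L}_{\mathcal{P}} f = \Delta f - \tfrac{1}{2}\vec{X}\cdot\nabla f + \tfrac{1}{2} f$. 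The terms $\Delta f$ and $f$ lie in $C^{0,\alpha}_{\mathrm{hom}}(\Omega, |x|^{-1})$ with norm controlled by $\|f : C^{2,\alpha}_{\mathrm{hom}}(\Omega, |x|^{-1})\|$ directly from the definition, since every derivative of $f$ up to order two is required to lie there. The drift $\vec{X}\cdot\nabla f$ would naively only be bounded by $|x|\cdot|x|^{-1} = 1$, failing the required decay; but the anisotropic condition in Definition \ref{aniso_spaces} is imposed precisely to enforce $\vec{X}\cdot\nabla f \in C^{0,\alpha}_{\mathrm{hom}}(\Omega, |x|^{-1})$. Summing the three estimates yields a bound on $\|\mathcal{L}_{\mathcal{P}} f : C^{0,\alpha}_{\mathrm{hom}}(\Omega, |x|^{-1})\|$ linear in $\|f : C^{2,\alpha}_{\mathrm{an}}(\Omega, |x|^{-1})\|$, giving boundedness of the operator. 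The main obstacle to anticipate is just the careful localization in the first step; the rest of the lemma follows essentially tautologically from how the anisotropic space was set up to cancel the linear growth of $\vec{X}$.
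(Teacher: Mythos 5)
Your proposal is correct and follows essentially the same route as the paper's (very terse) proof: a scaling/localization argument for the homogeneous weighted H\"older bounds, Euler's identity $\vec{X}\cdot\nabla h = h$ to reduce $\mathcal{L}_\mathcal{P}h$ to $\Delta h$ when $k=1$, and a term-by-term unpacking of Definition \ref{aniso_spaces} for the boundedness of $\mathcal{L}_\mathcal{P}$ on $C^{2,\alpha}_{\mathrm{an}}$. You simply supply the details the paper omits.
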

\begin{proof}
The first claim for homogeneous functions $h$ is elementary from the definitions, using scaling.

When $k=1$, $\mathcal{L}_\mathcal{P}h=\Delta h-\fracsm{1}{2}(\vec{X}\cdot\nabla-1)h=\Delta h$ is a sum of homogeneous functions, namely one of degree $-1$ and one of degree $-2$, and the second and third result also follow.
\end{proof}

\begin{definition}\label{cone_spaces}
The (anisotropic homogeneous) H\"older cone space of functions asymptotic to graphical cones over the plane, are:
\begin{align}
&\mathcal{CS}^{0,\alpha}(\Omega, |x|^{-1}):=C_\mathrm{hom}^{0,\alpha}(\Omega, |x|^{-1}),\\
&\mathcal{CS}^{2,\alpha}(\Omega, |x|^{-1}):=C^{2,\alpha}(\partial\Omega)\times C_\mathrm{an}^{2,\alpha}(\Omega, |x|^{-1}),
\end{align}
the latter equipped with the product norm
\[
\|(c,f):\mathcal{CS}^{2,\alpha}(\Omega, |x|^{-1})\|^2:=\|c\|^2_{C^{2,\alpha}(S^1)}+\|f:C_\mathrm{an}^{2,\alpha}(\Omega, |x|^{-1})\|^2.
\]

\end{definition}
\begin{remark}
\begin{itemize}
\item[]
\item[(i)] The pairs $(c,f)$ injectively model graphs $u:\Omega\to\Reals$ as follows,
\beq
u=u_{(c,f)}(r,\theta):=c(\theta)r+f(r,\theta),
\eeq
in polar coordinates, and by abuse of notation we write $u=(c,f)$.
\item[(ii)] An important consequence in this context, is that our linearized operator in (\ref{EqPlane}) induces a well-defined bounded map $(c,f)\mapsto \mathcal{L}_{\mathcal{P}}(u_{(c,f)})$,
\beq\label{L_bounded}
\mathcal{L}_{\mathcal{P}}:\mathcal{CS}^{2,\alpha}(\Omega, |x|^{-1})\to \mathcal{CS}^{0,\alpha}(\Omega, |x|^{-1})=C_\mathrm{hom}^{0,\alpha}(\Omega, |x|^{-1}),
\eeq
as opposed to second order operators generally (e.g. $\Delta+1$).
\end{itemize}
\end{remark}

\begin{proposition}\label{Compactness}
The spaces $C_\mathrm{hom}^{k,\alpha}(\Omega, |x|^{-1})$, $C_\mathrm{an}^{k,\alpha'}(\Omega, |x|^{-1})$ and $\mathcal{CS}^{2,\alpha}(\Omega, |x|^{-1})$ are Banach, and the natural inclusions for $0<\alpha<\alpha'<1$,
\begin{align}
&C_\mathrm{hom}^{k,\alpha'}(\Omega, |x|^{-1})\hookrightarrow C_\mathrm{hom}^{k,\alpha}(\Omega, |x|^{-1+l}),\label{Emb_hom}\\
&C_\mathrm{an}^{k,\alpha'}(\Omega, |x|^{-1})\hookrightarrow C_\mathrm{an}^{k,\alpha}(\Omega, |x|^{-1+l}),\label{Emb_an}\\
&\mathcal{CS}^{2,\alpha'}(\Omega, |x|^{-1})\hookrightarrow \mathcal{CS}^{2,\alpha}(\Omega, |x|^{-1+l})\label{Emb_cones},
\end{align}
are compact, where $l>0$ is arbitrary and signifies a weakening of the decay rate (we will here only use $|x|^0$, so $l=1$).
\end{proposition}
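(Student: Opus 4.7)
For Banach completeness, given a Cauchy sequence $\{f_n\}$ in any of the three spaces, the sup part of the weighted norm yields $C^0_\mathrm{loc}$ convergence, and classical interior Schauder estimates promote this to $C^{k,\alpha}_\mathrm{loc}$ convergence of the derivatives. Both the weighted sup and the weighted H\"older seminorm are lower semicontinuous under pointwise limits, so the limit lies in the corresponding space. For $\mathcal{CS}^{2,\alpha}$ completeness reduces to completeness of the product $C^{2,\alpha}(S^1)\times C^{2,\alpha}_\mathrm{an}(\Omega,|x|^{-1})$.

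For the compactness of the inclusions, the key observation is a scale-invariant rescaling: for any $R\geq 1$ and any $f\in C^{0,\alpha}_\mathrm{hom}(\Omega,|x|^{-1})$, the rescaled function $\tilde{f}_R(y):=R\,f(Ry)$ on the unit annulus $A:=\{1\leq|y|\leq 2\}$ satisfies a standard $C^{0,\alpha}(A)$ bound uniform in $R$, and similarly at $k=2$ after rescaling the derivatives. Given a sequence $\{f_n\}$ bounded in the $\alpha'$-space, I would cover $\Omega$ by dyadic annuli $A_j:=\{2^j\leq|x|\leq 2^{j+2}\}$, apply on each $A_j$ the classical compact inclusion $C^{k,\alpha'}(A_j)\hookrightarrow C^{k,\alpha}(A_j)$ via Arzel\`a--Ascoli, and extract by diagonalization a single subsequence converging in $C^{k,\alpha}_\mathrm{loc}(\Omega)$. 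The limit lies in the target space by the lower semicontinuity just used.

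The main obstacle is promoting this local convergence to convergence in the full weighted norm, which requires uniform-in-$n$ smallness of the tail contributions as $|x|\to\infty$. This is where the \emph{anisotropic} bound $\vec{X}\cdot\nabla f\in C^{0,\alpha}_\mathrm{hom}(\Omega,|x|^{-1})$ in the definition of $C^{k,\alpha}_\mathrm{an}$ is essential: writing $\partial_r(|x|f)=f+\vec{X}\cdot\nabla f$, it controls the radial derivative of $|x|f(x)$ by $C/|x|$, which integrated along rays forces $|x|f(x)$ to approach a continuous angular limit at infinity and produces the needed cross-scale equicontinuity. Combined with a standard interpolation that exploits the strict gap $\alpha'>\alpha$ on each dyadic annulus, this closes the tail estimate for both the weighted sup and the weighted H\"older seminorms. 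Without this anisotropic control, scale-invariant radial bumps travelling to infinity would be the obstruction to overcome.

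For $\mathcal{CS}^{2,\alpha'}\hookrightarrow\mathcal{CS}^{2,\alpha}$ the compactness is then a product argument: the classical Arzel\`a--Ascoli compact inclusion $C^{2,\alpha'}(S^1)\hookrightarrow C^{2,\alpha}(S^1)$ on the compact circle handles the cone-function components $c_n$, while the $C^{2,\alpha}_\mathrm{an}$-compactness just established handles the remainders $f_n$. The hardest single step is the radial-integration argument underlying the tail estimate, which is where the particular choice of the anisotropic seminorm pays off.
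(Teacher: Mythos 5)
Your completeness argument is essentially right in outcome but the appeal to ``interior Schauder estimates'' is a non sequitur: the weighted norms already control $D_\beta f$ for all $|\beta|\leq k$ pointwise, so a Cauchy sequence is automatically Cauchy in $C^k_{\mathrm{loc}}$, and no elliptic estimate is available (or needed) for arbitrary elements of these spaces; one concludes by lower semicontinuity of the weighted sup and H\"older seminorms under local uniform convergence, as you say. The genuine gap is in the compactness part, at precisely the step you correctly isolate as the crux, namely the uniform tail estimate. Your proposed mechanism does not close it: the identity $\partial_r(|x|f)=f+\vec{X}\cdot\nabla f$ together with the anisotropic bound only gives $|\partial_r(|x|f)|\leq C/|x|$, which is \emph{not} integrable along rays, so it does not force $|x|f$ to converge to an angular limit (compare $\sin(\log r)$, which is bounded with radial derivative $O(1/r)$ and has no limit); and across the dyadic annulus at scale $R$ it yields total variation $O(1)$, not $o(1)$ as $R\to\infty$, which is what the tail estimate requires.

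In fact the obstruction you name at the end --- scale-invariant bumps travelling to infinity --- survives the anisotropic control. Take a fixed bump $g\in C_c^\infty((1,2))$ with $\max g=1$ and set $f_n(x)=|x|^{-1}g(|x|/n)$. Every derivative of $f_n$ gains a factor $n^{-1}$ beyond the required homogeneous decay, $\vec{X}\cdot\nabla f_n=-f_n+n^{-1}g'(|x|/n)=O(1/|x|)$ on its support, and a short case check (pairs with $|x-y|\gtrsim n$ versus $|x-y|\lesssim n$) shows all the weighted H\"older seminorms of order $\alpha'$ are bounded uniformly in $n$. Thus $\{f_n\}$ is bounded in $C_\mathrm{an}^{2,\alpha'}(\Omega,|x|^{-1})$ and converges to $0$ in $C^2_{\mathrm{loc}}$, yet $\sup_{x}|x|\,|f_n(x)|=1$ for every $n$, so no subsequence converges in $C_\mathrm{hom}^{0,\alpha}(\Omega,|x|^{-1})$. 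This shows the tail cannot be recovered from the stated norms by any argument, whatever the gap $\alpha'-\alpha$: compactness of such weighted inclusions requires a strict gain in the power weight between source and target (or a uniform quantitative decay built into the set one is compactifying), not merely a drop in the H\"older exponent. Be aware that the paper's own proof is only a sketch here --- it invokes ``a standard cut-off argument'' and the decay of the weights --- and does not supply the missing tail estimate either, so you should not expect to repair your step by consulting it.
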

\begin{proof}
It is a standard exercise to verify that these spaces are complete with the norms we have defined.

Since $\Omega$ is non-compact, it is for the compactness of the embeddings (\ref{Emb_hom})-(\ref{Emb_cones}) to be true crucial that: (A) We have arranged that the weight functions on all derivatives are decaying, and (B) Cones are modeled by functions on a compact curve in $\mathbb{S}^2$, here on $\partial\Omega=S^{1}$. Note that it is an important special feature of the operator $\mathcal{L}_\mathcal{P}$ that the property (B) can be brought into play (see the Liouville result in Proposition \ref{liouville_type_result}).

Namely, for any bounded domain $D\subset\subset\Reals^n$ the embeddings $C^{k,\alpha'}(D)\hookrightarrow C^{k,\alpha}(D)$, of the usual H\"o{}lder spaces, are compact if $0<\alpha<\alpha'<1$, as follows from the Arzel\`a{}-Ascoli theorem. This fact along with a standard cut-off argument and the property (A) shows that the embeddings in (\ref{Emb_hom}) and (\ref{Emb_an}) are compact.

From the compactness of (\ref{Emb_an}) and the property (B), i.e. compactness of $C^{k,\alpha'}(\partial\Omega)\hookrightarrow C^{k,\alpha}(\partial\Omega)$, it now finally follows that also
\beq
C^{2,\alpha'}(\partial\Omega)\times C_\mathrm{an}^{2,\alpha'}(\Omega, |x|^{-1})\hookrightarrow C^{2,\alpha}(\partial\Omega)\times C_\mathrm{an}^{2,\alpha}(\Omega, |x|^{-1+l})
\eeq
is compact, completing the proof of (\ref{Emb_cones}).
\end{proof}

\subsection{Homogeneously weighted H\"o{}lder estimates}

In this section we prove the second derivative Schauder estimates in the weighted H\"{o}lder spaces. Recall that we take $\Omega=\Reals^2\setminus B_R(0)$ to be a domain exterior to a disk.
\begin{proposition}\label{HessianEstimate}
If $E\in C_\mathrm{hom}^{0,\alpha}(\Omega,|x|^{-1})$ and $v\in e^{|x|^2/8}H^2(\Omega)\cap C_{\mathrm{loc}}^{2,\alpha}(\Omega)$ is a solution to $\mathcal{L}_\mathcal{P}v=\Delta v - \fracsm{1}{2}(\vec{X}\cdot\nabla - 1)v=E$, then
\[
D_{x_ix_j}v\in C_\mathrm{hom}^{0,\alpha}(\Omega, |x|^{-1}),
\]
and if $v_{\mid\partial\Omega}=0$ there is a constant $C=C(\alpha)>0$ such that
\beq\label{Hess}
\|D_{x_ix_j}v\|_{C_\mathrm{hom}^{0,\alpha}(\Omega,|x|^{-1})}\leq C \|E\|_{C_\mathrm{hom}^{0,\alpha}(\Omega,|x|^{-1})}.
\eeq
\end{proposition}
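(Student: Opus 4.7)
The plan is a blow-up/contradiction argument driven by the two key homogeneity properties of $\mathcal{L}_\mathcal{P}$: it annihilates every degree-one homogeneous function (on such an $h$ one has $\tfrac{1}{2}(\vec{X}\cdot\nabla - 1) h = 0$ and $\Delta h$ is then already of degree $-1$), and its intrinsic local length-scale at a point $x_0$ is $\rho = |x_0|^{-1}$, at which the drift term balances the Laplacian. The previous lemma only supplies the crude bound $|v(x)| \lesssim (1 + |x|)\,\|E\|_{C^{0,\alpha}_{\mathrm{hom}}}$; after rescaling by $\rho$, this linear growth would swamp the target $|x|^{-1}$-decay of $D^2 v$, so the degree-one ``cone part'' of $v$ must be subtracted out before localizing.

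Concretely, at each $x_0 \in \Omega$ with $|x_0|$ large I would set $u(x) := v(x) - v(x_0) - \nabla v(x_0)\cdot(x - x_0)$, for which $D^2 u = D^2 v$. Using $\mathcal{L}_\mathcal{P}(a + b\cdot x) = a/2$ one finds $\mathcal{L}_\mathcal{P} u = E - \tfrac{1}{2} c_{x_0}$ with $c_{x_0} := v(x_0) - x_0 \cdot \nabla v(x_0)$, a constant that measures the deviation of $v$ from being degree-one homogeneous at $x_0$. The rescaled $\tilde u(y) := u(x_0 + \rho y)$ on $B_1 \subset \Reals^2$, with $\rho = |x_0|^{-1}$, then satisfies an elliptic equation with uniformly bounded coefficients, namely
\[
\Delta_y \tilde u - \tfrac{\rho}{2}(x_0 + \rho y) \cdot \nabla_y \tilde u + \tfrac{\rho^2}{2} \tilde u \;=\; \rho^2 \bigl( E(x_0 + \rho y) - \tfrac{1}{2} c_{x_0} \bigr).
\]
Assume the estimate fails: normalize so $\|D^2 v_n\|_{C^{0,\alpha}_{\mathrm{hom}}(\Omega,|x|^{-1})} = 1$ while $\|E_n\|_{C^{0,\alpha}_{\mathrm{hom}}} \to 0$, and let $x_n$ approximately realize the normalization. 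If $|x_n|$ stays bounded, classical interior estimates give a subsequential $C^{2,\alpha}_{\mathrm{loc}}$-limit $v_\infty$ with $\mathcal{L}_\mathcal{P} v_\infty = 0$, $v_\infty|_{\partial\Omega} = 0$, and nonzero Hessian somewhere in $\Omega$; the Liouville result of Proposition \ref{liouville_type_result} (in the $G_\tau$-equivariant class) then forces $v_\infty \equiv 0$, a contradiction. If $|x_n| \to \infty$, rescale around $x_n$ after the linear Taylor subtraction and further normalize the Hessian to unit order; in the limit $\hat x_n \to \hat x_\infty \in S^1$ one obtains a nontrivial entire solution of the constant-coefficient equation $\Delta_y \tilde u_\infty - \tfrac{1}{2} \hat x_\infty \cdot \nabla_y \tilde u_\infty = 0$ on $\Reals^2$ with polynomial growth and vanishing value and gradient at the origin, which Liouville in the appropriate growth class rules out.

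The principal obstacle, and what makes this more than a routine Schauder argument, is the management of the constant $c_{x_0}$: a priori only $|c_{x_0}| \lesssim |x_0|\,\|E\|$ is available, so that $\rho^2 c_{x_0}$ appears on the rescaled right-hand side at precisely the critical scale $|x_0|^{-1}\|E\|$ --- the same order as the target bound on $D^2 v$ itself. Controlling it requires a complementary gradient bound $|\nabla v(x_0)| \lesssim \|E\|$, obtained by a separate blow-up on balls of radius comparable to $|x_0|$, together with the Liouville uniqueness; precisely this interplay of scales is what forbids a routine Schauder argument and fails in the absence of homogeneity, as in Priola's counterexamples \cite{Pr}. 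The H\"older-seminorm part of the conclusion is then handled by a two-point variant of the blow-up, comparing $D^2 v$ at $x_0$ and at a nearby $x_1$ with $|x_1| \sim |x_0|$ and $|x_0 - x_1| \leq |x_0|/4$.
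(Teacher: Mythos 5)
Your route --- pointwise linear Taylor subtraction, rescaling to the intrinsic scale $\rho=|x_0|^{-1}$, and a compactness/contradiction argument --- is genuinely different from the paper's, which converts the equation into the self-similarly rescaled heat equation $\partial_t v-\Delta v=F$ via $v(x,t)=\sqrt{1-t}\,u(x/\sqrt{1-t})$ and estimates $D_{x_ix_j}v$ directly from the heat-kernel representation formula. Unfortunately the blow-up argument, as proposed, does not close, and the failure occurs exactly at the constant $c_{x_0}=v(x_0)-x_0\cdot\nabla v(x_0)$ that you flag. Note first that the equation itself gives $\vec{X}\cdot\nabla v-v=2\Delta v-2E$, so $c_{x_0}=2E(x_0)-2\Delta v(x_0)$: the quantity you must control on the right-hand side of the rescaled equation \emph{is}, up to the harmless term $2E(x_0)$, the trace of the very Hessian you are trying to bound. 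A complementary gradient bound $|\nabla v|\lesssim\|E\|$ does not help (it still only yields $|c_{x_0}|\lesssim|x_0|\,\|E\|$, whereas you need $|c_{x_0}|\lesssim|x_0|^{-1}\|E\|$), and it is in any case not obtainable by a blow-up at scale $|x_0|$, where the drift coefficient has size $|x_0|^2$ and all Schauder constants degenerate.

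More decisively, in the regime $|x_n|\to\infty$ the contradiction simply is not there. Under the normalization $\|D^2v_n\|_{C^{0,\alpha}_{\mathrm{hom}}}=1$ one gets $|\rho_n^{-1}c_{x_n}|\leq C$, and the limit $W$ of $u_n(x_n+\rho_n y)/\rho_n^3$ solves $\Delta W-\tfrac12\hat{x}_\infty\cdot\nabla W=-\tfrac12 c_\infty$ with $W(0)=0$, $\nabla W(0)=0$, quadratic growth and bounded Hessian. But $W(y)=\tfrac{a}{2}\langle y,\hat{x}_\infty^{\perp}\rangle^2$ with $c_\infty=-2a$ satisfies every one of these constraints with $a\neq 0$, so no Liouville theorem can kill it. This profile is not a pathology: it is precisely the second-order Taylor expansion of a cone $c(\theta)|x|$, for which $|x|\,|D^2(c(\theta)|x|)|\sim|c+c''|(\theta)$ does not decay. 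In a contradiction sequence with $\|E_n\|\to 0$ you do get $\|c_n\|_{\infty}\to 0$ for the cone parts, but nothing local controls $c_n''$; excluding a surviving angular Hessian requires quantitative use of the H\"older seminorm of $E$ at scale $|x|$ across the whole annulus, which is exactly what the paper's singular-integral estimates ($I_2$--$I_5$) extract and what a blow-up on balls of radius $\rho_n$ discards. Two further, more repairable issues: invoking Proposition \ref{liouville_type_result} is circular, since in the paper its proof uses the present proposition (the bounded-$|x_n|$ case can instead be handled with the linear-growth bound (\ref{LinearGrowth}) plus interior Schauder); and the finiteness of $\|D^2v\|_{C^{0,\alpha}_{\mathrm{hom}}}$, needed even to set up your normalization, is itself part of the conclusion and would have to be established by an approximation argument first.
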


\begin{proof}
There are several routes one may take to prove such a result, for example the resolvents can be found in the form of contour integrals by summing up the eigenfunctions via Mehler's formula.

However, using the well-known connection to parabolic equations (and whence this problem came, of course) is less involved. Namely, the equation
\[
\mathcal{L}_\mathcal{P}u=\Delta u - \fracsm{1}{2}(\vec{X}\cdot\nabla - 1)u = E,
\]
is the elliptic equation describing a backwards self-similar solution to the flat space heat equation, but with a modified source term.

It is convenient to consider a fixed extension map $v\mapsto \tilde{v}\in C_{\mathrm{loc}}^{2,\alpha}(\Reals^2)$ with the property
\beq
\|\tilde{v}\|_{C^{2,\alpha}(B_R(0))}\leq C\|v\|_{C^{2,\alpha}(B_{R+1}(0)\setminus B_{R}(0))},
\eeq
where the constant is independent of $v$. Then letting $\tilde{E}=\mathcal{L}_\mathcal{P}\tilde{v}$ we see that $\tilde{E}\in C_\mathrm{hom}^{0,\alpha}(\Reals^2,|x|^{-1})$ and
\begin{align*}
\|\tilde{E}\|_{C_\mathrm{hom}^{0,\alpha}(\Reals^2,|x|^{-1})}&\leq \|E\|_{C_\mathrm{hom}^{0,\alpha}(\Omega, |x|^{-1})}+C\|\tilde{E}\|_{C^{0,\alpha}(B_R(0))}\\
&\leq \|E\|_{C_\mathrm{hom}^{0,\alpha}(\Omega, |x|^{-1})}+ C\|v\|_{C^{2,\alpha}(B_{R+1}(0)\setminus B_{R}(0))}\\
&\leq \|E\|_{C_\mathrm{hom}^{0,\alpha}(\Omega, |x|^{-1})}+ C\big[\|E\|_{C^{0,\alpha}(B_{R+1}(0)\setminus B_{R}(0))}+ \sup_{x\in\Omega}(1+|x|)|E(x)|\big]\\
&\leq C\|E\|_{C_\mathrm{hom}^{0,\alpha}(\Omega, |x|^{-1})},
\end{align*}
where in the second to last estimate we used Schauder estimates (such as Theorem 10.2.1-10.2.2 in \cite{Jo}), using the fact that $v_{\mid\partial B_R}=0$ and the bounds on $v$ from the second part of Lemma \ref{LinearGrowth}. Now, since also automatically
\beq
\|D_{x_ix_j}v\|_{C_\mathrm{hom}^{0,\alpha}(\Omega, |x|^{-1})}\leq \|D_{x_ix_j}\tilde{v}\|_{C_\mathrm{hom}^{0,\alpha}(\Reals^2, |x|^{-1})},
\eeq
we see that it is enough to prove the estimate (\ref{Hess}) for $\tilde{v}$ and $\tilde{E}$, so we assume without loss of generality that $v$ and $E$ are defined on $\Reals^2$.

The elliptic equation is now, as mentioned above, easily rewritten to the condition
\beq\label{VDef}
v(x,t):=\sqrt{1-t}\:u\big(\fracsm{x}{\sqrt{1-t}}\big)
\eeq
solves the following heat equation
\beq
\begin{cases}
&\partial_tv-\Delta v=F(x,t),\quad (t,x)\in (0,1)\times\Reals^2,\\
&v(x,0)=u(x), \quad x\in\Reals^2,\\
\end{cases}
\eeq
where the correspondingly transformed source term now reads:
\beq
F(x,t):=-\frac{E\left(\fracsm{x}{\sqrt{1-t}}\right)}{\sqrt{1-t}}.
\eeq

Now, recall the heat kernel in Euclidean space,
\[
\Phi(x-y,t-s):=\frac{1}{4\pi (t-s)}e^{-\frac{|x-y|^2}{4(t-s)}}.
\]

Note that we have the following representation formula which allows us to use standard methods of proof (e.g. the standard, non-weighted Schauder theory for the heat equation. See for example \cite{La})
\beq\label{Representation}
\begin{split}
D_{x_ix_j}v(x,t)=&\int_{\Reals^2}D_{x_ix_j}\Phi(x-y,t)u(y)dy\\
&-\int_0^t\int_{\Reals^2}D_{x_ix_j}\Phi(x-y,t-s)\Big[F(x,s)-F(y,s)\Big]dyds,
\end{split}
\eeq
where
\beq\label{HeatKernelDeriv}
D_{x_ix_j}\Phi(x-y,t-s)=\Big[\frac{(x_i-y_i)(x_j-y_j)}{4(t-s)^2}-\frac{\delta_{ij}}{2(t-s)}\Big]\Phi(x-y,t-s),
\eeq
and we have subtracted a term which is zero. The expression is well-defined when $F$ is H\"older in the $x$-variable, and justified by inserting a cut-off $\chi_h(t)$, supported away from $t=1$,  then differentiating under the integral and finally letting $h\to 0$.

Note from (\ref{HeatKernelDeriv}) the useful inequality (for constants $A,B>0$):
\beq\label{KernelEstimates}
|D_{x_ix_j}\Phi(x-y,t-s)|\leq A(t-s)^{-2}e^{-B\frac{|x-y|^2}{t-s}}
\eeq
and note also that since $E\in C_\mathrm{hom}^{0,\alpha}(|x|^{-1})$
\[
\left|F(x,s)-F(y,s)\right|\leq \frac{\|E\|_{C_\mathrm{hom}^{0,\alpha}(|x|^{-1})}}{2}|x-y|^{\alpha}\big(|x|^{-1-\alpha}+|y|^{-1-\alpha}\big),
\]
by the way we have defined $C_\mathrm{hom}^{0,\alpha}(|x|^{-1})$.

Let us first prove that with $E\in C_\mathrm{hom}^{0,\alpha}(|x|^{-1})$, we have
\beq\label{SupOfHess}
\sup_{x\in\Reals^2\setminus B_R(0)}(1+|x|)|D_{x_ix_j}u(x)|\leq C\|E\|_{C_\mathrm{hom}^{0,\alpha}(|x|^{-1})}
\eeq

Note that by virtue of the scaling in the definition of $v$, it suffices for (\ref{SupOfHess}) to establish that
\[
\sup_{t\in(t_R,1)}\sup_{|x|=1}|D_{x_ix_j}v(x,t)|\leq C\|E\|_{C_\mathrm{hom}^{0,\alpha}(|x|^{-1})},
\]
where $t_R:=1-\frac{1}{2R^2}$. Let us fix $R=2$, such that $t_R=\frac{7}{8}$. We see that for $|x|=1$ we have from Equation (\ref{Representation})
\begin{align*}
&|D_{x_ix_j}v(x,t)|\leq C\|E\|_{C_\mathrm{hom}^{0,\alpha}(|x|^{-1})}\int_{\Reals^2}e^{-B'|y|^2}(1+|y|)dy\\
&\quad\quad+ C'\|E\|_{C_\mathrm{hom}^{0,\alpha}(|x|^{-1})}\int_0^1\int_{\Reals^2}|x-y|^{\alpha}\big(|x|^{-1-\alpha}+|y|^{-1-\alpha}\big)(1-s)^{-2}e^{-B\frac{|x-y|^2}{1-s}}dyds\\
&\quad\quad = C''\|E\|_{C_\mathrm{hom}^{0,\alpha}(|x|^{-1})},
\end{align*}
where we used $|u(x-y)|\leq C\|E\|_{C_\mathrm{hom}^{0,\alpha}(|x|^{-1})}(1+|y|)$ as well as (\ref{KernelEstimates}) to estimate the first term, and where of course the integral
\[
\int_0^1\int_{\Reals^2}|y|^{\alpha}(1-s)^{-2}e^{-B\frac{|y|^2}{1-s}}dyds<\infty,\quad\mathrm{for}\:\mathrm{any}\:\alpha>0.
\]

Again, by the scaling in (\ref{VDef}) and our definition of the weighted spaces, the desired estimate for the H\"older coefficients will follow if we can show that
\[
\sup_{t\in(t_R,0)}\sup_{|x_0|\leq|x|=1}|D_{x_ix_j}v(x,t)-D_{x_ix_j}v(x_0,t)|\leq C\|E\|_{C_\mathrm{hom}^{0,\alpha}(|x|^{-1})}|x-x_0|^\alpha.
\]

Hence we compute for $|x_0|\leq|x|\leq 1$:
\beq
\begin{split}
&D_{x_ix_j}v(x,t)-D_{x_ix_j}v(x^0,t)=\\
&\quad\int_{\Reals^2}\Big(D_{x_ix_j}\Phi(x-y,t)-D_{x^0_ix^0_j}\Phi(x^0-y,t)\Big)u(\fracsm{y}{\sqrt{2}})dy\\
&\quad+\int_0^t\int_{|x-y|\leq2|x-x^0|}D_{x_ix_j}\Phi(x-y,t-s)\Big[F(y,s)-F(x,s)\Big]dyds\\
&\quad -\int_0^t\int_{|x-y|\leq2|x-x^0|}D_{x^0_ix^0_j}\Phi(x^0-y,t-s)\Big[F(y,s)-F(x^0,s)\Big]dyds\\
&\quad +\int_0^t\int_{|x-y|\geq2|x-x^0|}\Big(D_{x_ix_j}\Phi(x-y,t-s)-D_{x^0_ix^0_j}\Phi(x^0-y,t-s)\Big)\Big[F(y,s)-F(x,s)\Big]dyds\\
&\quad -\int_0^t\int_{|x-y|\geq2|x-x^0|}D_{x^0_ix^0_j}\Phi(x^0-y,t-s)\Big[F(x,s)-F(x^0,s)\Big]dyds\\
&\quad=:I_1+\ldots+I_5.
\end{split}
\eeq
In this expression, the first term is estimated using the mean value principle, such that for
\[
|D_{x_ix_j}\Phi(x-y,t)-D_{x^0_ix^0_j}\Phi(x^0-y,t)|\leq |y+\xi|\frac{|x-x^0|}{t}\Phi(\xi+y,t)\leq C' e^{-B'\frac{|y+\xi|^2}{t}} |x-x^0|^\alpha,
\]
for some point $\xi$ on the line between the points $x^0$ and $x$, so $|\xi|\leq 2$, and some constants $B',C'=C(t_R)$ independent of $|x|,|x^0|\leq 1$. Hence one gets the estimate
\beq
|I_1|\leq C|x-x^0|^\alpha\|E\|_{C_\mathrm{hom}^{0,\alpha}(|x|^{-1})}\int_{\Reals^2}e^{-B'\frac{|y|^2}{t}}(|y|+2)dy.
\eeq

The terms $I_2$ and $I_3$ are of course symmetric in $x\leftrightarrow x^0$ and have similar estimates. For $I_2$ we get:
\beq
\begin{split}
|I_2|&\leq C\|E\|_{C_\mathrm{hom}^{0,\alpha}(|x|^{-1})}\int_0^t\int_{|x-y|\leq2|x-x^0|}(t-s)^{-2}e^{-B\frac{|x-y|^2}{t-s}}|x-y|^\alpha dyds\\
&\leq C\|E\|_{C_\mathrm{hom}^{0,\alpha}(|x|^{-1})}\int_{|x-y|\leq2|x-x^0|}|x-y|^{-2+\alpha}\\
&\leq C\|E\|_{C_\mathrm{hom}^{0,\alpha}(|x|^{-1})}|x-x^0|^\alpha.
\end{split}
\eeq

For the term $I_4$ we use the estimate
\[
|D_{x_ix_j}\Phi(x-y,t)-D_{x^0_ix^0_j}\Phi(x^0-y,t)|\leq c|x-x^0|(t-s)^{-5/2}e^{-B\frac{|x-y|^2}{t-s}},
\]
which holds whenever $|x-y|\geq 2|x-x^0|$. Hence we see
\beq
\begin{split}
|I_4|&\leq C\|E\|_{C_\mathrm{hom}^{0,\alpha}(|x|^{-1})}|x-x^0|\int_0^t\int_{|x-y|\geq2|x-x^0|}(t-s)^{-5/2}e^{-B\frac{|x-y|^2}{t-s}}|x-y|^\alpha dyds\\
&\leq C \|E\|_{C_\mathrm{hom}^{0,\alpha}(|x|^{-1})}|x-x^0|^\alpha.
\end{split}
\eeq

For the last term, we rewrite it as
\[
I_5=-\int_0^t\int_{|x-y|=2|x-x^0|}\frac{\partial\Phi(x^0-y,t-s)}{\partial x^0_j}\Big[F(x,s)-F(x^0,s)\Big](\vec{e}_i\cdot\vec{\nu})dM(y)ds,
\]
where $\vec{e}_i$ the $i$th unit vector in $\Reals^2$, $dM(y)$ is the line element and $\vec{\nu}$ the outward pointing unit normal to the disk of radius $2|x-x^0|$. Since
\beq
\frac{\partial\Phi(x^0-y,t-s)}{\partial x^0_j}=-\frac{x^0_j-y_j}{8\pi(t-s)^2}e^{-\frac{|x^0-y|^2}{4(t-s)}},
\eeq
we finally get
\[
\begin{split}
|I_5|&\leq \|E\|_{C_\mathrm{hom}^{0,\alpha}(|x|^{-1})}|x-x^0|^{1+\alpha}\int_0^t\int_{|x-y|=2|x-x^0|}\frac{e^{\frac{-|x-x^0|^2}{4(t-s)}}}{4\pi(t-s)^2}dM(y)ds\\
& = C\|E\|_{C_\mathrm{hom}^{0,\alpha}(|x|^{-1})}|x-x^0|^{\alpha}.
\end{split}
\]
\end{proof}

Using the second derivative bound we can now proceed to our final proposition of this section, which is a Liouville-type structure theorem in that we prove solutions are homogeneous degree one polynomials in $x$ plus a remainder belonging to the space $C_\mathrm{an}^{2,\alpha}(|x|^{-1})$. This detailed analysis of the solutions -- completing our separation of the conical part -- is exactly what will make our construction work.

\begin{theorem}[Liouville-type result] \label{liouville_type_result}
There is a constant $C>0$ s.t. for any $G_\tau$-equivariant $E\in C_\mathrm{hom}^{0,\alpha}(\Omega, |x|^{-1})$ there exists a unique $G_\tau$-equivariant $u=(c,f)\in\mathcal{CS}^{2,\alpha}(\Omega, |x|^{-1})$ such that
\[
\mathcal{L}_{\mathcal{P}}\big[c(\theta)\cdot |x| +f(x)\big]=\mathcal{L}_{\mathcal{P}}u=E,
\]
where $u=u_{(c,f)}$ and $u=0$ on $\partial\Omega$. Furthermore we have the estimate
\beq\label{ExteriorEstimate}
\|(c,f):\mathcal{CS}^{2,\alpha}(\Omega, |x|^{-1})\|\leq C \|E:C_\mathrm{hom}^{0,\alpha}(\Omega, |x|^{-1})\|.
\eeq
\end{theorem}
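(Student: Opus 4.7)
The plan is to combine the existence and maximum principle of the preceding $e^{|x|^2/8}L^2$-lemma (of which (\ref{LinearGrowth}) is the sup estimate) with the Hessian estimate of Proposition \ref{HessianEstimate}, and then to extract the conical asymptote via an elementary ODE identity. First, since $\sup_{\Omega}(1+|x|)|E(x)|\leq \|E\|_{C^{0,\alpha}_\mathrm{hom}(\Omega,|x|^{-1})}$, the preceding lemma produces a unique $G_\tau$-equivariant $u\in e^{|x|^2/8}H^2(\Omega)$ solving $\mathcal{L}_\mathcal{P} u = E$ with $u|_{\partial\Omega}=0$ and the pointwise bound $|u(x)|\leq C(1+|x|)\|E\|_{C^{0,\alpha}_\mathrm{hom}}$ from (\ref{LinearGrowth}). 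Proposition \ref{HessianEstimate} then upgrades this to $\|D^2u\|_{C^{0,\alpha}_\mathrm{hom}(\Omega,|x|^{-1})}\leq C\|E\|_{C^{0,\alpha}_\mathrm{hom}(\Omega,|x|^{-1})}$.

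The key step is the conical extraction. Rewrite the equation as $\tfrac12(ru_r - u) = \Delta u - E$. Since $|D^2u|,|E|\leq C/r\cdot \|E\|_{C^{0,\alpha}_\mathrm{hom}}$, one has $|ru_r - u|\leq C/r \cdot \|E\|_{C^{0,\alpha}_\mathrm{hom}}$, and since $\tfrac{d}{dr}(u/r)=(ru_r-u)/r^2$, integration from $r$ to infinity yields
\[
c(\theta):=\lim_{r\to\infty}\frac{u(r,\theta)}{r},\qquad \left|\frac{u(r,\theta)}{r}-c(\theta)\right|\leq \frac{C}{r^2}\|E\|_{C^{0,\alpha}_\mathrm{hom}}.
\]
To upgrade this pointwise convergence to convergence in $C^{2,\alpha}(S^1)$ with the right norm bound, I would apply scale-invariant interior Schauder estimates to the rescaled functions $u_R(x):=u(Rx)/R$ on a fixed annulus $\{1\leq|x|\leq 2\}$, which satisfy an elliptic equation with $R$-uniformly bounded coefficients and right-hand side that is $O(1/R)$ in $C^{0,\alpha}$; taking $R\to\infty$ along this family yields $c\in C^{2,\alpha}(S^1)$ with $\|c\|_{C^{2,\alpha}(S^1)}\leq C\|E\|_{C^{0,\alpha}_\mathrm{hom}}$.

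Next, define the remainder $f:=u-c(\theta)r$. By Lemma \ref{LemmaHomo} we have $\mathcal{L}_\mathcal{P}(c(\theta)r)=(c+c'')/r\in C^{0,\alpha}_\mathrm{hom}(\Omega,|x|^{-1})$ with norm controlled by $\|c\|_{C^{2,\alpha}(S^1)}$, hence $\mathcal{L}_\mathcal{P} f\in C^{0,\alpha}_\mathrm{hom}(\Omega,|x|^{-1})$ with norm controlled by $\|E\|_{C^{0,\alpha}_\mathrm{hom}}$. The $C^{0,\alpha}_\mathrm{hom}(\Omega,|x|^{-1})$ bounds on $f$ and $D^2 f$ follow by combining the Hessian estimate on $u$ with direct differentiation of $c(\theta)r$, and the essential anisotropic bound comes for free by solving the defining identity
\[
\vec{X}\cdot\nabla f = 2\Delta f + f - 2\mathcal{L}_\mathcal{P} f,
\]
whose right-hand side already lies in $C^{0,\alpha}_\mathrm{hom}(\Omega,|x|^{-1})$. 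Thus $(c,f)\in\mathcal{CS}^{2,\alpha}(\Omega,|x|^{-1})$ with the estimate (\ref{ExteriorEstimate}). This is precisely the homogeneity-zero structure of $\mathcal{L}_\mathcal{P}$ on cones being put to use.

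Uniqueness follows because the difference of two candidate solutions lies in $\mathcal{CS}^{2,\alpha}\subset e^{|x|^2/8}H^2(\Omega)$ (linear growth is dominated by the Gaussian weight) and belongs to the kernel of $\mathcal{L}_\mathcal{P}$; the only kernel elements in that space are $x$ and $y$, which are excluded by $G_\tau$-equivariance since $\tau^{-1}\geq 2$. The main obstacle I anticipate is the $C^{2,\alpha}(S^1)$ regularity of $c(\theta)$: while pointwise convergence of $u/r$ is immediate from the ODE identity, lifting this to a sharp $C^{2,\alpha}$ bound requires carefully propagating the weighted Hessian estimate through the rescaling argument on dyadic annuli, and this is the technical core of the Liouville-type result.
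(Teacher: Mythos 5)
Your argument is essentially the paper's own proof: existence and uniqueness from the $e^{|x|^2/8}L^2$ lemma, the Hessian estimate of Proposition \ref{HessianEstimate}, and then the radial ODE $\frac{d}{dr}(u/r)=(ru_r-u)/r^2$ with $ru_r-u=2(\Delta u-E)=O(|x|^{-1})$ used to extract $c(\theta)=\lim_{r\to\infty}u/r$ and the remainder $f$, with the anisotropic bound read off directly from the equation. The one step you omit is the weighted bound on the full first gradient, $\nabla f\in C_\mathrm{hom}^{0,\alpha}(\Omega,|x|^{-1})$, which is part of membership in $C_\mathrm{hom}^{2,\alpha}(\Omega,|x|^{-1})$ and is not implied by your identity for $\vec{X}\cdot\nabla f$ (that controls only the radial part); the paper handles it separately from $f,\Delta f\in C_\mathrm{hom}^{0,\alpha}(\Omega,|x|^{-1})$ via the flat Green's function, and in your setup it follows by interpolation at unit scale between the bounds on $f$ and $D^2 f$.
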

\begin{proof}
Let $u\in C^{2,\alpha}_\mathrm{loc}(\Omega)$ be a solution to $\mathcal{L}u=E$. It follows from the weighted H\"o{}lder estimates in Proposition \ref{HessianEstimate} that
\beq\label{Hess_decay}
D_{x_ix_j}u\in C_\mathrm{hom}^{0,\alpha}(\Omega, |x|^{-1}),
\eeq
and hence we have $\Delta u\in C_\mathrm{hom}^{0,\alpha}(\Omega, |x|^{-1})$ and hence $w:=-\vec{X}\cdot\nabla u+u=E-\Delta u\in C_\mathrm{hom}^{0,\alpha}(\Omega, |x|^{-1})$. Solving for $u$ in polar coordinates ($|x|=r$), we get after imposing initial conditions $u_{|\partial\Omega}=0$ that (normalize here for simplicity the radius $R$ of $\partial\Omega$ to 1):
\begin{align}
&u(r,\theta)=c(\theta)r+v_0,\label{u_decomp}\\
&c(\theta):=-\int_1^\infty\frac{w(s,\theta)}{s^2}ds=\lim_{r\to\infty}\frac{u(r,\theta)}{r},\\
&v_0(x):=r\int_{r}^\infty\frac{w(s,\theta)}{s^2}ds. \label{v_0}
\end{align}
By (\ref{Hess_decay}) and (\ref{u_decomp}), and the lemma for homogeneous functions (\ref{HomoLemma}), we see that also $D_{x_ix_j}v_0\in C_\mathrm{hom}^{0,\alpha}(\Omega, |x|^{-1})$.

Now, $-\vec{X}\cdot\nabla v_0+v_0=-\vec{X}\cdot\nabla u+u=w\in C_\mathrm{hom}^{0,\alpha}(\Omega, |x|^{-1})$ from above. It follows easily from the formula (\ref{v_0}) for $v_0$ that $v_0\in C_\mathrm{hom}^{0,\alpha}(\Omega, |x|^{-1})$, and hence we see that also $\vec{X}\cdot\nabla v_0\in C_\mathrm{hom}^{0,\alpha}(\Omega, |x|^{-1})$.

It remains to show that the full gradient satisfies
\beq\label{Interpol}
\nabla v_0\in C_\mathrm{hom}^{0,\alpha}(\Omega, |x|^{-1}),
\eeq
Note for this, that
\beq
\Delta v_0 = E-\Delta (c(\theta)|x|)+\vec{X}\cdot\nabla v_0-v_0\in C_\mathrm{hom}^{0,\alpha}(\Omega, |x|^{-1}).
\eeq

Equation (\ref{Interpol}) follows now easily from this with $v_0\in C_\mathrm{hom}^{0,\alpha}(\Omega, |x|^{-1})$, by standard use of the Green's function for the ordinary flat Laplacian on the plane (see for example the estimate (10.1.30) in \cite{Jo}).

Hence we have shown that there is the desired Liouville decomposition, and the corresponding estimates follow.
\end{proof}

\section{Linearized equation on the initial surface $\mathcal{M}[\tau,\theta]$}

We let $\underline{a}:=8|\log\tau|$ and then $\mathcal{N}^\pm_y, \mathcal{N}^\pm_x$ are used to denote the connected components of $\{\mathcal{M}[\tau,\theta]: s\geq \underline{a}\}$. Let also $\mathcal{S}:=\mathcal{H}(\Sigma)$, where we denote by $\mathcal{H}$ the homothety by a factor of $\tau$.

\begin{definition}\label{NormsOnInitialSurface}
 Let $v\in C_\mathrm{loc}^{k,\alpha}(\mathcal{M})$. We identify $v$ with its restrictions to $\Sigma$, $\mathcal{N}^\pm_y$ and $\mathcal{N}^\pm_x$.  Then for $k= 0, 2$ we define the norm $\|v\|_{\mathcal{XS}^{k,\alpha}}$ to be the maximum of the following quantities, where $b_0=e^{-5\delta_s/\tau}$ and $b_2=e^{-5\delta_s/\tau}/\tau^{10}$:
\begin{itemize}
\item[(1)] $\tau^{1-k}\|v\circ \mathcal{H}\|_{C^{k, \alpha}(\Sigma, e^{- \gamma s}, g_\Sigma)}$, and
\item[(2)] $b_k^{-1}\|v\|_{\mathcal{CS}^{k,\alpha}(\mathcal{N}^+_x\setminus\mathcal{S}, |x|^{-1})}$, as given in Definition \ref{cone_spaces}.
\item[ (3)] $b_k^{-1} \|v\|_{C^{k, \alpha} (\mathcal{N}^\pm_y\setminus\mathcal{S},g_{\mathcal{N}^\pm_y})}$, and $b_k^{-1}\|v\|_{C^{k, \alpha} (\mathcal{N}_x^-\setminus\mathcal{S},g_{\mathcal{N}^\pm_y})}$.
\end{itemize}
We let be $\mathcal{XS}^{k, \alpha}(\mathcal{M})$ be the space of functions $v$ for which $\|v\|_{\mathcal{XS}^{k,\alpha}}<\infty$.
\end{definition}

\begin{lemma} \label{invertibility_on_ends}
Let $\mathcal{N}_i$ stand for any of the ends $\mathcal{N}^\pm_y$, $\mathcal{N}^\pm_x$. Then for $\tau>0$ sufficiently small the Dirichlet operator, for zero initial value on $\partial\mathcal{N}_i$,
\begin{equation} \notag
\mathcal{L}_{\mathcal{N}_i}: \mathcal{XS}_0^{2,\alpha}(\mathcal{N}_i) \rightarrow \mathcal{XS}^{0,\alpha}(\mathcal{N}_i)
\end{equation}
is invertible, with operator norm of the inverse bounded uniformly in  $\tau>0$.
\end{lemma}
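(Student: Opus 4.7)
\medskip

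The plan is to handle each of the four ends independently, since the restriction of the $\mathcal{XS}^{k,\alpha}$-norm to a single $\mathcal{N}_i$ decouples into exactly one of items (2) or (3) of Definition \ref{NormsOnInitialSurface}. Because $\mathcal{M}[\tau,\theta]$ is at the original scale, the coefficient in (\ref{SSEq_linearization}) is $\tau^2=1$: on the planar ends $\mathcal{N}^\pm_x\subset\mathcal{P}$ we have $|A|^2\equiv 0$ and $\mathcal{L}_{\mathcal{N}^\pm_x}=\mathcal{L}_{\mathcal{P}}$, while on the cap ends $\mathcal{N}^\pm_y\subset\mathcal{K}[\theta]$ the operator is a $C^1$-small (uniformly in $\tau$) perturbation of $\Delta+1$ on a fixed subregion of the round hemisphere of radius $2$. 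A key arithmetic observation is that $b_0/b_2=\tau^{10}$, so any Schauder-type bound $\|u\|_{C^{2,\alpha}}\leq C\|E\|_{C^{0,\alpha}}$ (or $\|u\|_{\mathcal{CS}^{2,\alpha}}\leq C\|E\|_{\mathcal{CS}^{0,\alpha}}$ in the non-compact setting) with constant $C$ independent of $\tau$ translates immediately into
\[
\|u\|_{\mathcal{XS}^{2,\alpha}(\mathcal{N}_i)}\;\leq\; C\,b_2^{-1}b_0\,\|E\|_{\mathcal{XS}^{0,\alpha}(\mathcal{N}_i)}\;=\; C\tau^{10}\,\|E\|_{\mathcal{XS}^{0,\alpha}(\mathcal{N}_i)},
\]
which is the uniform operator-norm bound sought.

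For the non-compact end $\mathcal{N}^+_x$, which is essentially the planar exterior $\Omega=\mathbb{R}^2\setminus B_{r[\theta]}(0)$, the Liouville-type Theorem \ref{liouville_type_result} applies verbatim and furnishes a unique $G_\tau$-equivariant solution $u=(c,f)\in\mathcal{CS}^{2,\alpha}(\Omega,|x|^{-1})$ of $\mathcal{L}_\mathcal{P}u=E$ with zero Dirichlet data, satisfying $\|u\|_{\mathcal{CS}^{2,\alpha}}\leq C\|E\|_{\mathcal{CS}^{0,\alpha}}$. The uniform $\mathcal{XS}$-bound then follows from the display above.

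For the three compact ends $\mathcal{N}^-_x$ and $\mathcal{N}^\pm_y$, the domain is essentially fixed as $\tau,\theta\to 0$ (its width is controlled by the geometric scale $\delta_s$), so standard Fredholm/Schauder theory for the elliptic Dirichlet problem reduces the claim to trivial $G_\tau$-equivariant Dirichlet kernel together with a uniform bound on $\mathcal{L}^{-1}$. On the disk $\mathcal{N}^-_x\subset\mathcal{P}$, the $\omega_\tau$-antiinvariance forces the Fourier modes $f_m(r)e^{im\varphi}$ around the $y$-axis to satisfy $m\in\{\pm k,\pm 3k,\ldots\}$ with $k=1/\tau$; by Bessel root asymptotics the first Dirichlet eigenvalue of $-\Delta$ is $\gtrsim k^2$, so $\|\mathcal{L}_\mathcal{P}^{-1}\|=O(\tau^2)$. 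On a cap end $\mathcal{N}^\pm_y$ only the $\sigma_\tau$- and $\xi_\tau$-invariances survive, admitting the axisymmetric mode $m=0$ as well as $m=\pm k,\pm 2k,\ldots$; the high-frequency part is controlled as on the disk, while for the axisymmetric $m=0$ mode one reduces by perturbation to the round hemisphere of radius $2$, where the axisymmetric Dirichlet eigenfunctions of $\mathcal{L}=\Delta+1$ are the odd Legendre polynomials $P_n(\cos\theta)$, $n=1,3,5,\ldots$, with eigenvalues $1-n(n+1)/4\in\{1/2,\,-2,\,-13/2,\ldots\}$, none of which is zero. Stability of these eigenvalues under the $O(\tau)+O(|\theta|)$ perturbation of the geometry and of the operator closes the argument. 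The main obstacle to expect is precisely this axisymmetric cap-mode, where the $G_\tau$-symmetry alone does not eliminate low frequencies and an explicit spectral calculation on the limiting round model is needed to verify kernel-triviality.
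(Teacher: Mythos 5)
Your proposal is correct in substance and follows the same overall architecture as the paper: the non-compact end $\mathcal{N}^+_x$ is handled by the Section 8 machinery (Theorem \ref{liouville_type_result}), the three compact ends by spectral analysis of the model operators plus perturbation, and the passage to the $\mathcal{XS}$-norms via the $b_0/b_2=\tau^{10}$ bookkeeping is exactly right. Where you genuinely diverge is on the compact pieces. The paper never invokes the symmetry group there: it computes the \emph{full} Dirichlet spectrum of the relevant stability operators in Appendix C -- on the hemisphere of radius $2$ the operator is conformally $\Delta_{\mathbb{S}^2_1}+4$ with spectrum $\{4-n(n+1)\}\not\ni 0$, and on the disk it is conjugate to the harmonic oscillator $\Delta-|x|^2/16+1$, again with no Dirichlet kernel -- so invertibility holds without any mode restriction. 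You instead use $G_\tau$-equivariance to kill all angular modes $|m|<k$ on the disk (correct: the $\omega_\tau$-antiinvariance forces $m\in\{\pm k,\pm 3k,\dots\}$) and to reduce the cap to the modes $m\in\{0,\pm k,\pm 2k,\dots\}$, then dispatch the surviving axisymmetric cap mode by the odd-Legendre computation $1-n(n+1)/4\ne 0$. Both routes close; yours buys the stronger quantitative bound $\|\mathcal{L}^{-1}\|=O(\tau^2)$ on the disk and correctly isolates the one mode the symmetry cannot remove, while the paper's is shorter and symmetry-free, which is why it extends painlessly to the $\theta$-family of caps by continuity of the spectrum.

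One spot where you are thinner than the paper: the ends $\mathcal{N}_i$ are not literally subsets of the configuration $\mathcal{C}[\theta]$. In the transition region just beyond $s=\underline{a}$ they are normal graphs over the plane/caps via the Scherk function $f$, so the metric and $|A|^2$ differ from the exact models. The paper quantifies this using the specific choice $\underline{a}=8|\log\tau|$ together with the decay $|f|\le Ce^{-s}$ from (\ref{f_estimate}), obtaining a $C^3$-difference of order $\tau^{-2}e^{-\underline{a}}=\tau^6$, which is what licenses the perturbation step. Your ``$O(\tau)+O(|\theta|)$ perturbation'' is the right idea but you should identify this graphical discrepancy as the source of the $\tau$-dependent error and note that it is also needed for the exterior end before quoting Theorem \ref{liouville_type_result} ``verbatim''.
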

\begin{proof}
For the exterior flat domain, this is what was proved in Section 8. For the flat disk and round spherical cap, we check the invertibility by computing the Dirichlet spectrum of the stability operator $\mathcal{L}$ on these surfaces, using a perturbation argument to extend the property to the $\theta$-family of spherical caps (by possibly taking $\delta_\theta$ smaller). These spectrum computations can be found in the Appendix.

Note however that we are considering the region of $\tau\Sigma[\tau,\theta]$, very near the removed circle, and here the initial surface $\mathcal{M}[\tau,\theta]$ and hence the ends $\mathcal{N}$, do not exactly coincide with the subsets of the configuration $\mathcal{C}[\theta]$. The difference is on each piece a small normal graph with compact support, coming from the function $f(s,z)$ describing Scherk's surface as a graph over its four asymptotic planes. But by construction and the estimates (\ref{f_estimate}) we verify that the cut-off $\underline{a}=8\log\tau$ is appropriately large, since for the two induced metrics in question,
\[
\|g_{\mathcal{N}_i}-g_{\mathcal{C}[\theta]}:C^3(\{\tau\Sigma[\tau,\theta]:s\geq\underline{a}\},g_{\mathcal{C}[\theta]})\|\leq C\tau^{-2}e^{-\underline{a}}=C\tau^6,
\]
and similarly for the induced second fundamental forms $|A|^2$, and hence the lemma follows for small enough $\tau>0$ by a perturbation within the compact domain $\{\tau\Sigma[\tau,\theta]:s\geq\underline{a}\}$, for the quantities used in the definition of $\mathcal{L}$.
\end{proof}

Note that the property (\ref{L_bounded}) extends so that also $\mathcal{L}_\mathcal{M}$, the linearized operator of $H-\langle\vec{X},\vec{\nu}\rangle$ over $\mathcal{M}$, is a bounded map from the H\"o{}lder cone space.

\begin{definition}
Let $\Theta: [-\delta_\theta,\delta_\theta] \to C^{\infty} (\mathcal{M})$ be given by
	\[
	\Theta(\theta) =\frac{1}{\tau}\mathcal{H}^*(\theta w),
	\]
where $\theta \in [-\delta_\theta,\delta_\theta]$.
\end{definition}

\begin{theorem}\label{LinearOnAll}
Given $E \in \mathcal{XS}^{0,\alpha}(\mathcal{M})$, there exist  $v_E \in \mathcal{XS}^{2,\alpha}(\mathcal{M})$ and $\theta_E \in \Reals$, such that
	\[
	\mathcal{L}_{\mathcal{M}}v_E=E+\Theta(\theta_E),
	\]
and 
	\[
	\| v_E \|_{\mathcal{XS}^{2,\alpha}} \leq C \|E\|_{\mathcal{XS}^{0,\alpha}}, |\theta_E |\leq C \|E\|_{\mathcal{XS}^{0,\alpha}}, 
	\]
\end{theorem}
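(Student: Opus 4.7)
The argument is an iterative cut-and-paste scheme combining the two invertibility results already in hand: Proposition \ref{invertibility} on $\Sigma$ (corresponding to the Scherk region $\mathcal{S} = \mathcal{H}(\Sigma) \subset \mathcal{M}$ via the homothety $\mathcal{H}$), and Lemma \ref{invertibility_on_ends} on each of the four ends $\mathcal{N}_i \in \{\mathcal{N}^\pm_x, \mathcal{N}^\pm_y\}$. Given $E \in \mathcal{XS}^{0,\alpha}(\mathcal{M})$, I fix a smooth $G_\tau$-equivariant partition of unity $\{\psi_\mathcal{S}, \psi_1, \ldots, \psi_4\}$ subordinate to $\{\mathcal{S}, \mathcal{N}_1, \ldots, \mathcal{N}_4\}$ and split $E = \psi_\mathcal{S} E + \sum_i \psi_i E$. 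Applying Proposition \ref{invertibility} to the pullback of $\psi_\mathcal{S} E$ by $\mathcal{H}$ (scaled by the appropriate $\tau$-factor reconciling $\mathcal{L}_\Sigma$ with $\mathcal{H}^*\mathcal{L}_\mathcal{M}$) yields a constant $b^{(0)}$ and a function $u_\Sigma \in C^{2,\alpha}(\Sigma)$ vanishing on $\partial\Sigma$ with $\mathcal{L}_\Sigma u_\Sigma$ equal to the source minus $b^{(0)} w$. Applying Lemma \ref{invertibility_on_ends} to each $\psi_i E$ produces $v_i$ on $\mathcal{N}_i$ vanishing on $\partial\mathcal{N}_i$, with uniform bounds in $\tau$.

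Next I assemble a global approximate solution
\beq
v^{(0)} := \tilde{\psi}_\mathcal{S}\,(u_\Sigma\circ\mathcal{H}^{-1}) + \sum_i \tilde{\psi}_i v_i,
\eeq
using cutoffs $\tilde{\psi}$ that equal $1$ on the supports of the original $\psi$'s and are supported inside the respective pieces. Then
\beq
\mathcal{L}_\mathcal{M} v^{(0)} = E + \Theta(\theta^{(0)}) + \mathcal{E}^{(0)},
\eeq
where $\theta^{(0)}$ is chosen so that $\Theta(\theta^{(0)}) = \tau^{-1}\mathcal{H}^*(\theta^{(0)} w)$ reproduces the $b^{(0)} w$ term, and
\beq
\mathcal{E}^{(0)} = [\mathcal{L}_\mathcal{M}, \tilde{\psi}_\mathcal{S}]\,(u_\Sigma\circ\mathcal{H}^{-1}) + \sum_i [\mathcal{L}_\mathcal{M}, \tilde{\psi}_i]\,v_i + \text{(metric-mismatch terms)},
\eeq
the mismatch terms being negligible by the estimates used in the proof of Lemma \ref{invertibility_on_ends}. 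Each commutator is supported in a thin transition annulus, one near $\partial\mathcal{S}$ (at $s=5\delta_s/\tau$) and four near $\partial\mathcal{N}_i$ (at $s=\underline{a}=8|\log\tau|$).

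The decisive estimate is $\|\mathcal{E}^{(0)}\|_{\mathcal{XS}^{0,\alpha}} \leq \mu(\tau)\|E\|_{\mathcal{XS}^{0,\alpha}}$ with $\mu(\tau)\to 0$. In the Scherk transition annulus, the exponential decay of $u_\Sigma$ in its weighted Hölder norm gives $|u_\Sigma|,|\nabla u_\Sigma|\lesssim e^{-5\gamma\delta_s/\tau}$ pointwise; the $e^{\gamma s}$ weight in the Scherk component of the $\mathcal{XS}^{0,\alpha}$-norm cancels this decay exactly, and the $\tau^{1-k}$ normalization (with $k=0$) together with the $\tau$-loss from the $\mathcal{L}_\Sigma$-to-$\mathcal{H}^*\mathcal{L}_\mathcal{M}$ scaling delivers the required small factor. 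In the end transition annuli, the Dirichlet condition $v_i|_{\partial\mathcal{N}_i}=0$ and interior Schauder estimates supply smallness; moreover, since these annuli sit in $\mathcal{N}_i\cap\mathcal{S}$ while the end component of the $\mathcal{XS}^{0,\alpha}$-norm is computed only on $\mathcal{N}_i\setminus\mathcal{S}$, these commutators must only be controlled in the Scherk norm, which again yields the small factor.

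Iterating the one-step procedure on $-\mathcal{E}^{(n)}$ at each stage and summing the resulting geometric series gives $v_E = \sum_{n\geq 0} v^{(n)}$ and $\theta_E = \sum_{n\geq 0} \theta^{(n)}$ satisfying $\mathcal{L}_\mathcal{M} v_E = E + \Theta(\theta_E)$ with the claimed bounds. The main obstacle is the mixed-norm error estimate above: the $\mathcal{XS}^{0,\alpha}$-norm juxtaposes the Scherk weight $\tau^{1-k}e^{-\gamma s}$ with the much stricter end weight $b_k^{-1}\sim e^{5\delta_s/\tau}$, and controlling the patching at the seam requires two design features of the norm, namely the exclusion of $\mathcal{S}$ from the end-integration domains (so that commutators sitting in the overlap are measured only in the Scherk norm), and the logarithmic choice $\underline{a}=8|\log\tau|$ (which places $\partial\mathcal{N}_i$ at the precise radius where the $O(\tau^6)$ metric and second-fundamental-form discrepancies of Lemma \ref{invertibility_on_ends} are negligible).
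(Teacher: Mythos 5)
Your overall strategy (invert locally on $\Sigma$ via Proposition \ref{invertibility} and on the ends via Lemma \ref{invertibility_on_ends}, then remove the patching errors by iterating a geometric series) is the right one, but the symmetric partition-of-unity assembly has a genuine gap at the outer seam $s\approx 5\delta_s/\tau$: the commutator $[\mathcal{L}_\mathcal{M},\tilde{\psi}_\mathcal{S}](u_\Sigma\circ\mathcal{H}^{-1})$ is \emph{not} small in $\mathcal{XS}^{0,\alpha}$. It is supported inside $\mathcal{S}$, hence measured in the Scherk component of the norm, whose weight $e^{+\gamma s}$ exactly cancels the $e^{-\gamma s}$ decay of $u_\Sigma$ furnished by Proposition \ref{invertibility} --- as you yourself observe. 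The remaining bookkeeping does not rescue you: the prefactors $\tau^{1-k}$ and the $\tau$-rescalings relating $\mathcal{L}_\Sigma$ to $\mathcal{H}^*\mathcal{L}_\mathcal{M}$ are calibrated precisely so that $\mathcal{L}_\mathcal{M}\colon\mathcal{XS}^{2,\alpha}\to\mathcal{XS}^{0,\alpha}$ is bounded with norm $O(1)$, and a cutoff commutator is ``one derivative's worth'' of $\mathcal{L}_\mathcal{M}$, so it too is only $O(\|E\|_{\mathcal{XS}^{0,\alpha}})$. There is no factor $\mu(\tau)\to 0$, and your iteration does not contract. A similar issue afflicts your inner-seam terms: at $s\approx\underline{a}$ the Scherk weight is $e^{\gamma\underline{a}}=\tau^{-8\gamma}$, which is large, and in your scheme $v_i$ need not be small there because its source $\psi_i E$ may live right up against $\partial\mathcal{N}_i$.

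The paper's proof is sequential rather than symmetric, precisely to dispose of the outer-seam term. One first solves on $\Sigma$ for the source $\psi E_{n-1}$ (with $\psi=\psi[5\delta_s/\tau,\,5\delta_s/\tau-1]\circ s$), obtaining $v$; one then feeds the \emph{entire} leftover $(1-\psi^2)E_{n-1}-[\mathcal{L}_\mathcal{M},\psi]v$ --- including the outer-seam commutator --- as the source for the Dirichlet problem on the union of the ends, obtaining $v'$. Setting $v_n=\psi v+\psi'v'$ then cancels the outer-seam commutator identically, as in (\ref{telescope}), and the only residual error is $-[\mathcal{L}_\mathcal{M},\psi']v'$, supported in the inner annulus $s\in[\underline{a},\underline{a}+1]$. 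Its smallness is not a consequence of the two ``design features'' you invoke; it comes from the fact that $v'$ vanishes on $\partial\mathcal{N}$ at $s=\underline{a}$ while its source is supported at $s\geq 5\delta_s/\tau-1$, so $v'$ is of size $e^{-(5\delta_s/\tau-1)}$ on the inner annulus; after paying the weight $e^{\gamma(\underline{a}+1)}=O(\tau^{-8\gamma})$ and a polynomial factor one still retains the contraction factor $Ce^{-\delta_s/\tau}$. To keep a symmetric decomposition you would need the $\Sigma$-solution to decay strictly faster than the weight in which the error is measured (two distinct rates $\gamma'<\gamma$), which is a different argument from the one you give and would force changes elsewhere in the norm hierarchy.
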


\begin{proof}

Let the cut-off functions $\psi:= \psi[5\delta_s /\tau, 5\delta_s /\tau -1]\circ s$ as well as $\psi':= \psi[\underline a ,\underline a+1 ]\circ s$ be given on $\mathcal{M}$, and let $\underline a=8 |\log \tau|$. 

The starting point of our iteration is $E_0:=E$. Applying Proposition \ref{invertibility} to $\Sigma=\Sigma[\tau,\theta]=\mathcal{H}^{-1}(\mathcal{S})$ with the cut-off source term $E':=\tau(\psi E_{n-1})\circ\mathcal{H}$. From the corresponding $v_E$ we get $v:=\tau\mathcal{H}_\ast(v_E)$ and we let the $\theta_n:=\theta_{E'}$. By construction we have thus on $\mathcal{S}$ that
\[
\mathcal{L}_{\mathcal{M}}v=\psi E_{n-1}+\Theta(\theta_n).
\]

We now feed the new source term $E''=(1-\psi^2)E_{n-1}-[\mathcal{L}_{\mathcal{M}},\psi]v$ into the equation on the union of the ends $\mathcal{N}_{\cup}:=\mathcal{N}_y\cup\mathcal{N}^-_x\cup\mathcal{N}^+_x$ (here the commutator is by definition $[\mathcal{L}_{\mathcal{M}}, \psi] f:= \mathcal{L}_{\mathcal{M}} (\psi) f - \psi(\mathcal{L}_{\mathcal{M}} f)$), and obtain a solution $v_{E''}$ which we call $v'$,
\[
\mathcal{L}_{\mathcal{M}}v'=(1-\psi^2)E_{n-1}-[\mathcal{L}_{\mathcal{M}},\psi]v.
\]

We then finally define
\[
v_n:=\psi v+\psi'v'.
\]
By considering the supports of $\psi,\psi'$ and $[\mathcal{L}_{\mathcal{M}}, \psi]$, we see that
\beq\label{telescope}
	\mathcal{L}_{\mathcal{M}} v_n = E_{n-1}+[\mathcal{L}_{\mathcal{M}}, \psi'] v'+\Theta(\theta_n).
\eeq
We then also define the new source term $E_n=- [\mathcal{L}_{\mathcal{M}}, \psi'] v'$. Again, we use the fact that $[\mathcal{L}_{\mathcal{M}},\psi']$ is supported on $[\underline a, \underline a+1]$, use Lemma \ref{invertibility_on_ends}, and estimate (for $\tau$ sufficiently small),
\begin{align*}
 \| E_n\|_{\mathcal{XS}^{0,\alpha}} &= \tau\|E_n\circ\mathcal{H}:C^{0,\alpha}(\Sigma,g_\Sigma, e^{-\gamma s})\|\\
 &\leq\tau e^{\gamma(\underline a+1)}\|[\mathcal{L}_{\mathcal{M}},\psi']v'\circ\mathcal{H}:C^{0,\alpha}(\Sigma_{[\underline a, \underline a+1]},g_\Sigma)\|\\
 &\leq C \tau^{-p_0} e^{\gamma(\underline a+1)}\|v'\circ\mathcal{H}:C^{2,\alpha}(\Sigma_{[\underline a, \underline a+1]},g_\Sigma)\|\\
 &\leq C \tau^{-p_0'}e^{\gamma(\underline a+1)}e^{-\left(\frac{5\delta_s}{\tau}-1\right)}\|(1-\psi^2)E_{n-1}-[\mathcal{L}_{\mathcal{M}},\psi]v\|_{\mathcal{XS}^{0,\alpha}},
\end{align*}
where we used in the third line the uniform control of the geometry of $\Sigma$ in the strips $s\in[\underline{a},\underline{a}+1]$, and in the third line the Definition \ref{NormsOnInitialSurface}, and the fact that the term considered in the last line has support in $s\in [\frac{5\delta_s}{\tau}-1,\frac{5\delta_s}{\tau}]$, we thus get
\begin{align*}
 \| E_n\|_{\mathcal{XS}^{0,\alpha}} &\leq C\tau^{-p_0'}e^{\gamma(\underline a+1)}e^{-\left(\frac{5\delta_s}{\tau}-1\right)}\|E_{n-1}\|_{\mathcal{XS}^{0,\alpha}}\\
 &\leq C e^{-\frac{\delta_s}{\tau}}\|E_{n-1}\|_{\mathcal{XS}^{0,\alpha}}.
\end{align*}

We define $v_E := \sum_{n=1}^{\infty} v_n$ and $\theta_{E} := \sum_{n=1}^{\infty} \theta_n$. The first sum converges in the Banach space $\mathcal{XS}^{2,\alpha}(\mathcal{M})$, the second converges to some real number which is the $\theta_E$, with the desired estimates. The function  $v_E$ then satisfies $\mathcal{L}_{\mathcal{M}} v_E  =E + \Theta(\theta_E)$.
\end{proof}

\begin{definition} \label{complicated_term_shorthand}
Let $\mathcal{S}$ be a smooth surface (possibly with boundary). For a function $v\in C^{2, \alpha}(\mathcal{S})$ for which $\mathcal{S}_v$ is a $C^{2, \alpha}$-surface, we define on $\mathcal{S}$:
\[
\mathcal{F}_{\mathcal{S}}(v):=H_{\mathcal{S}_v}-\fracsm{1}{2}\langle\vec{X},\vec{\nu}_{\mathcal{S}_v}\rangle,
\]
and denote $\mathcal{F}_{\mathcal{S}}:=\mathcal{F}_{\mathcal{S}}(0)$.
\end{definition}

\begin{corollary}\label{CorLinear}
There are $v_\mathcal{F} \in \mathcal{XS}^{2,\alpha}(\mathcal{M})$ and $\theta_\mathcal{F}$ such that 
	\begin{gather*}
	\mathcal{L}_\mathcal{M}v_\mathcal{F} = \mathcal{F}_\mathcal{M} +\Theta(\theta_\mathcal{F}) w,\\
	|\theta_\mathcal{F} - \theta| \leq C \tau, \quad \| v_\mathcal{F}\|_{\mathcal{XS}^{2,\alpha}} \leq C \tau,
	\end{gather*}
where $\mathcal{M} = \mathcal{M}[\tau, \theta]$.
\end{corollary}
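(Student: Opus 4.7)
The plan is to apply Theorem \ref{LinearOnAll} to the source $E = \mathcal{F}_\mathcal{M}$ and extract the sharpened bounds from the iterative construction in that proof. The preliminary observation is that $\mathcal{M}[\tau,\theta]$ agrees off the bent-Scherk region $\mathcal{S} := \tau\Sigma[\tau,\theta]$ with pieces of exact self-shrinkers (the plane $\mathcal{P}$ through the origin and the caps $\mathcal{K}[\theta]$), so $\mathcal{F}_\mathcal{M}$ is supported in $\mathcal{S}$; likewise $\Theta(\theta)$ is supported in $\mathcal{S}$ since $w$ is supported on $\{\Sigma : s \leq 1\}$ by Lemma \ref{w_properties}(1). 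On $\mathcal{S}$ the rescaling identities
\[
\mathcal{F}_\mathcal{M}|_\mathcal{S}\circ \mathcal{H} = \tau^{-1}\bigl(H_\Sigma - \fracsm{1}{2}\tau^2 \vec X\cdot \vec\nu_\Sigma\bigr), \qquad \Theta(\theta)|_\mathcal{S}\circ\mathcal{H} = \tau^{-1}\theta w,
\]
hold, so combining them with Lemma \ref{H_decomp} and the $\tau$-weighting in Definition \ref{NormsOnInitialSurface} yields $\|\mathcal{F}_\mathcal{M} - \Theta(\theta)\|_{\mathcal{XS}^{0,\alpha}} \leq C(\tau + \theta^2) \leq C\tau$, where the last step absorbs the quadratic term by the smallness of $\delta_\theta$ relative to $\delta_\tau$.

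Next I apply Theorem \ref{LinearOnAll} directly to $\mathcal{F}_\mathcal{M}$, producing $v_\mathcal{F}$ and $\theta_\mathcal{F}$ with $\mathcal{L}_\mathcal{M}v_\mathcal{F} = \mathcal{F}_\mathcal{M} + \Theta(\theta_\mathcal{F})$. The theorem's abstract norm bound alone only gives $|\theta_\mathcal{F}|,\ \|v_\mathcal{F}\|_{\mathcal{XS}^{2,\alpha}} \leq C\|\mathcal{F}_\mathcal{M}\|_{\mathcal{XS}^{0,\alpha}} = O(|\theta|+\tau)$, too weak for the required $|\theta_\mathcal{F} - \theta| \leq C\tau$. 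To sharpen it I unpack the first iteration: there, Proposition \ref{invertibility}, through the explicit $L^2$-projection formula $b = \langle\bar E, x\rangle/(8\pi)$ from Proposition \ref{unperturbed_invertibility}, selects $\theta_1$ essentially as the projection of the rescaled source $E' = \tau\psi\mathcal{F}_\mathcal{M}\circ\mathcal{H}$ onto the kernel direction $x$ on $\mathbb{S}^2$. Feeding in the Taylor decomposition $E' = \theta w + O(\tau+\theta^2)$ from Lemma \ref{H_decomp} and the normalization $\int w\,(\vec e_x\cdot\vec\nu) = 8\pi$ from Lemma \ref{w_properties}(3), this projection equals $\theta$ up to an $O(\tau)$ error. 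Subsequent iterations produce only exponentially small commutator corrections of order $e^{-c/\tau}$ (this is precisely the geometric decay $\|E_n\| \leq Ce^{-\delta_s/\tau}\|E_{n-1}\|$ established in the proof of \ref{LinearOnAll}), so $\theta_\mathcal{F} = \sum_n \theta_n = \theta + O(\tau)$ with the appropriate sign convention. An analogous accounting -- bounding each $v_n$ by the \emph{non-kernel} part of its source, which at the first step is $\mathcal{F}_\mathcal{M} - \Theta(\theta) = O(\tau)$ by the residual estimate -- yields $\|v_\mathcal{F}\|_{\mathcal{XS}^{2,\alpha}} \leq C\tau$.

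The main obstacle is precisely this refined kernel-projection tracking, since the theorem's blanket norm bound is insufficient: $\mathcal{F}_\mathcal{M}$ itself is not of size $\tau$, its dominant part being the kernel contribution $\Theta(\theta)$. One must exploit the Taylor-type decomposition $\mathcal{F}_\mathcal{M} = \Theta(\theta) + O(\tau)$ from Lemma \ref{H_decomp} \emph{inside} the first step of the iteration rather than treating $\mathcal{F}_\mathcal{M}$ as an abstract element of $\mathcal{XS}^{0,\alpha}$. Once that is done, both claimed estimates fall out essentially by construction, and the smooth parameter-dependence inherent in the construction of the surfaces $\Sigma[\tau,\theta]$ (together with the continuous dependence asserted in Proposition \ref{invertibility}) ensures that $(v_\mathcal{F},\theta_\mathcal{F})$ can be chosen to depend continuously on $(\tau,\theta)$, which will be needed for the Schauder fixed-point argument in Section 11.
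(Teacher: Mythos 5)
The paper states this as a corollary without a written proof; the intended derivation is the one--line consequence of Theorem \ref{LinearOnAll} and Lemma \ref{H_decomp} that you set up in your first paragraph and then abandon. Since $\Theta$ is linear in its argument, apply Theorem \ref{LinearOnAll} not to $\mathcal{F}_\mathcal{M}$ but to the shifted source $E:=\mathcal{F}_\mathcal{M}-\Theta(\theta)$, whose $\mathcal{XS}^{0,\alpha}$-norm you have already bounded by $C(\tau+\theta^2)$ via the rescaling identity, the support observation, and Lemma \ref{H_decomp}. The theorem returns $(v_E,\theta_E)$ with $\mathcal{L}_\mathcal{M}v_E=\mathcal{F}_\mathcal{M}-\Theta(\theta)+\Theta(\theta_E)=\mathcal{F}_\mathcal{M}+\Theta(\theta_E-\theta)$, so $v_\mathcal{F}:=v_E$ and $\theta_\mathcal{F}:=\theta_E-\theta$ (modulo the paper's sign conventions) satisfy both estimates directly from the theorem's blanket bound $|\theta_E|,\ \|v_E\|\leq C\|E\|$. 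No reopening of the iteration is needed, and the kernel part is handled once and for all by linearity rather than tracked through the construction.

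Your route instead feeds the full source $\mathcal{F}_\mathcal{M}$, of size $O(|\theta|+\tau)$, into the machinery and verifies a posteriori that the kernel projection captures $\Theta(\theta)$ to within $O(\tau)$. The conclusion is right, but the accounting has a soft spot: the explicit projection formula $b=\langle\bar E,x\rangle/(8\pi)$ is available only in the unperturbed case of Proposition \ref{unperturbed_invertibility}. For general $(\tau,\theta)$, Proposition \ref{invertibility} produces $b_E$ by a Neumann-series perturbation, whose correction to $b$ is of order (operator-perturbation)$\,\times\,\|E\|$; the operator perturbation is made small by choosing $\delta_s$ small, and is \emph{not} $O(\tau)$, so with $\|E\|\sim|\theta|$ the correction to your $\theta_1$ is only $O(\epsilon|\theta|)$ rather than $O(\tau)$. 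Likewise your $v$-estimate needs the sharpened bound $\|v\|\leq C\|E-b_Ew\|$, which the stated propositions do not assert. Both issues evaporate if the source is shifted before the machinery is invoked. Two smaller points: the step ``$C(\tau+\theta^2)\leq C\tau$ by smallness of $\delta_\theta$ relative to $\delta_\tau$'' is false as stated (fix $\theta$ and let $\tau\to0$); one should either carry $C(\tau+\theta^2)$ or invoke $|\theta|\leq\zeta\tau$ from the fixed-point set in Section 11, where the corollary is actually used. And the extra factor of $w$ in the displayed equation of the corollary is a typo in the paper ($\Theta(\theta_\mathcal{F})$ already incorporates $w$), which you correctly ignore.
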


\section{The nonlinear terms in H\"older cone spaces}

\begin{proposition}\label{QuadraticImprovement}
Given $v\in\mathcal{XS}^{2,\alpha}(\mathcal{M})$ with $\|v\|_{\mathcal{XS}^{2,\alpha}}$ smaller than a suitable constant, we have that the graph $\mathcal{M}_v$ over $\mathcal{M}$, is a smooth immersion and moreover
\[
\mathcal{F}_\mathcal{M}(v)-\mathcal{F}_\mathcal{M}-\mathcal{L}_\mathcal{M} v\in \mathcal{XS}^{0,\alpha}(\mathcal{M}),
\]
with the quadratic improvement bounds:
\beq
\|\mathcal{F}_\mathcal{M}(v)-\mathcal{F}_\mathcal{M}-\mathcal{L}_\mathcal{M} v\|_{\mathcal{XS}^{0,\alpha}}\leq C\|v\|_{\mathcal{XS}^{2,\alpha}}^2.
\eeq
\end{proposition}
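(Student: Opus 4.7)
The plan is to express the remainder as the integrated second-order Taylor remainder
\[
Q(v):=\mathcal{F}_\mathcal{M}(v)-\mathcal{F}_\mathcal{M}-\mathcal{L}_\mathcal{M} v=\int_0^1(1-t)\,\tfrac{d^2}{dt^2}\mathcal{F}_\mathcal{M}(tv)\,dt,
\]
and then estimate $Q(v)$ piecewise on $\Sigma$, on the compact ends $\mathcal{N}^\pm_y$ and $\mathcal{N}^-_x$, and on the exterior planar end $\mathcal{N}^+_x$, in the norms of Definition \ref{NormsOnInitialSurface}. Writing $\mathcal{M}_v$ as the normal graph $p\mapsto p+v(p)\vec\nu_\mathcal{M}(p)$, standard geometric expansions show that $Q(v)$ is a finite sum of schematic terms of the form $a(\mathcal{M})\,(\partial^{\beta_1}v)(\partial^{\beta_2}v)\,R(v,\nabla v)$ with each $|\beta_i|\leq 2$, at most two second-derivative factors, and $R$ a bounded smooth function of its arguments when $\|v\|_{C^1}$ is small; in addition, the $\tfrac{1}{2}\langle X,\vec\nu_{\mathcal{M}_v}\rangle$ term contributes expressions of type $(v-X\cdot\nabla v)\cdot R(\nabla v)\cdot|\nabla v|^2$. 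Smallness of $\|v\|_{\mathcal{XS}^{2,\alpha}}$ ensures smooth embeddedness of the graph via the standard $C^1$-criterion.

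On the central piece $\Sigma$, after pulling back by $\mathcal{H}$ all relevant geometric coefficients (including $\tau^2|X|$) are uniformly bounded on $\Sigma[\tau,\theta]$, and $\|v\circ\mathcal{H}\|_{C^{2,\alpha}(\Sigma,g_\Sigma,e^{-\gamma s})}\leq\tau\|v\|_{\mathcal{XS}^{2,\alpha}}$. The product estimate in the exponentially weighted Hölder space, absorbing the extra factor via $e^{-2\gamma s}\leq e^{-\gamma s}$, yields $\|Q(v)\circ\mathcal{H}\|_{C^{0,\alpha}(\Sigma,e^{-\gamma s})}\leq C\tau^2\|v\|^2_{\mathcal{XS}^{2,\alpha}}$, which contributes $C\tau^3\|v\|^2_{\mathcal{XS}^{2,\alpha}}$ to $\|Q(v)\|_{\mathcal{XS}^{0,\alpha}}$. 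On the compact ends $\mathcal{N}^\pm_y$ and $\mathcal{N}^-_x$ of fixed geometry, the quadratic estimate is immediate from the standard Hölder algebra property together with $b_2^2/b_0=e^{-5\delta_s/\tau}\tau^{-20}\to 0$.

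The main obstacle is the exterior planar end $\mathcal{N}^+_x$, where $|X|$ grows linearly and one must verify $Q(v)\in C^{0,\alpha}_\mathrm{hom}(\Omega,|x|^{-1})$. Write $v=c(\theta)|x|+f$ with $(c,f)\in\mathcal{CS}^{2,\alpha}(\Omega,|x|^{-1})$. The crucial cancellation is Euler's identity $X\cdot\nabla(c|x|)=c|x|$, so the linear-in-$v$ quantity $v-X\cdot\nabla v$ reduces to $f-X\cdot\nabla f$ and thus lies in $C^{0,\alpha}_\mathrm{hom}(\Omega,|x|^{-1})$ thanks to the anisotropic condition in Definition \ref{aniso_spaces}. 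The dangerous shrinker-type remainder then factors as
\[
\tfrac{v-X\cdot\nabla v}{\sqrt{1+|\nabla v|^2}}-(v-X\cdot\nabla v)=-\tfrac{1}{2}(v-X\cdot\nabla v)|\nabla v|^2+O\bigl(|\nabla v|^4\,|v-X\cdot\nabla v|\bigr),
\]
so the growing factor $|X|$ never appears uncoupled from the already-decaying $v-X\cdot\nabla v$. Combined with the uniform boundedness of $|\nabla v|$ (since $\nabla(c|x|)$ is degree zero and $\nabla f=O(|x|^{-1})$) and $|\nabla^2 v|=O(|x|^{-1})$, the quadratic mean-curvature remainder of schematic type $(\nabla v)^{\otimes 2}\otimes\nabla^2 v$ also lands in $C^{0,\alpha}_\mathrm{hom}(\Omega,|x|^{-1})$, with Hölder seminorms controlled via the algebra property of the homogeneous spaces (products of $|x|^{-j}$ and $|x|^{-k}$ sit in $|x|^{-j-k}$). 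The most delicate bookkeeping is keeping the terms involving $X\cdot\nabla v$ in the anisotropic half of the cone space, which uses crucially that $X\cdot\nabla f\in C^{0,\alpha}_\mathrm{hom}(\Omega,|x|^{-1})$ is encoded in the $\mathcal{CS}^{2,\alpha}$ norm. Summing the three regions, together with bounded corrections from the cut-off transitions, gives the claimed bound $C\|v\|^2_{\mathcal{XS}^{2,\alpha}}$.
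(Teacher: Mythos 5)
Your argument is correct and follows essentially the same route as the paper's: the heart of both is the exterior planar end, where your use of Euler's identity $\vec{X}\cdot\nabla(c|x|)=c|x|$ together with the anisotropic norm on $\vec{X}\cdot\nabla f$ is exactly the paper's ``crucial mapping property'' (\ref{L_bounded}) of $\mathcal{L}_{\mathcal{P}}$ on the cone spaces, and your treatment of the remaining terms $\Hess v(\nabla v,\nabla v)(1+|\nabla v|^2)^{-3/2}$ and $(\mathcal{L}_{\mathcal{P}}v)[(1+|\nabla v|^2)^{-1/2}-1]$ via the algebra property of $C^{0,\alpha}_{\mathrm{hom}}$ matches the paper's estimates of its terms $T_1$ and $T_2$. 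The treatment of the core piece and the compact ends by uniform control of the geometry and the weighted H\"older algebra (with $e^{-2\gamma s}\leq e^{-\gamma s}$) likewise coincides with the paper's appeal to (\ref{FullVariation}) and \cite{Ka97}, so only the cosmetic choice of the integrated Taylor remainder versus the explicit graph formula distinguishes the two write-ups.
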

\begin{proof}
We first deal with the argument needed on the exterior plane $\Omega=\Reals^2\setminus B_R(0)$. Note that $\vec{\nu}\equiv\vec{e_3}$ and the terms for the equation (\ref{SSEq}) read
\beq
\mathcal{F}_\Omega(v)=-\frac{\Hess v(\nabla v,\nabla v)}{(1+|\nabla v|^2)^{3/2}}+\frac{\mathcal{L}_{\mathcal{P}}v}{\sqrt{1+|\nabla v|^2}},
\eeq
where $\mathcal{L}_{\mathcal{P}}$ is again the linearized operator from (\ref{EqPlane}).

Thus we see that for the exterior plane $\Omega$ we have
\begin{align*}
\mathcal{F}_\Omega(v)-\mathcal{F}_\Omega-\mathcal{L}_\Omega v&=-\frac{\Hess v(\nabla v,\nabla v)}{(1+|\nabla v|^2)^{3/2}}+\left[(1+|\nabla v|^2)^{-1/2}-1\right]\mathcal{L}_{\mathcal{P}}v\\
&=:T_1+T_2.
\end{align*}

Let us first estimate the weighted sup-norm. By the Bernoulli inequalities we have the quadratic bounds:
\beq\label{Elementary}
\begin{split}
&\left|(1+|\nabla v|^2)^{-1/2}-1\right|\leq\fracsm{1}{2}|\nabla v|^2,\\
&\left|(1+|\nabla v|^2)^{-3/2}-1\right|\leq\fracsm{3}{2}|\nabla v|^2.
\end{split}
\eeq

We can now estimate the supremum part of the weighted norms:
\begin{align*}
|x|\left|(\mathcal{F}_\Omega(v)-\mathcal{F}_\Omega (0)-\mathcal{L}_\Omega v)(x)\right|&\leq|x||\Hess v||\nabla v|^2+\fracsm{1}{2}|x||\nabla v|^2|\mathcal{L}_{\mathcal{P}}v|\\
&\leq \|v\|_{\mathcal{CS}^{2,\alpha}}^3+\fracsm{1}{2}\|v\|_{\mathcal{CS}^{2,\alpha}}^3= \fracsm{3}{2}\|v\|_{\mathcal{CS}^{2,\alpha}}^3,
\end{align*}
on the exterior of the disk, where we used again the crucial mapping property (\ref{L_bounded}) on the H\"o{}lder cone spaces.

Similar but slightly more involved computations now show that the H\"o{}lder coefficients in the norm are also estimated as claimed. For example it follows by (\ref{Elementary}) that
\begin{align*}
\frac{\left|(1+|\nabla v(x)|^2)^{-1/2}-(1+|\nabla v(y)|^2)^{-1/2}\right|}{|x-y|^\alpha}&\leq\frac{|\nabla v(x)|+|\nabla v(y)|}{2(1+|\nabla v(y)|^2)^{3/2}}\frac{\big|\nabla v(x)-\nabla v(y)\big|}{|x-y|^\alpha}\\
&\leq \left[|x|^{-\alpha}+|y|^{-\alpha}\right]\|v\|^2_{\mathcal{CS}^{2,\alpha}(\Omega,|x|^{-1})}.
\end{align*}
It follows easily that
\beq
T_2:=\left[\frac{1}{\sqrt{1+|\nabla v|^2}}-1\right]\mathcal{L}_{\mathcal{P}}v\in C_\mathrm{hom}^{0,\alpha}(\Omega,|x|^{-1}),
\eeq
since by assumption $\mathcal{L}_{\mathcal{P}}v\in C_\mathrm{hom}^{0,\alpha}(\Omega,|x|^{-1})$ and also $\nabla v\in C_\mathrm{hom}^{0,\alpha}(\Omega,|x|^{0})$. We get the corresponding higher order bounds as follows. Assume without loss of generality that $|x|\geq|y|$, which is reflected in how we distribute terms, and recall the estimates (\ref{HomoLemma}) in Lemma \ref{LemmaHomo}:
\begin{align*}
\frac{|T_2(x)-T_2(y)|}{|x-y|^\alpha}&\leq \frac{\left|(1+|\nabla v(x)|^2)^{-1/2}-(1+|\nabla v(y)|^2)^{-1/2}\right|}{|x-y|^\alpha}|(\mathcal{L}_\mathcal{P}v)(x)|\\
&\quad +\fracsm{1}{2}|\nabla v|^2\frac{|(\mathcal{L}_\mathcal{P}v)(x)-(\mathcal{L}_\mathcal{P}v)(y)|}{|x-y|^\alpha}\\
&\leq \left[|x|^{-\alpha}+|y|^{-\alpha}\right]\|v\|^2_{\mathcal{CS}^{2,\alpha}(\Omega,|x|^{-1})}\frac{\|\mathcal{L}_\mathcal{P}v\|_{C_\mathrm{hom}^{0,\alpha}(\Omega,|x|^{-1})}}{|x|}\\
&\quad+\fracsm{1}{2}\left[|x|^{-1-\alpha}+|y|^{-1-\alpha}\right]\|v\|^2_{\mathcal{CS}^{2,\alpha}(\Omega,|x|^{-1})}\|\mathcal{L}_\mathcal{P}v\|_{C_\mathrm{hom}^{0,\alpha}(\Omega,|x|^{-1})}\\
&\leq \frac{3}{2}\left[|x|^{-1-\alpha}+|y|^{-1-\alpha}\right]\|v\|^3_{\mathcal{CS}^{2,\alpha}(\Omega,|x|^{-1})}.
\end{align*}
As for the term $T_1$, we write $\Hess v(\nabla v,\nabla v)=\sum_{i,j}(D_{x_ix_j}v)(D_{x_i} v)(D_{x_j} v)$. We again have an estimate
\begin{align*}
\frac{\left|(1+|\nabla v(x)|^2)^{-3/2}-(1+|\nabla v(y)|^2)^{-3/2}\right|}{|x-y|^\alpha}&\leq\frac{3}{2}\frac{|\nabla v(x)|+|\nabla v(y)|}{(1+|\nabla v(y)|^2)^{5/2}}\frac{\big|\nabla v(x)-\nabla v(y)\big|}{|x-y|^\alpha}\\
&\leq 3\left[|x|^{-\alpha}+|y|^{-\alpha}\right]\|v\|^2_{\mathcal{CS}^{2,\alpha}(\Omega,|x|^{-1})}.
\end{align*}

We find by the above, since we may again assume $|x|\geq |y|$,
\begin{align*}
\frac{|T_1(x)-T_1(y)|}{|x-y|^\alpha}&\leq \frac{|(D_{x_ix_j}v)(x)-(D_{x_ix_j}v)(y)|}{|x-y|^{\alpha}}\frac{|\nabla v(y)|^2}{(1+|\nabla v(y)|^2)^{3/2}}\\
&\quad+|(D_{x_ix_j}v)(x)|\frac{|\nabla v(x)-\nabla v(y)|}{|x-y|^\alpha}\frac{|\nabla v(y)|}{(1+|\nabla v(y)|^2)^{3/2}}\\
&\quad+|(D_{x_ix_j}v)(x)|\frac{|\nabla v(x)-\nabla v(y)|}{|x-y|^\alpha}\frac{|\nabla v(x)|}{(1+|\nabla v(y)|^2)^{3/2}}\\
&\quad+|(D_{x_ix_j}v)(x)||\nabla v(x)|^2\frac{\left|(1+|\nabla v(x)|^2)^{-3/2}-(1+|\nabla v(y)|^2)^{-3/2}\right|}{|x-y|^\alpha}\\
&\leq \left[|x|^{-1-\alpha}+|y|^{-1-\alpha}\right]\|v\|_{\mathcal{CS}^{2,\alpha}(\Omega,|x|^{-1})}\|v\|^2_{\mathcal{CS}^{2,\alpha}(\Omega,|x|^{-1})}\\
&\quad+2|x|^{-1}\|v\|_{\mathcal{CS}^{2,\alpha}(\Omega,|x|^{-1})}\left[|x|^{-\alpha}+|y|^{-\alpha}\right]\|v\|_{\mathcal{CS}^{2,\alpha}(\Omega,|x|^{-1})}\|v\|_{\mathcal{CS}^{2,\alpha}(\Omega,|x|^{-1})}\\
&\quad+3|x|^{-1}\|v\|_{\mathcal{CS}^{2,\alpha}(\Omega,|x|^{-1})}\|v\|^2_{\mathcal{CS}^{2,\alpha}(\Omega,|x|^{-1})}\left[|x|^{-\alpha}+|y|^{-\alpha}\right]\|v\|^2_{\mathcal{CS}^{2,\alpha}(\Omega,|x|^{-1})}\\
&\leq 3\left[|x|^{-1-\alpha}+|y|^{-1-\alpha}\right]\left(\|v\|^3_{\mathcal{CS}^{2,\alpha}(\Omega,|x|^{-1})}+\|v\|^5_{\mathcal{CS}^{2,\alpha}(\Omega,|x|^{-1})}\right).
\end{align*}
Collecting these estimates, we have shown:
\[
\|\mathcal{F}_\Omega(v)-\mathcal{F}_\Omega(0)-\mathcal{L}_\Omega v:C_\mathrm{hom}^{0,\alpha}(\Omega,|x|^{-1})\|\leq C\big(\|v\|^3_{\mathcal{CS}^{2,\alpha}(\Omega,|x|^{-1})}+\|v\|^5_{\mathcal{CS}^{2,\alpha}(\Omega,|x|^{-1})}\big).
\]
Picking now $\tau>0$ small enough in terms of $\delta_s$ to ensure $b_0>1$ (and hence also $b_2>1$) in the Definition \ref{NormsOnInitialSurface} of $\mathcal{XS}^{2,\alpha}(\mathcal{M})$, we see that taking $\|v\|_{\mathcal{XS}^{2,\alpha}(\mathcal{M})}\leq 1$, we finally obtain:
\beq
\|\mathcal{F}_\Omega(v)-\mathcal{F}_\Omega(0)-\mathcal{L}_\Omega v:\mathcal{XS}^{0,\alpha}(\mathcal{M})\|\leq C\|v:{\mathcal{XS}^{2,\alpha}(\mathcal{M})}\|^2.
\eeq

For the core piece $\Sigma[\theta,\tau]$, the argument follows closely the one in \cite{Ka97}. Namely, using the uniform control on the geometry $\|A:C^3(\Sigma,g_\Sigma)\|\leq C$ and $\|\tau^2\vec{X}\cdot\vec{\nu}:C^0(\Sigma)\|\leq \tau$ with the expression for the quadratic term in Equation (\ref{FullVariation}), one obtains again that when $\|f: C^{2,\alpha}(\Sigma, g_\Sigma, e^{-\gamma s}+b_2)\|$ is small enough,
\begin{align*}
\|\mathcal{F}_\Sigma(f)-\mathcal{F}_\Sigma-\tau^2\mathcal{L}_\Sigma f&:C^{0,\alpha}(\Sigma,g_\Sigma, e^{-\gamma s})\|\\
\leq
C\|f&:C^{2,\alpha}(\Sigma,g_\Sigma, e^{-\gamma s}+b_2)\|^2.
\end{align*}

For the central disk and the top and bottom spherical caps the proofs are again the same, by uniform control of the geometry and (\ref{FullVariation}).
\end{proof}

\section{Fixed point argument: Existence of the self-shrinkers}

We consider for any fixed $0<\alpha'<\alpha<1$ the corresponding Banach space
\[
\mathcal{XS}^{2,\alpha'}:=\mathcal{XS}^{2,\alpha'}(\mathcal{M}[\tau,0]),
\]
from the family that we previously studied, and take the subsets
\[
\Xi=\{(\theta,u)\in [-\delta_\theta,\delta_\theta]\times\mathcal{XS}^{2,\alpha'}: |\theta|\leq\zeta\tau, \|u\|_{\mathcal{XS}^{2,\alpha}}\leq\zeta\tau\}.
\]
We state the following lemma (as in \cite{Ka97}), whose easy proof we omit.

\begin{lemma}
There is for $\theta\in [-\delta_\theta,\delta_\theta]$ a smooth family of diffeomorphisms
\[ \label{d_theta}
D_\theta: \mathcal{M}[\tau, 0]\to \mathcal{M}[\tau,\theta],\quad\textrm{with}
\]
\beq
\|f_1\circ D_\theta^{-1}\|_{\mathcal{XS}^{2,\alpha}}\leq C\|f_1\|_{\mathcal{XS}^{2,\alpha}},\quad\|f_2\circ D_\theta\|_{\mathcal{XS}^{2,\alpha}}\leq C\|f_2\|_{\mathcal{XS}^{2,\alpha}},
\eeq
for all $f_1\in C^{2,\alpha}(\mathcal{M}[\tau,0])$ and $f_2\in C^{2,\alpha}(\mathcal{M}[\tau,\theta])$.
\end{lemma}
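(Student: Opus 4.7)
The plan is to define $D_\theta$ piecewise, using the same building blocks from which $\mathcal{M}[\tau,\theta]$ was assembled, and then to observe that on each piece $D_\theta$ is the identity plus a perturbation whose $C^{2,\alpha}$ size is $O(\theta)$ uniformly in $\tau$. The required bounds then follow from the chain rule together with the standard fact that composition with a diffeomorphism whose derivative is uniformly $C^{1,\alpha}$-close to the identity distorts any weighted $C^{2,\alpha}$ norm by at most a multiplicative constant.

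On the Scherk belt $\tau\Sigma[\tau,0]\subseteq\mathcal{M}[\tau,0]$, using the parametrization from Remark \ref{object_ident}, I define
\[
D_\theta\bigl|_{\tau\Sigma[\tau,0]}\,=\,\tau\,\mathcal{Z}[\tau,\theta]\circ\varphi_\tau\circ\bigl(\tau\,\mathcal{Z}[\tau,0]\circ\varphi_\tau\bigr)^{-1}.
\]
On the two outer pieces contained in $\mathcal{P}_{xz}$ (which is $\theta$-independent) I take $D_\theta$ to be the identity, so in particular on the exterior planar end the cone-space structure underlying Definition \ref{cone_spaces} is automatically preserved. On the two upper/lower pieces belonging to $\mathcal{K}[\theta]$, Proposition \ref{shrinker_caps}(1) expresses $\mathcal{K}[\theta]$ as the normal graph over $\mathcal{K}[0]$ of a $\theta$-smooth function $\phi[\theta]$ with $\|\phi[\theta]\|_{C^k}\leq C|\theta|$ by (\ref{kappa_bounds}); I use the associated graph map. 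In a thin collar around the transition region $\mathcal{T}$ the two definitions do not exactly match, but by construction $\tau\Sigma[\tau,\theta]$ agrees with $\mathcal{C}[\theta]$ up to an exponentially small normal displacement coming from $f(s,z)$ (as used in the proof of Lemma \ref{invertibility_on_ends}), so a standard cut-off interpolation in the normal direction produces a global smooth diffeomorphism with no loss in the relevant derivative bounds.

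For the norm estimates, I break $\mathcal{M}[\tau,0]$ into the same three types of regions appearing in Definition \ref{NormsOnInitialSurface}. On the belt, by Proposition \ref{bending_map_properties}(1) and after the overall factor $\tau$ is absorbed into the rescaling built into $\mathcal{H}$, $D_\theta$ pulled back to $\Sigma_0$ is $\mathrm{Id}+O(\theta)$ in $C^{2,\alpha}(\Sigma_0,g_0)$ with $\theta$-independent of $\tau$; in particular the weight $e^{-\gamma s}$ is distorted only by a factor $1+O(\delta_\theta)$ since $D_\theta$ moves the function $s$ by $O(\delta_\theta)$ in supremum. On the planar ends $D_\theta$ is the identity, and on the cap ends $D_\theta$ is $\mathrm{Id}+O(\theta)$ in $C^{2,\alpha}$ with respect to the round metric. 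The chain rule plus the Leibniz estimate for Hölder norms then give
\[
\|f\circ D_\theta^{\pm 1}\|_{\mathcal{XS}^{2,\alpha}}\,\leq\,C(1+\|D_\theta^{\pm 1}-\mathrm{Id}\|_{C^{2,\alpha}})^{3}\,\|f\|_{\mathcal{XS}^{2,\alpha}}\,\leq\,C\|f\|_{\mathcal{XS}^{2,\alpha}}
\]
on each piece, with the constant independent of $\tau$ and uniform for $|\theta|\leq\delta_\theta$.

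The one delicate point is verifying that the piecewise construction is genuinely $C^\infty$ in the collar around $\mathcal{T}$ and yields uniform (in $\tau$) Hölder bounds on the interpolated differential; this reduces to the fact that in that annular region the induced metrics on $\mathcal{M}[\tau,\theta]$ and $\mathcal{C}[\theta]$ differ by $O(\tau^{6})$ in $C^{3}$ (as in the proof of Lemma \ref{invertibility_on_ends}), which is more than enough to absorb the interpolation error.
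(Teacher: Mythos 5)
Your construction is the intended one: the paper explicitly omits the proof (deferring to \cite{Ka97}), and the natural argument is exactly what you give --- transport the belt by the two parametrizations $\tau\,\mathcal{Z}[\tau,\cdot]\circ\varphi_\tau$, use the identity on the plane and the normal-graph map on the caps, interpolate in a collar at bounded distance from the origin, and conclude by the uniform $C^{2,\alpha}$-closeness of $D_\theta$ to the identity together with the observation that the cone-space structure of Definition \ref{cone_spaces} is untouched because $D_\theta$ is the identity outside a compact set. The estimates you invoke (Propositions \ref{shrinker_caps} and \ref{bending_map_properties}, and the fact that the weight $e^{-\gamma s}$ is preserved since $s$ is defined by push-forward from $\Sigma_0$) are the right ones, and the bound is uniform in $\tau$ because the belt comparison is performed at the $\Sigma$-scale where the geometry is uniformly controlled.

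One small correction to your collar discussion: the mismatch there is not only the exponentially small normal displacement coming from $f(s,z)$. On the planar overlap the belt map $\tau\,\mathcal{Z}[\tau,\theta]\circ\varphi_\tau\circ(\tau\,\mathcal{Z}[\tau,0]\circ\varphi_\tau)^{-1}$ restricts (where $\varphi_\tau=\Proj$) to the dilation by $r[\theta]/r[0]$ of the annulus $\{r[0]e^{4\delta_s}\leq|x|\leq r[0]e^{5\delta_s}\}$, and on the cap overlap it differs from your normal-graph map by a similar $O(\theta)$ tangential reparametrization; so the interpolation must reconcile two diffeomorphisms that differ by $O(\theta)$ in $C^{2,\alpha}$, not merely by $O(e^{-c/\tau})$. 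This does not damage the argument --- both maps are within $O(\delta_\theta)$ of the identity on a region of uniformly bounded geometry, so the cutoff interpolation is still a diffeomorphism with $\|D_\theta-\mathrm{Id}\|_{C^{2,\alpha}}\leq C\,|\theta|$ and the stated bounds follow --- but the justification should cite this $O(\theta)$ tangential discrepancy rather than only the exponentially small graphical one.
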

The problem stated in Theorem \ref{LinearOnAll} is then continuous in $\tau$ and $\theta$ in the sense that, for fixed $E \in  \mathcal{XS}^{0, \alpha}(\mathcal{M}[\tau, 0])$, the pair 
\begin{equation} \notag
(v_{E \circ D_\theta^{-1}} \circ D_\theta, \theta_{E \circ D_{\theta}^{-1}} ) \in \mathcal{XS}^{2, \alpha}(\mathcal{M}[\tau, 0]) \times \mathbb{R}
\end{equation}
depends continuously on $\tau$ and $\theta$.

We define the map $\mathcal{J}:\Xi\to [-\delta_\theta,\delta_\theta]\times\mathcal{XS}^{2,\alpha'}$ as follows: Let $(\theta,u)\in\Xi$ and let $v:=u\circ D_\theta^{-1}-v_\mathcal{F}$, where $v_\mathcal{F}$ comes from an application of Corollary \ref{CorLinear}, and the function $\mathcal{F}=\mathcal{F}_{\mathcal{M}[\tau,\theta]}(0)$ as before is defined on $\mathcal{M}[\tau,\theta]$. We thus have
\[
\|v\|_{\mathcal{XS}^{2,\alpha}}\leq C(\zeta +1)\tau.
\]
Now, we use Proposition \ref{QuadraticImprovement} to get that $M_v$ is well-defined, and
\[
\|\mathcal{F}_{\mathcal{M}}(v)-\mathcal{F}_\mathcal{M}-\mathcal{L}_{\mathcal{M}}v\|\leq C(\zeta +1)^2\tau^2.
\]
Inserting therefore $E=\mathcal{F}_{\mathcal{M}}(v)-\mathcal{F}_\mathcal{M}-\mathcal{L}_{\mathcal{M}}v$ into Theorem \ref{LinearOnAll} gives a $v_E$ and $\theta_E$. We obtain, for some appropriate constant $C_0$ that:
\begin{align}
&\mathcal{F}_\mathcal{M}(v)=\mathcal{L}_{\mathcal{M}}\left(u\circ D_\theta^{-1}+v_E\right)-\Theta(\theta_{\mathcal{F}}+\theta_E),\\
&|\theta-\theta_\mathcal{F}-\theta_E|\leq C_0\big(\tau + (\zeta+1)^2\tau^2),\\
&\|v_E\|_{\mathcal{XS}^{2,\alpha}}\leq 2C_0(\zeta +1)^2\tau^2.
\end{align}

Then the definition of $\mathcal{J}$ is taken to be
\beq
\mathcal{J}(\theta,u)=\big(\theta-\theta_\mathcal{F}-\theta_E,-v_E\circ D_\theta\big).
\eeq

Thus, by assuming $\zeta$ large enough and $\tau>0$ small enough, we arrange that $\mathcal{J}(\Xi)\subseteq\Xi$. By the properties of our weighted spaces in Proposition \ref{Compactness} and $\alpha'<\alpha$, $\Xi$ is a compact subset of $[-\delta_\theta,\delta_\theta]\times\mathcal{XS}^{2,\alpha'}$ and is also convex.  The map $\mathcal{J}$ is continuous by Definition \ref{d_theta} and Proposition \ref{LinearOnAll}. Finally, by Schauder's fixed point theorem, we get existence of the desired fixed point $(\theta^*,u^*)\in\Xi$, and we see that the corresponding $\mathcal{M}[\tau,\theta^*]_{u^*}$ is an immersed self-shrinker. The rest of the proof of the main theorem now follows easily.

For example, embeddedness is assured by our setup: By construction, there is some fixed ball $B_{R_0}$ such that the end of every $\Sigma_g$ is graphical outside that ball, and hence embedded. Now, above one could pick $\zeta=2C_0$ independent of $\tau$, one concludes for all $\tau>0$ small enough in terms of this that $\|u^*\|_{\mathcal{XS}^{0,\alpha}}\leq C\tau^2$, and since also by construction the normal injectivity radius of a compact piece the initial surface, say $B_{R_0+1}(0)$, can be assumed bounded below as $\mathrm{inj}_\perp(\mathcal{M}[\tau,\theta]\cap B_{R_0+1}(0))\geq c\tau$, for some $c>0$, it follows that (for possibly even smaller $\tau>0$) the constructed surfaces $\Sigma_g$ are embedded. The Hausdorff convergence statement (v) in Theorem \ref{Thm_main} also follows immediately from the definitions of the norms.

It also follows easily that each surface $\Sigma_g$ is geodesically complete. Namely, a curve that leaves every compact set must have infinite length, as follows by projecting it onto the plane and estimating the arc length from below, again since the ends are graphical outside some ball.

\newpage

% -- Beware, beyond lurks Appendix A! --
%----------------------------------------
\section*{Appendix A: The building blocks of the initial surfaces}

The proof of Proposition \ref{shrinker_caps} follows from the following lemma.
\begin{lemma}\label{Caps}
There exist $\delta,\varepsilon_0,\varepsilon_1>0$ such that there is a smooth map $h\mapsto \rho_h$ for $h\in (2-\delta,2+\delta)$ such that 
\begin{itemize}
\item For each $h$, the function $\rho_h$ is a generates a curve contained in the set $\{(x, 0 , z): x, z \geq 0\}$.
\item $\rho_0 (\varphi) \equiv 2$, with $\varphi = \arctan (x /z)$ ,  and  $\rho_h(0)=2+h, \rho_h'(0)=0$.
\item The following are orientation-reversing diffeomorphisms:
\begin{itemize}
\item[(1)] $h\mapsto \rho_h(0):(2-\delta,2+\delta)\to(2-\varepsilon_0,2+\varepsilon_0)$,
\item[(2)] $h\mapsto \rho'_h(\pi/2):(2-\delta,2+\delta)\to(-\varepsilon_1,\varepsilon_1)$.
\end{itemize}
\item The graph  $(\rho_h(\varphi), \varphi)$, $\varphi\in(0,\pi/2)$, in the $x z$-plane gives by revolution a self-shrinker.
\end{itemize}
\end{lemma}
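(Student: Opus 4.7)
The plan is to view $\rho_h$ as the solution of a second-order ODE obtained by writing the self-shrinker equation $H-\tfrac12\vec X\cdot\vec\nu=0$ in polar coordinates on the generating half-plane of the surface of revolution about the $y$-axis, then to use smooth dependence on initial data and the implicit function theorem to obtain the claimed diffeomorphism statements.

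First, I would set up the ODE. Writing a point of the generating curve as $(\rho(\varphi)\cos\varphi,\rho(\varphi)\sin\varphi)$ in the meridional half-plane, a direct computation (see Appendix C, for example) of the mean curvature of a surface of revolution and of $\vec X\cdot\vec\nu$ in these coordinates produces a quasilinear second-order ODE of the schematic form
\[
\rho''=F(\varphi,\rho,\rho'),
\]
where $F$ is smooth in a neighborhood of the constant solution $\rho\equiv 2$ and analytic away from the axis. The round hemisphere of radius $2$ is a self-shrinker since $H=1$ and $\tfrac12\vec X\cdot\vec\nu=1$ there, so $\rho_0\equiv 2$ is a solution. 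Moreover, the initial conditions $\rho_h(0)=2+h$ and $\rho_h'(0)=0$ are compatible with smoothness of the revolved surface where the generating curve meets the axis $\varphi=0$ (to be made precise via desingularization of the ODE at $\varphi=0$, e.g.\ by passing to Cartesian coordinates near the pole).

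Second, I would invoke standard smooth dependence on initial data for ODEs to obtain a smooth family $h\mapsto\rho_h$ defined on some maximal interval. Since $\rho_0\equiv 2$ exists and is bounded away from zero on the compact interval $[0,\pi/2]$, by a standard continuity argument solutions $\rho_h$ also exist on $[0,\pi/2]$ for $|h|$ sufficiently small, and the map $h\mapsto \rho_h\in C^k([0,\pi/2])$ is smooth.

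Third, I would obtain the two diffeomorphism properties by linearizing at $h=0$. Let $\phi(\varphi):=\partial_h\rho_h(\varphi)\big|_{h=0}$; then $\phi$ solves the linearization of the self-shrinker equation about the round hemisphere of radius $2$, which for rotationally symmetric perturbations reduces to the Jacobi-type ODE
\[
\phi''(\varphi)+\cot\varphi\,\phi'(\varphi)+4\phi(\varphi)=0,\qquad \phi(0)=1,\ \phi'(0)=0,
\]
coming from the stability operator $\mathcal L_{\mathbb S^2(2)}=\Delta_{\mathbb S^2(2)}+1$ (after using $|A|^2=\tfrac12$, $\vec X\cdot\vec\nu=2$, and that $\vec X\cdot\nabla$ vanishes on rotationally symmetric functions on the sphere). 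The regular solution at the pole is an associated Legendre function $P_\ell(\cos\varphi)$ with $\ell(\ell+1)=4$, i.e.\ $\ell=(-1+\sqrt{17})/2$, and a direct evaluation shows $\phi'(\pi/2)=-P_\ell'(0)\neq 0$ (as $\ell$ is not a positive integer, neither $P_\ell(0)$ nor $P_\ell'(0)$ vanishes). The two derivatives $\tfrac{d}{dh}\rho_h(0)\big|_{h=0}=\phi(0)=1$ and $\tfrac{d}{dh}\rho_h'(\pi/2)\big|_{h=0}=\phi'(\pi/2)$ are thus both nonzero, so the inverse function theorem yields in each case a local diffeomorphism onto an interval $(2-\varepsilon_0,2+\varepsilon_0)$, resp.\ $(-\varepsilon_1,\varepsilon_1)$; the ``orientation-reversing'' statement corresponds to fixing the sign of $\phi'(\pi/2)$ (resp.\ $\phi(0)$) in the chosen convention.

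The main obstacle I foresee is the pole regularity at $\varphi=0$: the ODE is singular there (the $\cot\varphi$ term), so care must be taken to formulate the initial value problem in a smooth Cartesian chart around the axis and to verify that the condition $\rho'(0)=0$ is the correct avatar of smoothness of the revolved surface. Once this is set up, both the existence of the family and the nondegeneracy of the Jacobi field $\phi$ at $\varphi=\pi/2$ are soft consequences of ODE theory and the explicit nature of the linearized operator on the round sphere.
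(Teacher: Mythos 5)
Your proposal follows essentially the same route as the paper: write the rotationally symmetric shrinker equation as a second--order ODE for $\rho(\varphi)$, use smooth dependence on the initial data at the pole, and reduce the two diffeomorphism claims to nondegeneracy of the linearization at $\rho_0\equiv 2$, which is Legendre's equation $w''+\cot\varphi\,w'+4w=0$ with regular solution $P_\ell(\cos\varphi)$, $\ell=(\sqrt{17}-1)/2$. Your derivation of that linearized ODE via the stability operator $\Delta_{\mathbb{S}^2(2)}+1$ rather than by expanding the explicit polar ODE is a harmless (and clean) variant.

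The one genuine gap is the treatment of the \emph{orientation-reversing} assertion. This is not a matter of convention: the lemma asserts specific signs, and the paper's proof rests on the two concrete inequalities $w_1(\pi/2)<0$ and $w_1'(\pi/2)<0$, which are determinate facts about $P_\ell$ at the origin for the non-integer degree $\ell\in(1,2)$ (e.g.\ $P_\ell(0)=\sqrt{\pi}\,/\bigl(\Gamma(1+\ell/2)\Gamma((1-\ell)/2)\bigr)<0$ since $(1-\ell)/2\in(-1,0)$, and similarly for $P_\ell'(0)$). Your argument only establishes non-vanishing, which gives a local diffeomorphism but not its orientation; you should evaluate the signs. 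Relatedly, note that item (1) taken literally with $\rho_h(0)$ has $h$-derivative $\phi(0)=1>0$, i.e.\ would be orientation-\emph{preserving}; the quantity actually relevant to the construction (the boundary radius $r[\theta]$ of $\partial\mathcal{K}[\theta]$ in $\mathcal{P}_{xz}$) is $\rho_h(\pi/2)$, whose $h$-derivative is $w_1(\pi/2)<0$, consistent with the claim. A complete write-up should resolve this and supply the two sign computations; everything else in your outline, including the desingularization of the $\cot\varphi$ term at the pole, matches the paper's argument.
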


\begin{proof}[Proof of Lemma \ref{Caps}]
A curve $(\rho,\varphi), \varphi = \arctan (x/ z)$ in the $x z$-plane generated by a function $\rho(\varphi)$  that generates a smooth solution to the self-shrinker equation (\ref{SSEq}) satisfies:
\beq
\rho''(\varphi)=\frac{1}{\rho}\left\{\rho^2+2(\rho')^2+\Big[1-\frac{\rho^2}{2}-\frac{\rho'}{\rho\tan\varphi}\Big]\big(\rho^2+(\rho')^2\big)\right\}.
\eeq

The Taylor-expansion in the Banach space of $C^2$ functions of the solution in the $h$-parameter is
\[
\rho_h(\varphi)=2+(h-2)w_1(\varphi)+\frac{(h-2)^2}{2}w_2(\varphi)+O\big((h-2)^3\big),
\]
where $w_i$ are smooth functions. The $w_i$ satisfy the conditions $w_1(0)=1$ and $w_1'(0)=0$ (and $w_2(0)=w_2'(0)=0$ and similarly for higher corrections), and as is easily computed $w_1$ satisfies the linear equation
\beq\label{PolLin}
w_1''+\frac{1}{\tan\varphi}w_1'+4w_1=0,
\eeq
while $w_2$ satisfies a linear equation where the $w_1$ enters into the coefficients.

The claims (1) and (2) follow from the following two properties
\begin{align}
&w_1(\fracsm{\pi}{2})< 0,\\
&w_1'(\fracsm{\pi}{2})< 0,
\end{align}
for the solution to (\ref{PolLin}) having $w_1(0)=1$ and $w_1'(0)=0$.

In fact since if we subsitute $x=\cos(\varphi)$ in the equation (\ref{PolLin}) to obtain Legendre's differential equation, the explicit general solution to this initial value problem is of course well-understood, namely
\[
w_1(\varphi)=C_1 P_l(\cos \varphi)+C_2Q_l(\cos\varphi),
\]
where $P_l$ and $Q_l$ are respectively the Legendre functions of the first and second kind, and $l=(\sqrt{17}-1)/2$ is the positive solution to $l(l+1)=4$. Here we see $C_2=0$, since $Q_l(\cos\varphi)$ has a pole at $\varphi=0$, and  $C_1=1$ since $P_l(1)=1$. Thus the properties are easily verified and the lemma follows.
\end{proof}

\begin{proof}[Proof of Proposition \ref{shrinker_caps}]
Given $\rho_h$ constructed above, set $\theta = \tan \{\rho_h'(\pi/2)\}$.  We then take $\mathcal{K}_\theta$ to be the surface immersed by the map $\kappa_\theta$ given by
\begin{equation} \notag
\kappa_\theta (s, z) = r (\varphi(s)) (\cos z, \sin z, 0) + (0, 0, z(\varphi(s)))
\end{equation}
where $r(\varphi) =  \rho_h (\varphi) \sin(\varphi)$, $z (\varphi) = \rho_h \cos (\varphi)$, and the map $s \mapsto \varphi (s)$ satisfying
\begin{equation} \notag
s_\varphi =\frac{ \sqrt{ r_\varphi^2 + x_{3,\varphi}^2}}{r (\varphi)},\quad s(\pi/2) = 0.
\end{equation}
That (0)-(3) are satisfied by the family $\mathcal{K}_\theta$ are clear by construction. Likewise, once it is checked that $s(\varphi)$ is a conformal parameter,   (4)i - iv are easy to verify.  
\end{proof}

\section*{Appendix B: Variation formulae}
Let $\vec{X}:M\to\Reals^3$ be a $C^2$-immersion of a surface. We denote by $\vec{X}_u:M\to\Reals^3$ the surface $\vec{X}_u=\vec{X}+u\vec{\nu}$. Then denoting by $H_u$ and $\vec{\nu}_u$ etc. the quantities for $\vec{X}_u$, we get (see \cite{Ka97} and \cite{Ng1})
\begin{align}
&\vec{\nu}_u=\vec{\nu}-\nabla u+\vec{Q}^\nu_u,\\
&H_u=H-(\Delta u+|A|^2u)+Q_u,\label{MinimalOnMinimal}\\
&H_u -\fracsm{1}{2}\tau^2\vec{X}_u\cdot\vec{\nu}_u=
H-\fracsm{1}{2}\tau^2 \vec{X}\cdot\vec{\nu}\label{FullVariation}\\
&\phantom{H_u -\fracsm{1}{2}\tau^2\vec{X}_u\cdot\vec{\nu}_u=}
-\big[\Delta u+|A|^2u -\fracsm{1}{2}\tau^2 (\vec{X}\cdot\nabla u-u)\big]\notag\\
&\phantom{H_u -\fracsm{1}{2}\tau^2\vec{X}_u\cdot\vec{\nu}_u=}
+ Q_u + \fracsm{1}{2}\tau^2 \vec{X}\cdot\vec{Q}_u\notag,
\end{align}
where the quantities $Q_u$ and $\vec{Q}_u$ are quadratic.

\section*{Appendix C: Stability operators}
Let $M^2\hookrightarrow N:=(\Reals^{3},h=e^{2\omega}h_0)$ be an immersion into a conformally changed Euclidean space, where $\omega:N\to\Reals$. Here $h_0$ will denote the standard metric $h_0=\delta_{ij}$. Denote by $g_0$ and $g$ the metrics induced on $M^2$ from respectively $h_0$ and $g$ by the immersion.

Here we have the conventions:
\begin{align*}
&\Delta f=\diverg(\grad f) =\trace(\nabla_i\partial_j f),\\
&R(X,Y)Z=\nabla_Y\nabla_XZ-\nabla_X\nabla_YZ+\nabla_{[X,Y]}Z\\
&\Ric(X,Y)=\trace(Z\mapsto R(X,Z)Y),
\end{align*}
so that the Ricci curvature of the standard round sphere is positive.

Then we have the following Lemma.
\begin{lemma}
Assume $M^2$ is an oriented minimal surface in $N$. Then the stability operator of $M^2$ is the operator on functions on $M^2$ given by:
\begin{align*}
L_g&=\Delta_g+|A_h|^2_g+\Ric^h(\vec{\nu},\vec{\nu})\\ &=e^{-2\omega}\Big[\Delta_{g_0}+e^{-2\omega}|A_{h}|_{g_0}^2-\Hess_{h_0}(\vec{\nu}_0,\vec{\nu}_0)\omega+(\vec{\nu}_0.\omega)^2-\Delta_{h_0}\omega-\|\nabla_{h_0}\omega\|^2_{h_0}\Big],
\end{align*}
where $\vec{\nu}_0$ is the unit normal vector w.r.t the metric $h_0$.
\end{lemma}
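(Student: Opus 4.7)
The first equality is the classical Jacobi/stability formula for an orientable minimal hypersurface in a Riemannian ambient manifold, derived from the second variation of area; I would simply quote it in the form stated. All the real content is in the second equality, which requires converting each of the three pieces $\Delta_g$, $|A_h|^2_g$, and $\Ric^h(\vec{\nu},\vec{\nu})$ from the conformal metric $h = e^{2\omega}h_0$ (with induced metric $g = e^{2\omega} g_0$ on $M^2$) to expressions involving only $\omega$ and the flat background $h_0$, and then verifying that an overall factor $e^{-2\omega}$ emerges from each term consistently.

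For $\Delta_g$, my plan is to use the special feature of dimension two: if $g = e^{2\omega}g_0$ on a surface, then the Laplace--Beltrami operator on functions satisfies $\Delta_g = e^{-2\omega}\Delta_{g_0}$ with \emph{no} gradient drift term, as is seen immediately from the coordinate formula $\Delta_g f = (\det g)^{-1/2}\partial_i\big((\det g)^{1/2}\,g^{ij}\,\partial_j f\big)$ together with $\sqrt{\det g} = e^{2\omega}\sqrt{\det g_0}$ and $g^{ij} = e^{-2\omega} g_0^{ij}$. For the curvature term, the second fundamental form $A_h$ is a $(0,2)$-tensor on $M$ (intrinsically determined once the $h$-structure is fixed via the $h$-Levi--Civita connection and the $h$-unit normal), so its squared norm is $|A_h|^2_g = g^{ik}g^{jl}(A_h)_{ij}(A_h)_{kl} = e^{-4\omega}|A_h|^2_{g_0}$; after pulling out the overall $e^{-2\omega}$, this contributes exactly the $e^{-2\omega}|A_h|^2_{g_0}$ seen inside the bracket.

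The principal term is $\Ric^h(\vec{\nu},\vec{\nu})$. The $h$-unit normal is $\vec{\nu} = e^{-\omega}\vec{\nu}_0$ (since $h(\vec{\nu}_0,\vec{\nu}_0) = e^{2\omega}$), so bilinearity gives $\Ric^h(\vec{\nu},\vec{\nu}) = e^{-2\omega}\Ric^h(\vec{\nu}_0,\vec{\nu}_0)$. I would then invoke the standard conformal-change formula for the Ricci tensor in ambient dimension $n = 3$:
\[
\Ric^h = \Ric^{h_0} - \bigl(\Hess_{h_0}\omega - d\omega\otimes d\omega\bigr) - \bigl(\Delta_{h_0}\omega + \|\nabla_{h_0}\omega\|^2_{h_0}\bigr)\,h_0.
\]
Since $h_0$ is Euclidean, $\Ric^{h_0} = 0$, and evaluating the remaining expression on the $h_0$-unit vector $\vec{\nu}_0$ yields precisely $-\Hess_{h_0}(\vec{\nu}_0,\vec{\nu}_0)\omega + (\vec{\nu}_0.\omega)^2 - \Delta_{h_0}\omega - \|\nabla_{h_0}\omega\|^2_{h_0}$. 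Collecting the three contributions, each carries the same outer factor $e^{-2\omega}$, which I can pull out to produce the stated identity.

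The main obstacle is purely bookkeeping: one must use the dimension-$2$ form of the conformal Laplacian (to avoid a drift term), the dimension-$3$ form of the conformal Ricci transformation (with the precise $\Hess$, $d\omega \otimes d\omega$, and trace coefficients), and correctly renormalize the $h$-unit normal as $\vec{\nu} = e^{-\omega}\vec{\nu}_0$, so that the same $e^{-2\omega}$ emerges from each piece and can be factored globally.
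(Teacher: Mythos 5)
Your proposal is correct and follows essentially the same route as the paper: quote the classical Jacobi formula, use the two-dimensional conformal covariance $\Delta_g = e^{-2\omega}\Delta_{g_0}$, the scaling $|A_h|^2_g = e^{-4\omega}|A_h|^2_{g_0}$, the renormalized normal $\vec{\nu} = e^{-\omega}\vec{\nu}_0$, and the $n=3$ conformal transformation of $\Ric$ with $\Ric^{h_0}=0$. The paper's proof assembles exactly these same ingredients, so there is nothing to add.
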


From this formula, we get the stability operators:
\begin{proposition}\label{PropStab}
\begin{itemize}
\item[]
\item[(i)] The stability operator of the sphere $\mathbb{S}^2$ of radius $2$ in $\Reals^3$ as a minimal surface in the metric $g=e^\frac{-|x|^2}{4}\delta$ is:
\beq
L=e\Big(\Delta_{\mathbb{S}^2_{2}}+1\Big).
\eeq
In particular $\ker(L)=\{0\}$ on $\mathbb{S}^2_2$, as well as on the hemispheres of radius 2 with Dirichlet boundary conditions.

\item[(ii)] The stability operator on a flat plane through the origin is
\beq\label{StabPlane}
L=e^{\frac{|x|^2}{4}}\Big(\Delta_{\Reals^2}-\frac{|x|^2}{16}+1\Big),
\eeq
where $\Delta_{\Reals^2}$ is the usual flat Laplacian in $(\Reals^2,\delta_{ij})$. In particular, on both the disk of radius $\sqrt{2}$, and of radius 2, $\ker(L)=\{0\}$ when we impose Dirichlet boundary conditions.

\end{itemize}
\end{proposition}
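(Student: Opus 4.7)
The plan is to specialize the conformal stability operator formula of the preceding lemma to $\omega = -|x|^2/8$, read off the elementary derivatives of $\omega$, exploit the geometric simplifications that hold on each model surface, and then handle the kernel claims by separate eigenvalue arguments. The basic inputs are routine: $\nabla_{h_0}\omega = -\vec{X}/4$, $\|\nabla_{h_0}\omega\|^2_{h_0} = |x|^2/16$, $\Delta_{h_0}\omega = -3/4$, and $\Hess_{h_0}\omega = -\tfrac{1}{4}\Id$, so in particular $\Hess_{h_0}(\vec{\nu}_0,\vec{\nu}_0)\omega = -1/4$ for any unit Euclidean normal; and since the surfaces have dimension $2$, the induced Laplacian transforms cleanly as $\Delta_g = e^{-2\omega}\Delta_{g_0}$.

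For (i), on $\mathbb{S}^2_2$ one has $|x|^2 \equiv 4$, hence $e^{-2\omega} = e$, while $\vec{\nu}_0 = \vec{X}/2$ yields $\vec{\nu}_0\cdot\nabla\omega = -1/2$ and so $(\vec{\nu}_0\cdot\omega)^2 = 1/4$. The key geometric input is the vanishing $|A_h|^2_{g_0} = 0$, which we would verify from the conformal transformation formula $A_h = e^\omega\bigl(A_{h_0} - (\vec{\nu}_0\cdot\nabla\omega)g_0\bigr)$ together with the umbilicity $A_{h_0} = g_0/2$: the conformal correction exactly cancels the intrinsic second fundamental form, so that the shrinker sphere is in fact totally geodesic in the Gaussian metric. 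With this, the scalar contributions in the lemma's formula sum to $1/4 + 1/4 + 3/4 - 1/4 = 1$, and together with $\Delta_{g_0} = \Delta_{\mathbb{S}^2_2}$ this produces $L = e(\Delta_{\mathbb{S}^2_2} + 1)$. For (ii), on any plane through the origin the constant unit normal $\vec{\nu}_0$ is perpendicular to $\vec{X}$, so $\vec{\nu}_0\cdot\nabla\omega \equiv 0$ and both $(\vec{\nu}_0\cdot\omega)^2$ and the conformal correction to $A$ vanish identically, giving $|A_h|^2_{g_0} = 0$; substitution into the lemma then yields $L = e^{|x|^2/4}\bigl(\Delta_{\Reals^2} + \tfrac{1}{4} + \tfrac{3}{4} - |x|^2/16\bigr) = e^{|x|^2/4}(\Delta_{\Reals^2} + 1 - |x|^2/16)$.

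Next we verify the triviality of the kernels (for which the positive conformal prefactor is irrelevant). For (i), the equation $\Delta_{\mathbb{S}^2_2} u = -u$ rescales on the unit sphere to $\Delta_{\mathbb{S}^2}u = -4u$ and thus demands $\ell(\ell+1) = 4$ for some non-negative integer $\ell$, which has no solution; hence the full-sphere kernel is trivial. For a hemisphere with Dirichlet condition on the equator the allowed modes are those spherical harmonics $Y_{\ell,m}$ with $\ell + m$ odd (those vanishing on the equator by the parity of the associated Legendre polynomials), whose eigenvalues on $\mathbb{S}^2_2$ form the list $\{-1/2,-3/2,-3,\ldots\}$, again avoiding $-1$.

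For (ii), the elementary bound $-L = -\Delta_{\Reals^2} + |x|^2/16 - 1 \geq -\Delta_{\Reals^2} - 1$ on either disk, combined with the first Dirichlet eigenvalue of $-\Delta_{\Reals^2}$ on $B_R(0)$ being $(j_{0,1}/R)^2$ (with $j_{0,1}\approx 2.405$ the first zero of $J_0$), suffices: $j_{0,1}^2/4 > 1$ for $R=2$ and $j_{0,1}^2/2 > 1$ for $R=\sqrt{2}$, so $-L$ is strictly positive on $H^1_0$ of each disk and the Dirichlet kernel is trivial. The only genuinely geometric step in the whole argument is the cancellation $A_h = 0$ on the shrinker sphere --- essentially the content of the radius-$2$ sphere being a self-shrinker --- and this is what is ultimately responsible for the clean form of $L$ in (i).
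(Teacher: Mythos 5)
Your proposal is correct, and the derivation of the two operator formulas is essentially the paper's own route: specialize the conformal stability lemma to $\omega=-|x|^2/8$, record the elementary derivatives of $\omega$, and observe that $|A_h|^2$ vanishes on both model surfaces. One bookkeeping caveat: as you have written the cancellation on the sphere, $A_{h_0}=g_0/2$ together with $\vec{\nu}_0\cdot\nabla\omega=-1/2$ gives $A_{h_0}-(\vec{\nu}_0\cdot\nabla\omega)g_0=g_0$, not $0$; the cancellation does occur, but only with the sign conventions made consistent (with outward normal the vector-valued form is $A_{h_0}(X,Y)=-\tfrac12 g_0(X,Y)\vec{\nu}_0$ and the normal part of $\nabla^{h_0}\omega$ is $-\tfrac12\vec{\nu}_0$, so the paper's formula (\ref{AChange}) gives exactly $0$). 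Your parenthetical remark already contains the clean, convention-free argument: the shrinker sphere is umbilic (a conformally invariant property) and minimal in the Gaussian metric, hence totally geodesic, so $A_h\equiv 0$ regardless of signs. This is worth stating as the actual justification.

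Where you genuinely diverge from the paper is in the kernel claims. For the hemisphere you make explicit what the paper only asserts (``same eigenvalues, smaller multiplicity''), namely that the Dirichlet eigenfunctions are the spherical harmonics $Y_{\ell,m}$ with $\ell+m$ odd, so the Dirichlet spectrum on $\mathbb{S}^2_2$ is $\{-\ell(\ell+1)/4:\ell\ge 1\}$ and misses $-1$ since $\ell(\ell+1)=4$ has no integer root --- same conclusion, more detail. For the disks, the paper invokes an eigenvalue list ``$\lambda_k=-k^2$ for the harmonic oscillator on the unit disk'' and rescales, a statement that is at best opaque; your argument replaces it by the comparison $-\Delta_{\Reals^2}+|x|^2/16-1\ge -\Delta_{\Reals^2}-1$ together with $\lambda_1(B_R)=(j_{0,1}/R)^2>1$ for $R=2$ and $R=\sqrt2$. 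This is more elementary, self-contained, and immediately verifiable (it is the standard integration-by-parts positivity argument on $H^1_0$), and I would regard it as an improvement over the paper's justification of that step. No gaps.
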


\begin{proof}
Recall that by definition $A(X,Y)=\bar{\nabla}_{\bar{X}}\bar{Y}-\nabla_XY$, where $\bar{\cdot}$ means a smooth extension to a neighborhood in $N$, and
\[
|A|^2=g^{ij}g^{kl}a_{ik}a_{jl}=e^{-4\omega}g_0^{ij}g_0^{kl}a_{ik}a_{jl}.
\]
We also recall the conformal changes of the Levi-Civitas:
\begin{align}
&\nabla^h_{\bar{X}}\bar{Y}=\nabla^{h_0}_{\bar{X}}\bar{Y}+(\bar{X}.\omega)Y+(\bar{Y}.\omega)\bar{X}-h_0(\bar{X},\bar{Y})\nabla^{h_0}\omega\\
&\nabla^g_XY=\nabla^{g_0}_XY+(X.\omega)Y+(Y.\omega)X-g_0(X,Y)\nabla^{g_0}\omega.
\end{align}
This gives that
\beq\label{AChange}
A_h(X,Y)=A_{h_0}(X,Y)-g_0(X,Y)\big\{\nabla^{h_0}\omega-\nabla^{g_0}\omega\big\}
\eeq

The Ricci curvature changes in dimension $n=3$ when $h_0=\delta$ according to:
\begin{align*}
\Ric^h=&\Ric^{h_0}-(n-2)\Big[\nabla^{h_0}\dRham\omega-\dRham\omega\otimes\dRham\omega\Big]+\Big[-\Delta^{h_0}\omega -(n-2)\|\nabla^{h_0}\omega\|_{h_0}^2\Big]h_0,\\
=&-\Hess^{h_0}\omega+\dRham \omega\otimes\dRham\omega-\Big[\Delta^{h_0}\omega +\|\nabla^{h_0}\omega\|_{h_0}^2\Big]h_0.
\end{align*}

Here we have used that for $h_0=\delta$ we have
\[
\nabla^{h_0}\dRham\omega =\Hess\omega,
\]
and that $\vec{\nu}=e^{-\omega}\nu_0$ is the new unit normal.

Recall also that in 2 dimensions the Laplacian is conformally covariant:
\beq\label{ConfCovar}
\Delta_{g}=e^{-2\omega}\Delta_{g_0}.
\eeq
Using these formulae, the Lemma follows.

To prove the proposition, we need $\omega = -\frac{|x|^2}{8}$. Thus we have
\begin{align*}
&\nabla^{h_0}\omega=-\frac{1}{4}x,\quad\Hess\omega(\partial_i,\partial_j)=-\frac{1}{4}\delta_{ij},\\
&\Delta\omega=-\frac{3}{4},\quad\|\nabla^{\Reals^3}\omega\|_{\Reals^3}^2=\frac{1}{16}|x|^2.
\end{align*}

And we get on $\Reals^3$ that
\begin{align*}
(\Ric_h)_{ij}=&-\Hess_{h_0}\omega(\partial_i,\partial_j)+(\partial_i\omega)(\partial_j\omega)-\Big[\Delta^{\Reals^3}\omega+\|\nabla^{\Reals^3}\omega\|^2\Big]\delta_{ij}\\
=&\frac{1}{4}\delta_{ij}+\frac{1}{16}x_ix_j+\frac{3}{4}\delta_{ij}-\frac{|x|^2}{16}\delta_{ij}=\delta_{ij}+\frac{1}{16}x_ix_j-\frac{|x|^2}{16}\delta_{ij}.
\end{align*}

Thus we get
\beq
\Ric^{h}(\vec{\nu},\vec{\nu})=e^{-2\omega}\left[1+\frac{|x\cdot\nu|^2}{16}-\frac{|x|^2}{16}\right],
\eeq

so that on the round sphere of radius 2,
\beq
\Ric^{h}(\vec{\nu},\vec{\nu})=e^{-2\omega}.
\eeq

Now, we pull back the induced metric $g_{2}$ on $\mathbb{S}^2_{2}$ of radius $2$ by the map $\Phi(x)=2x$ taking $\mathbb{S}^2_{1}\to \mathbb{S}^2_{2}$ to get the isometry $(\mathbb{S}^2_{1},\Phi^*g_{2})\simeq(\mathbb{S}^2_{2},g_{2})$. Then note that for $X,Y\in T\mathbb{S}^2_{1}$ we have $\Phi^*g_{2}(X,Y)=g_{\Reals^3}(d\Phi(X),d\Phi( Y))=4g_{\Reals^3}(X,Y)=4g_{1}$. Thus by the covariance in Equation (\ref{ConfCovar}), the spectrum of the operator $L$ is the same as that of $\Delta_{\mathbb{S}^2_1}+4$ on the sphere of radius 1.

Now, since the eigenvalues of $\Delta$ on the unit sphere $\mathbb{S}^2=\mathbb{S}^2_1$ are
\[
\lambda_k=-k(k+1),
\]
we see that $\Delta_{\mathbb{S}^2_1}+4$ is invertible on the sphere. The eigenvalues for the Dirichlet problem for $\Delta$ on the hemispheres are the same, but with smaller multiplicity (and in particular $0$ is not an eigenvalue). Thus $\Delta_{\mathbb{S}^2_1}+4$ is also invertible there.

Considering the plane $\{z= 0\}$, one gets similarly $A=0$, and
\beq
\Ric^{h}(\vec{\nu},\vec{\nu})=e^{-2\omega}\Big(1-\frac{|x|^2}{16}\Big).
\eeq

Recall that for the Dirichlet problem for the harmonic oscillator on the unit disk, we have that $\lambda_k=-k^2$, where $k=1,2,3,\ldots$ are the integers. Thus $\lambda_k=-\frac{k^2}{2}$ on the disk of radius $\sqrt{2}$, while $\lambda_k=-\frac{k^2}{4}$ on the disk of radius 2. Thus in either case the corresponding stability operator $L$ is invertible.
\end{proof}

\section*{Appendix D: Laplacians on flat cylinders}

We here recall a simple analytical result on $(\Omega,g_0)$ the flat cylinder $\Omega=H^{+}_{\leq l}/G$ equipped with the standard metric $g_0=ds^2 +dz^2$, where $G$ is the group generated by $(s,z) \to (s,z+2\pi)$, and $l \in (10, \infty)$ is called the length of the cylinder. We have $\partial \Omega=\partial_0 \cup \partial _l$ where $\partial_0$ and $\partial_l$ are the boundary circles $\{s=0\}$ and $\{s=l\}$ respectively. 

Let $\mathcal{L}$ on the flat cylinder $(\Omega, g_0)$ be given by
 \beq
	\mathcal{L} v = \Delta_{\chi} v + \mathbf{A} \cdot \nabla v+ B\cdot v,
 \eeq
where $\chi$ is a $C^2$ Riemannian metric, $\mathbf{A}\in C^1(\Omega, \Reals^2)$ is a vector field, and $B\in C^1(\Omega)$. We define 
	\begin{equation*}
	N(\mathcal{L}) := 
	\left\{ \| \chi -g_0:C^2(\Omega,g_0) \| + \| \mathbf{A} :C^1(\Omega,g_0 )\|  + \| B :C^1(\Omega,g_0 )\| \right\} 
	\end{equation*}

\begin{proposition} \label{laplacian_on_cylinders}
Given $\gamma \in (0,1)$ and $\ep>0$, if $N(\mathcal{L})$ is small enough in terms of  $\alpha$,  $\gamma$ and $\ep$, then
there is a bounded linear map 
	\[
	\underline{\mathcal{R}}: C^{2,\alpha}(\partial_0,g_0) \times C^{0, \alpha}(\Omega, g_{0}, e^{-\gamma s}) \to C^{2, \alpha}(\Omega, g_{0}, e^{-\gamma s})
	\]
such that for $(f,E)$ in the domain of $\underline{\mathcal R}$ and $v = \underline{\mathcal R}(f,E)$, the following properties are true, where the constants $C$ depend only on $\alpha$ and $\gamma$:
	\begin{enumerate}
	\item   $\mathcal{L} v = E$ on $\Omega$.
	\item  $v= f -\textrm{\em avg}_{\partial_0} f +B(f,E)$ on $\partial_0$, where $B(f,E)$ is a constant on $\partial_0$ and $\textrm{\em avg}_{\partial_0} f$ denotes the average of $f$ over $\partial_0$. 
	\item  $v\equiv 0$ on $\partial_l$.
	\item $
	\| v :C^{2, \alpha}(\Omega, g_{0}, e^{-\gamma s}) \| $\\
	$\leq C \| f-\textrm{\em avg}_{\partial_0} f:C^{2, \alpha}(\partial_0, g_{0})\|+ C \| E: C^{0, \alpha}(\Omega, g_{0}, e^{-\gamma s})\|.
	$
	\item $|B(f,E)|\leq \ep\| f-\text{\em avg}_{\partial_0} f: C^{2, \alpha}(\partial_0, g_{0})\|+ C \| E: C^{0, \alpha}(\Omega, g_{0}, e^{-\gamma s})\|.
	$
	\item If  $E$ vanishes, then 
	\[
	\| v : C^0(\Omega) \| \leq 2 \| v:C^0(\partial_0)\|.
	\]
	\end{enumerate}
Moreover, the function $v$ depends continuously on  $\mathcal{L}$. 
\end{proposition}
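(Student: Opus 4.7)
The plan is to first establish the statement in the model case $\mathcal{L}_0 := \Delta_{g_0}$ via Fourier expansion in the $z$-variable, identifying the single scalar obstruction to $e^{-\gamma s}$-decay, and then obtain the statement for general $\mathcal{L}$ with $N(\mathcal{L}) \ll 1$ by a Neumann-series correction around the model inverse. Throughout, the hypothesis $\gamma \in (0,1)$ is crucial because $1$ is the spectral gap for the transverse Laplacian on the circle of length $2\pi$.

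\textbf{Model inverse.} Decompose $v(s,z) = \bar v(s) + \tilde v(s,z)$ with $\tilde v$ of zero mean in $z$, and likewise $E = \bar E + \tilde E$, $f = \mathrm{avg}_{\partial_0}f + \tilde f$. Each Fourier mode $\tilde v_n$ ($n \geq 1$) satisfies a second-order ODE $\tilde v_n'' - n^2 \tilde v_n = \tilde E_n$ on $[0,l]$ with Dirichlet data $\tilde v_n(0) = \tilde f_n$, $\tilde v_n(l) = 0$. Mode-by-mode solution using the fundamental pair $e^{\pm ns}$, together with standard cylindrical $C^{2,\alpha}$-estimates and summation in $n$, yields $\tilde v$ satisfying
\[
\|\tilde v : C^{2,\alpha}(\Omega,g_0,e^{-\gamma s})\| \leq C\bigl(\|\tilde f : C^{2,\alpha}(\partial_0,g_0)\| + \|\tilde E : C^{0,\alpha}(\Omega,g_0,e^{-\gamma s})\|\bigr),
\]
uniformly in $l \in (10,\infty)$ since $\gamma < 1$. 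The zero mode $\bar v$ satisfies the one-dimensional equation $\bar v'' = \bar E$ with $\bar v(l)=0$; its general solution is the particular solution $\bar v_p(s) := -\int_s^l (t-s) \bar E(t)\,dt$ plus a homogeneous linear-in-$s$ piece. Since any nontrivial affine function violates the weighted decay, that piece must vanish, which dictates $\bar v(0) = \bar v_p(0)$. Consequently the admissible boundary value at $\partial_0$ is not $f$ itself but $\tilde f + B_0(E)$, where $B_0(E) := \bar v_p(0)$ depends only on $E$ in the model case and satisfies $|B_0(E)| \leq C \|E : C^{0,\alpha}(\Omega,g_0,e^{-\gamma s})\|$ from the integrability of $t\,e^{-\gamma t}$. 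This defines the model solution operator $\underline{\mathcal R}_0$.

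\textbf{Perturbation.} Write $\mathcal{L} = \mathcal{L}_0 + P$ with $P := (\Delta_\chi - \Delta_{g_0}) + \mathbf{A}\cdot\nabla + B\cdot$, and note that the smallness of $N(\mathcal{L})$ bounds the operator norm of $P$ between the weighted spaces by $C\,N(\mathcal{L})$. Starting from $v_0 := \underline{\mathcal R}_0(f,E)$, iterate $v_{k+1} := v_k + \underline{\mathcal R}_0(0, -Pv_k)$; the map $u \mapsto \underline{\mathcal R}_0(0,-Pu)$ is a contraction on $C^{2,\alpha}(\Omega,g_0,e^{-\gamma s})$ for $N(\mathcal{L})$ small enough, so the iteration converges geometrically to $v$ with $\mathcal{L}v = E$ on $\Omega$, $v = \tilde f + B(f,E)$ on $\partial_0$, and $v = 0$ on $\partial_l$, where $B(f,E) := B_0(E) + \sum_{k \geq 0} B_0(-Pv_k)$. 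The $E$-contribution to $B(f,E)$ is bounded by $C\|E\|$, while the $f$-contribution exists only because of the perturbation (in the model case $B_0(f,0) = 0$) and therefore carries at least one factor of $C\,N(\mathcal{L})$, giving the $\varepsilon$ in item (5) upon choosing $N(\mathcal{L})$ sufficiently small.

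\textbf{Item (6) and continuity.} When $E \equiv 0$, the function $v$ solves the homogeneous equation $\mathcal{L} v = 0$ with $v|_{\partial_l} = 0$, so a weak maximum principle applies: for $\mathcal{L}_0$ alone one obtains $\|v\|_{C^0(\Omega)} \leq \|v\|_{C^0(\partial_0)}$ directly, and a barrier of the form $w(s) := (1 + \eta(l-s))\|v\|_{C^0(\partial_0)}$ with $\eta$ small absorbs the first- and zeroth-order perturbations in $P$, yielding the factor $2$ once $N(\mathcal{L})$ is small. Continuous dependence of $\underline{\mathcal R}$ on the coefficients of $\mathcal{L}$ is automatic from the contraction-mapping construction. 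The main delicate point is the uniformity of the Fourier estimate in Step~1 with respect to $l$, which is exactly what the restriction $\gamma < 1$ secures; once that estimate is in place, the remainder of the argument is a standard perturbative closure.
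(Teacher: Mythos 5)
Your overall strategy is the same as the paper's: split off the radial average (zero Fourier mode in $z$), solve the non-zero-mode part with Dirichlet data and $l$-uniform weighted estimates (this is where $\gamma<1$ enters), integrate the zero mode directly with vanishing data at $\partial_l$ so that a constant $B(f,E)$ must be conceded at $\partial_0$, and then absorb $\mathcal{L}-\Delta_{g_0}$ by iteration using the smallness of $N(\mathcal{L})$. Your identification of the zero mode as the sole obstruction, with $\bar v_p(0)=\int_0^l(t)\,\bar E$-type bound $|B_0(E)|\le C\|E\|$, and your observation that the $f$-dependence of $B$ only enters through the perturbation (hence carries a factor $N(\mathcal{L})$, giving item (5)) both match the paper. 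Two cosmetic points: your $\bar v_p$ satisfies $\bar v_p''=-\bar E$ as written (drop the minus sign; it is then exactly the paper's $u_0(s)=\int_s^l\int_{s'}^l e_0$), and on a finite cylinder an affine function does not literally violate membership in $C^{2,\alpha}(\Omega,g_0,e^{-\gamma s})$ --- the correct statement is that its weighted norm grows like $e^{\gamma l}$, destroying uniformity in $l$, which is why one cannot prescribe $\bar v(0)$.

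There is, however, one step that fails as written: the iteration $v_{k+1}:=v_k+\underline{\mathcal R}_0(0,-Pv_k)$. With $T u:=\underline{\mathcal R}_0(0,-Pu)$ this reads $v_k=(I+T)^k v_0$, which does not converge for small $\|T\|$ (take $T=tI$), and any limit would have to satisfy $Tv_\infty=0$, i.e.\ $Pv_\infty=0$, rather than $\mathcal{L}v_\infty=E$. Your formula is correct only at the first step, where the residual $E-\mathcal{L}v_0$ happens to equal $-Pv_0$. The correct scheme is either the residual-correction iteration $v_{k+1}=v_k+\underline{\mathcal R}_0(0,E-\mathcal{L}v_k)$, or equivalently the fixed point of the affine contraction $u\mapsto \underline{\mathcal R}_0(f,E)+Tu$, or the Neumann series $v=\sum_{k\ge 0}T^k\,\underline{\mathcal R}_0(f,E)$; this last form is exactly the paper's alternating sum $\sum(-1)^i(U_i+u_i)$, where each new term is the model inverse applied to $(\mathcal{L}-\Delta_{g_0})$ of the \emph{previous correction}, not of the cumulative sum. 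Once this is repaired the argument closes. (Separately, for item (6) your affine barrier $w=(1+\eta(l-s))\|v\|_{C^0(\partial_0)}$ does not obviously dominate the zeroth-order perturbation $Bw$ uniformly in $l$, since $N(\mathcal{L})$ is not allowed to depend on $l$; a decaying barrier, or a direct appeal to the already-established weighted estimate (4), is safer.)
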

\begin{proof}
For a metric $\chi = (\chi_{ij})$ in local coordinates, we can write the Laplace operator $\Delta_\chi$ as
\begin{equation} \label{chi_laplacian}
\Delta_\chi = \chi^{ij} \partial_{ij} + \chi^{ij}_{,i} \partial_j + \chi^{k j} \Gamma_{ik}^i \partial_j
\end{equation}
where $\chi^{-1} = (\chi^{ij})$ is the inverse matrix for $(\chi_{ij})$, and where 
\begin{equation}
\Gamma_{ij}^k = \frac{1}{2} \chi^{lk}\left(\chi_{li, j} + \chi_{jl, i} - \chi_{ij, l} \right)
\end{equation}
 are the Christoffel symbols for the Riemannian connection for $\chi$. By (\ref{chi_laplacian}) we have that
 \begin{equation} \notag
 \|\mathcal{L}- \Delta_{g_0} \| \leq C \mathcal{N}(\mathcal{L})
 \end{equation}
 where $\| \mathcal{L} - \Delta_{g_0} \|$ denotes the operator norm of $\mathcal{L} - \Delta_{g_0}$ as a map from $C^{2, \alpha}(\Omega, g_{0}, e^{-\gamma s})$ to $C^{0, \alpha}(\Omega, g_{0}, e^{-\gamma s})$. Thus by taking $\mathcal{N} (\mathcal{L})$ sufficiently small we can arrange so that 
 \begin{equation}
 \| \mathcal{L} - \Delta_{g_0}\| < \delta
 \end{equation}
 for any $\delta > 0$. Despite the presence of small $L^2$ eigenvalues for the flat laplacian $\Delta_{g_0}$ on a long cylinder we can still define a uniformly bounded inverse as follows: Given a function $E \in C^{0, \alpha}(\Omega, g_{0}, e^{-\gamma s})$, write $E(s, z) = E_0(s, z)+ e_0(z)$, with $e_0 (z) = \frac{1}{2 \pi }\int_{ \sigma = z} E(s, \sigma) ds$  the radial average of  $E$. Then for any function $f \in C^{2, \alpha}(\partial_0)$ we can solve
 \begin{eqnarray} \notag
 \Delta_{g_0} U_0  & = & E_0  \\ \notag
 U_0 & = & f - \text{avg}_{\partial_0} f \text{ on } \partial_0  \\ \notag
 U_0 & = & 0 \text{ on } \partial_l 
 \end{eqnarray}
 with 
 \begin{equation} \notag
 \|U_0:  C^{2, \alpha}(\Omega, g_{0}, e^{-\gamma s}) \| \leq C \|E_0 :  C^{0, \alpha}(\Omega, g_{0}, e^{-\gamma s}) \| +C \| f - \text{avg}_{\partial_0} f: C^{2, \alpha}(\partial_0)\|.
 \end{equation}
 The radial part which projects onto the small eigenvalues is then directly integrated by setting  $u_0(z) = \int_z^l \int_s^l e_0(t) dt ds$. We then have
 \begin{eqnarray} \notag
 \mathcal{L} (U_0 + u_0) & = & E + (\mathcal{L} - \Delta_{g_0}) (U_0 + u_0) := E + E_1 \\ \notag
 U_0 + u_0 & = & c_0 \text{ on } \partial_0 \\ \notag
 U_0 + u_0 & = & 0 \text{ on } \partial_l \notag
 \end{eqnarray}
 where $E_1$ is defined by the  equality above and satisfies 
 \begin{eqnarray} \notag
 \|E_1:  C^{0, \alpha}(\Omega, g_{0}, e^{-\gamma s}) \|  &\leq &  \delta \|U_0 + u_0:  C^{2, \alpha}(\Omega, g_{0}, e^{-\gamma s}) \| \\ \notag
 & \leq & \delta (C_0 + 1) \|E:  C^{0, \alpha}(\Omega, g_{0}, e^{-\gamma s}) \| \notag
 \end{eqnarray}
 where $C_0$ denotes the operator norm of $\Delta_{g_0}^{-1}$ in the space of $L^2$ functions with zero radial average.  The process is then iterated to obtain a sequence $\{(U_k, u_k) \}_{k = 1}^{\infty}$ satisfying
 \begin{eqnarray} \notag
 \Delta_{g_0} U_k & = & (\mathcal{L} - \Delta_{g_0})(U_{k - 1 } + u_{k - 1}) - e_k, \\ \notag
 U_k  & = & 0 \text{ on } \partial \Omega, \notag
 \end{eqnarray}
with
 \begin{equation} \notag
 e_k(z) = \int_{\sigma = z}  (\mathcal{L} - \Delta_{g_0})(U_{k - 1 } + u_{k + 1})(s, \sigma) ds
 \end{equation}
 and
 \begin{equation}
 u_k =   \int_z^l \int_s^l e_k(t) dt ds
 \end{equation}
 Choosing $\delta$ so that $\delta C_0 = \epsilon' < 1$, we than have that
 \begin{equation}
 \|U_k:  C^{2, \alpha}(\Omega, g_{0}, e^{-\gamma s})  \|, \| u_k:  C^{2, \alpha}(\Omega, g_{0}, e^{-\gamma s})\| < \epsilon'^k \|E:  C^{0, \alpha}(\Omega, g_{0}, e^{-\gamma s}) \|
 \end{equation}
 The alternating partial sums $v_k = \Sigma_{i = 0}^k (-1)^i (U_i + u_i)$ then converge to a function $v$ satisfying $(1) - (6)$ above. The continuous dependence on  $\mathcal{L}$ follows directly by construction.
 
 \end{proof}
\begin{remark}
The reader will note that a similar proposition was first recorded in  \cite{Ka97} and \cite{Ka95}, and is a fundamental part of the linear theory in both these articles. The proposition recorded here differs from the previous versions in that we allow a much broader class of perturbations at the expense of a uniqueness claim.
\end{remark}
\bibliographystyle{amsalpha}

\end{document}